\newcounter{reminder}
\DeclareMathOperator{\GCD}{GCD}
\DeclareMathOperator{\Aut}{Aut}
\DeclareMathOperator{\Norm}{Norm}
\DeclareMathOperator{\Sym}{Sym}
\DeclareMathOperator{\I}{Im}
\DeclareMathOperator{\R}{Re}
\theoremstyle{theorem}
\newtheorem{theorem}{Theorem}[section]
\newtheorem{proposition}[theorem]{Proposition}
\newtheorem*{MainTheorem}{Main Theorem}
\theoremstyle{definition}
\theoremstyle{remark}
\newtheorem*{remark}{\textsc{Remark}}
\newcommand{\calM}{\mathcal{M}}
\newcommand{\calN}{\mathcal{N}}
\title{Archimedean toroidal maps and their minimal almost regular covers}
\author{Kostiantyn Drach \and Yurii Haidamaka \and Mark Mixer \and Maksym Skoryk}
\begin{document}

\maketitle

\begin{abstract}
The automorphism group of a map acts naturally on its flags (triples of incident vertices, edges, and faces).  An Archimedean map on the torus is called almost regular if it has as few flag orbits as possible for its type; for example, a map of type $(4.8^2)$ is called almost regular if it has exactly three flag orbits.  Given a map of a certain type, we will consider other more symmetric maps that cover it.   In this paper, we prove that each Archimedean toroidal map has a unique minimal almost regular cover. By using the Gaussian and Eisenstein integers, along with previous results regarding equivelar maps on the torus, we construct these minimal almost regular covers explicitly.
\end{abstract}

\section{Introduction}

Throughout the last few decades there have been many results about polytopes and maps that are highly symmetric, but that are not necessarily regular.  In particular, there has been recent interest in the study of discrete objects using combinatorial, geometric, and algebraic approaches, with the topic of symmetries of maps receiving a lot of interest.  

There is a great history of work surrounding maps on the Euclidean plane or on the 2-dimensional torus.   When working with discrete symmetric structures on a torus, many of the ideas follow the concepts introduced by Coxeter and Moser in~\cite{Coxeter_Conf_maps, CM}, where they present a classification of rotary (regular and chiral) maps on the torus. 
Such ``toroidal'' maps can be seen as quotients of regular tessellations of the Euclidean plane.   More recently, Brehm and K\"uhnel~\cite{BrK} classified the equivelar maps on the two dimensional torus.  Several more results have appeared about highly symmetric maps (see~\cite{CD, TU} for example), and about highly symmetric tessellations of tori in larger dimensions (see~\cite{McMull,MSHigher}).  

There is also much interest in finding minimal regular covers of different families of maps and polytopes (see for example \cite{Hartley_Min_cov, Pellicer2, Pellicer1}). In a previous paper~\cite{DMEquivelar}, two of the authors constructed the minimal rotary cover of any equivelar toroidal map. Here we extend this idea to toroidal maps that are no longer equivelar, and construct minimal toroidal covers of the Archimedean toroidal maps with maximal symmetry.    We call these covers {\em almost regular}; they will no longer be regular (or chiral), but instead will have the same number of flag orbits as their associated tessellation of the Euclidean plane.

 Our main results can be summarized by the following theorem.  

\begin{MainTheorem}
Each Archimedean map on the torus has a unique almost regular cover on the torus, which can be constructed explicitly.
\end{MainTheorem}

The paper is organized as follows. Section~\ref{Sec:Prelim} contains the necessary background on maps and their symmetries, including the definition of an almost regular Archimedean map. In Section~\ref{Sec:AlmostReg}, almost regular Archimedean toroidal maps are characterized in terms of their lifts to the planar tessellations and the translation subgroups generating respective quotients. Section~\ref{Sec:MinCov} contains our main results (Theorems~\ref{Thm:Areg}--~\ref{Thm:33344}) regarding the relationship between maps and their minimal almost regular covers.

\section{Preliminares}
\label{Sec:Prelim}

In this section we provide definitions and results necessary for our main theorems; many of these ideas, as well as further details, can be found in~\cite{BrSch, McMull, Sir}.

A finite graph $ X$ embedded in a compact 2-dimensional manifold $S$ such that every connected component of $S\backslash X$ (which is called a \emph{face}) is homeomorphic to an open disc is called a \emph{map} $\mathcal M$ (on the surface $S$).

In this paper, we consider Archimedean maps on a flat 2-dimensional torus, which we call \textit{Archimedean toroidal maps}.   A map $\mathcal{M}$ on the torus is \textit{Archimedean} if the faces of $\mathcal{M}$ are regular polygons (in the canonical metric on $S$) and every two vertices of $\mathcal M$ can be mapped into each other by an automorphism of $S$.
 A map $\calM$ is equivelar of (Schl{\"a}fli) type $\{p,q\}$ if all of its vertices are $q$-valent, and all of its faces are regular $p$-gons.  If the map is Archimedean, it can be described, as in~\cite{Grunbaum}, by the arrangement of polygons around a vertex; where a map of type $(p_1.p_2\ldots p_k)$ has $k$ polygons (a $p_1$-gon, $p_2$-gon, \ldots , and a $p_k$-gon) incident to each vertex. A particular vertex structure of a map is called a \emph{type}.

As we will see below, Archimedean toroidal maps arise naturally as quotients of tessellations of the Euclidean plane by regular polygons; these tessellations are called \emph{Archimedean}, and they are described in the same way as Archimedean maps. The following classical theorem gives a complete classification of planar Archimedean tessellations.

\begin{theorem}[Classification of Archimedean tessellations on the plane; \cite{Grunbaum}]
There are only 11 tessellations on the plane by regular polygons so that any vertex can be mapped to every other vertex by the symmetry of the tessellation. These are the following tessellations: 
\begin{equation*}
\begin{aligned}
\{3,6\},\,\{4,4\},\,\{6,3\},\,(4.8^2),\,&(3.12^2),\,(3.6.3.6),\,(3.4.6.4), \\
(4.6.12),\,(3^2.4.3.4),\,&(3^4.6),\,(3^3.4^2) \\
\end{aligned}
\end{equation*}
(see Figures~\ref{63}--\ref{33344}). \qed
\end{theorem}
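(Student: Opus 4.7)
The plan is to follow the classical proof, which proceeds in three stages: deriving a Diophantine constraint from the angle condition at a vertex, enumerating all candidate cyclic vertex stars compatible with that constraint, and finally eliminating those stars that cannot be extended to a global tessellation of the plane.

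At any vertex of a candidate tessellation, the cyclic sequence of regular polygons $(p_1,\dots,p_k)$ meeting there must have interior angles summing to $2\pi$. Since a regular $p$-gon has interior angle $\pi(p-2)/p$, this yields
\[
\sum_{i=1}^{k}\frac{1}{p_i} \;=\; \frac{k-2}{2}.
\]
Because each $p_i\geq 3$, the interior angles are at least $\pi/3$, forcing $3\leq k\leq 6$. I would solve this equation for each $k\in\{3,4,5,6\}$, obtaining a finite list of multisets of polygon sizes, and then, for each multiset, list the distinct cyclic arrangements up to dihedral symmetry. This produces roughly twenty candidate vertex types (for example, for $k=3$ the multisets are $(3,7,42)$, $(3,8,24)$, $(3,9,18)$, $(3,10,15)$, $(3,12,12)$, $(4,5,20)$, $(4,6,12)$, $(4,8,8)$, $(5,5,10)$, $(6,6,6)$, and several $k=4$ multisets admit two cyclic orderings each).

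The central and hardest step is ruling out the candidate types that satisfy the angle condition locally but cannot extend to a tiling. The key principle is a parity argument applied to the boundary of a single face: since every vertex has the same cyclic type $(p_1.p_2.\ldots.p_k)$, the polygons meeting a fixed face $P$ along its consecutive boundary edges are forced to form a specific periodic pattern determined by the positions of $P$ in the vertex star. When the number of sides of $P$ is incompatible with this pattern (typically when $P$ has an odd number of sides but its neighbours must alternate between two distinct polygon sizes), no consistent labelling exists. This argument eliminates $(3.7.42)$, $(3.8.24)$, $(3.9.18)$, $(3.10.15)$, $(4.5.20)$, and $(5.5.10)$ at once, by examining their odd-sided face. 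For the mixed $k=4$ candidates $(3.3.4.12)$, $(3.4.3.12)$, $(3.3.6.6)$, and $(3.4.4.6)$, a slightly more involved propagation---tracking the forced polygon types around a small face and checking consistency at a next-nearest vertex---yields a contradiction in each case. The main obstacle is organising this case analysis carefully, since each excluded candidate requires its own small local argument.

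Finally, I would verify that each of the $11$ surviving types $\{3,6\}$, $\{4,4\}$, $\{6,3\}$, $(4.8^2)$, $(3.12^2)$, $(3.6.3.6)$, $(3.4.6.4)$, $(4.6.12)$, $(3^2.4.3.4)$, $(3^4.6)$, $(3^3.4^2)$ is realised by an explicit periodic tiling, as depicted in Figures~\ref{63}--\ref{33344}. Combined with the enumeration and elimination above, this completes the classification.
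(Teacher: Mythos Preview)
The paper does not prove this theorem: it is stated with a citation to Gr\"unbaum--Shephard and marked with a \qedsymbol, so there is no argument in the paper to compare against. Your sketch is the standard classical proof one finds in that reference---the Diophantine angle condition, the bound $3\le k\le 6$, the enumeration of candidate vertex stars, the parity/propagation arguments around odd faces to exclude the non-extendable types, and explicit realisations of the surviving eleven---and it is essentially correct as an outline. One small point worth tightening: the theorem as stated requires vertex-\emph{transitivity} under the symmetry group, not merely that all vertices have the same cyclic type; your argument classifies tilings with constant vertex star, so you should note at the end that each of the eleven surviving tilings is in fact vertex-transitive (and, conversely, that vertex-transitivity forces constant vertex type), which is immediate from the explicit periodic constructions.
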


\begin{figure}[h]
\begin{multicols}{3}
\begin{center}
\includegraphics[width=30mm]{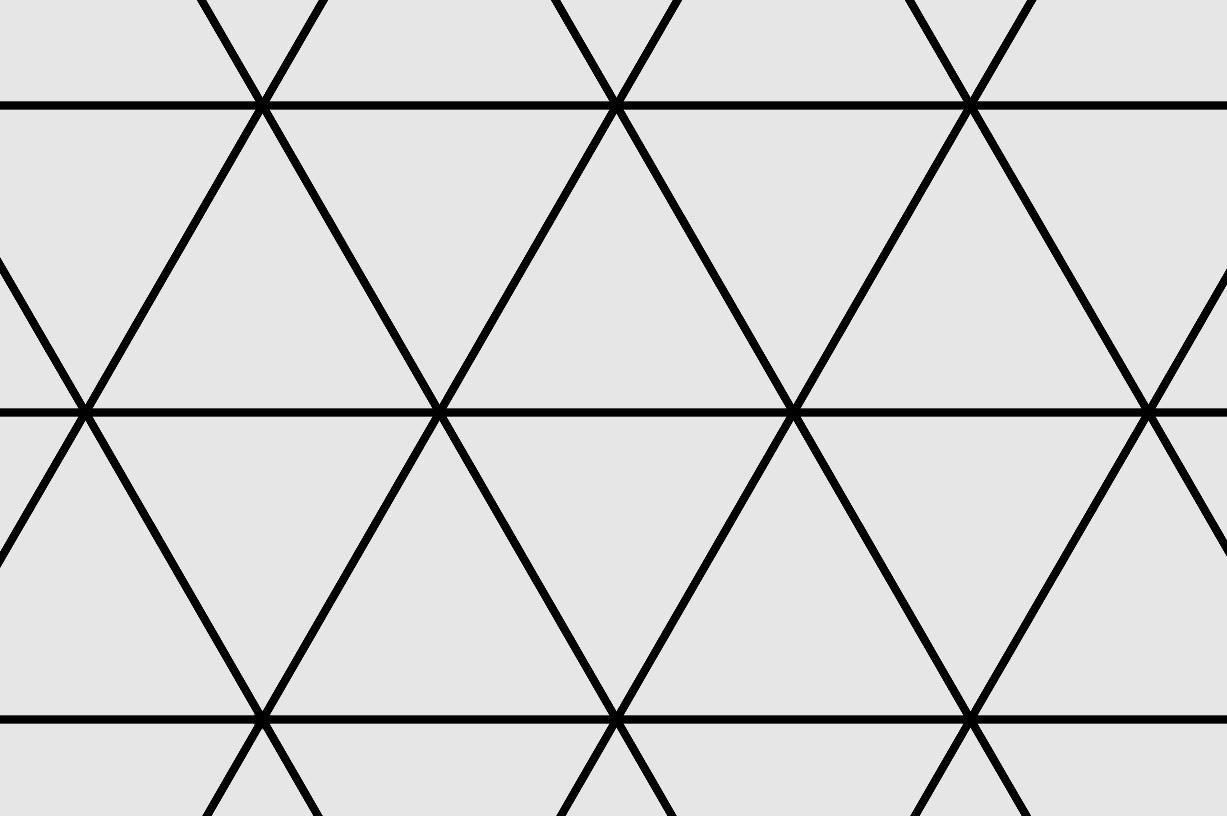}
\caption{$\{3,6\}$}
\label{63}
\end{center}
\begin{center}
\includegraphics[width=30mm]{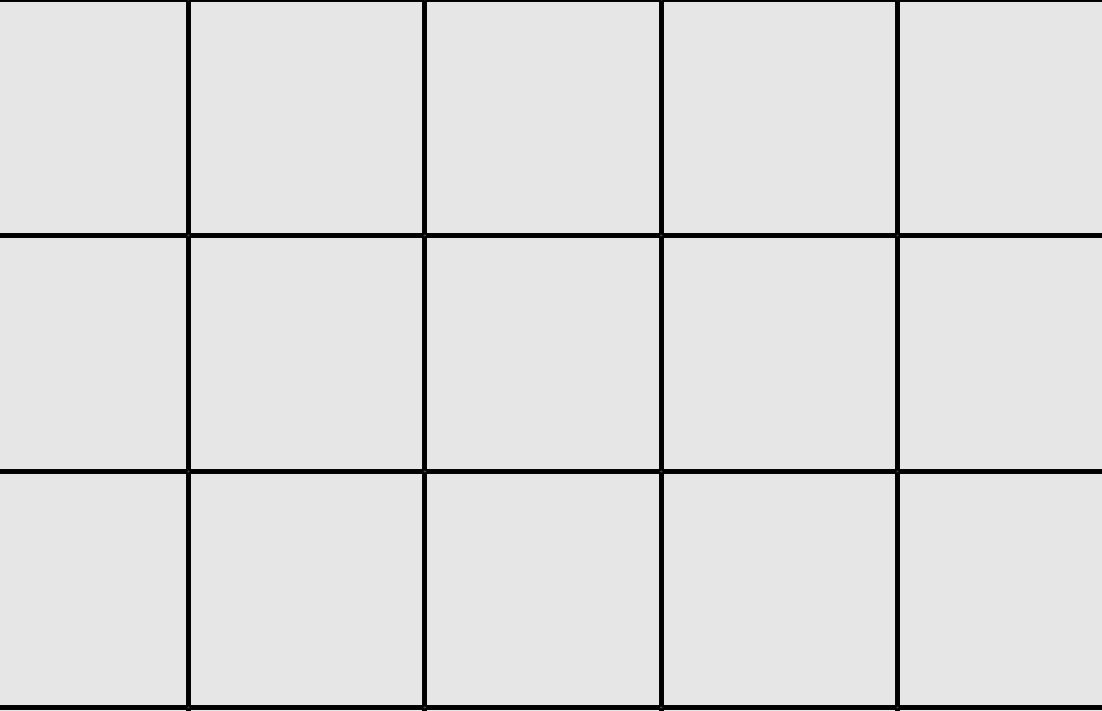}
\caption{$\{4,4\}$}
\label{figRight}
\label{44}
\end{center}
\begin{center}
\includegraphics[width=30mm]{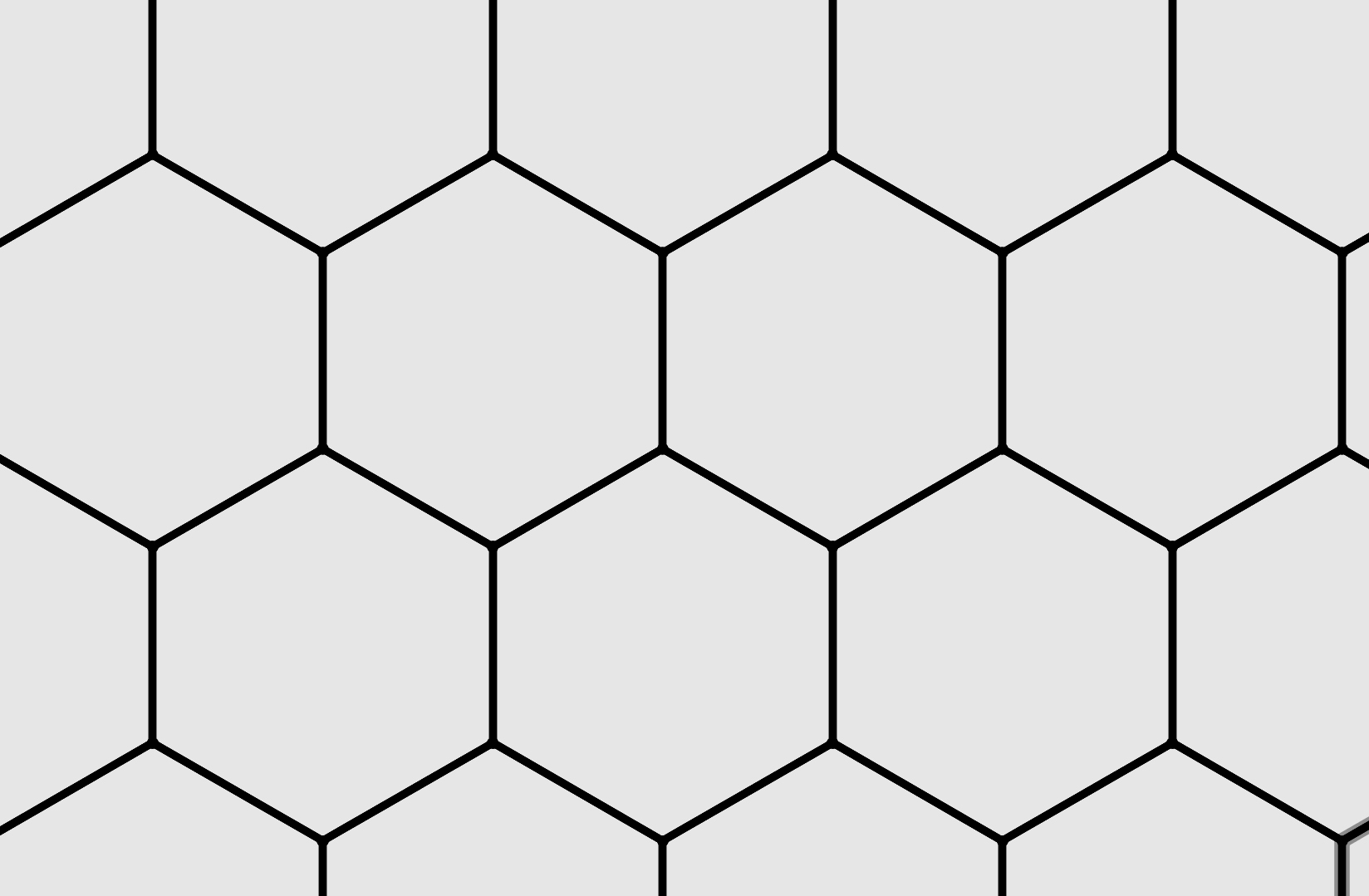}
\caption{$\{6,3\}$}
\label{36}		
\end{center}
\end{multicols}
\end{figure}

\begin{figure}[h!]
\begin{multicols}{3}
\begin{center}
\includegraphics[width=30mm]{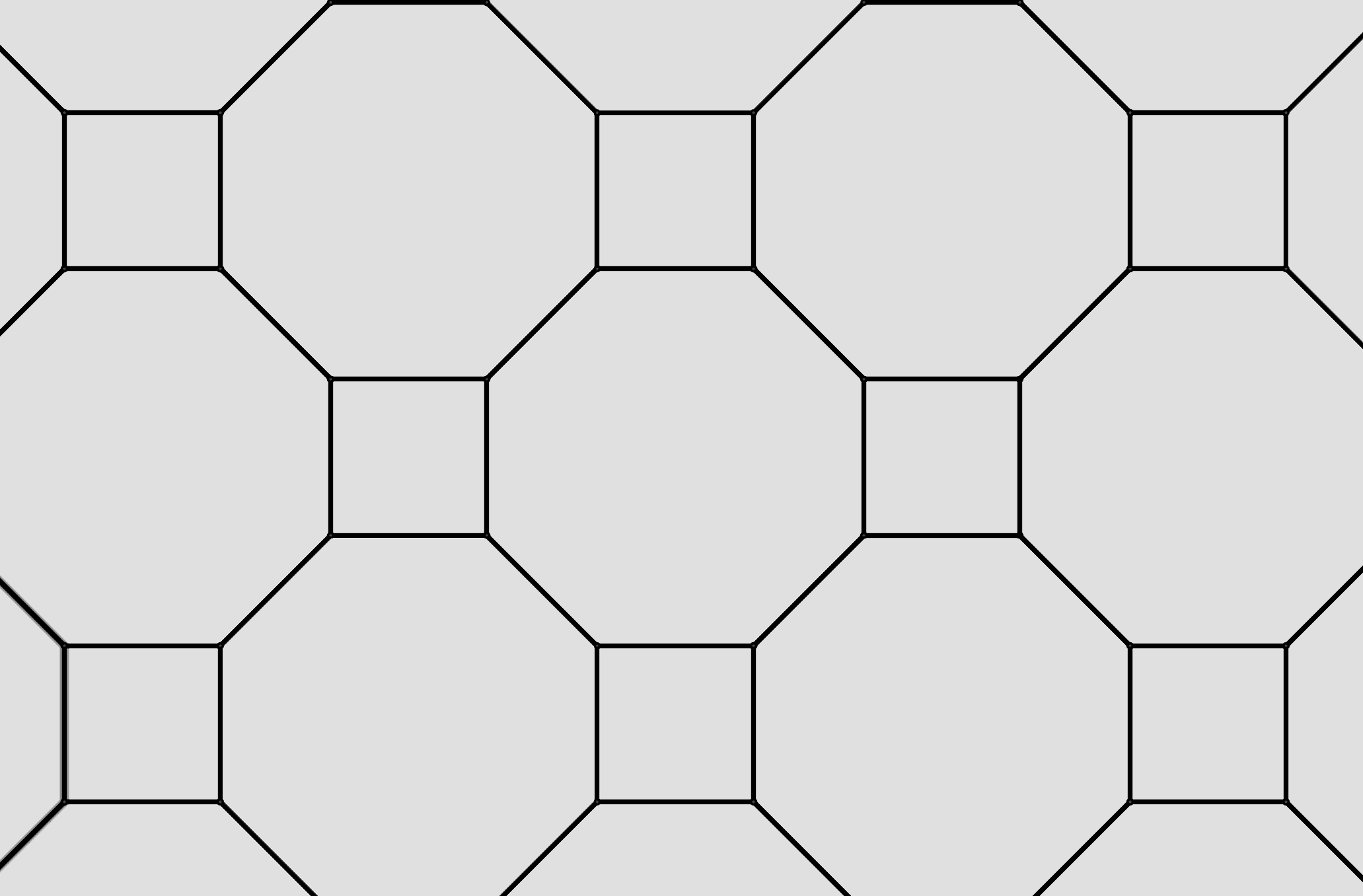}
\caption{$(4.8^2)$}
\label{488}
\end{center}
\begin{center}		
\includegraphics[width=30mm]{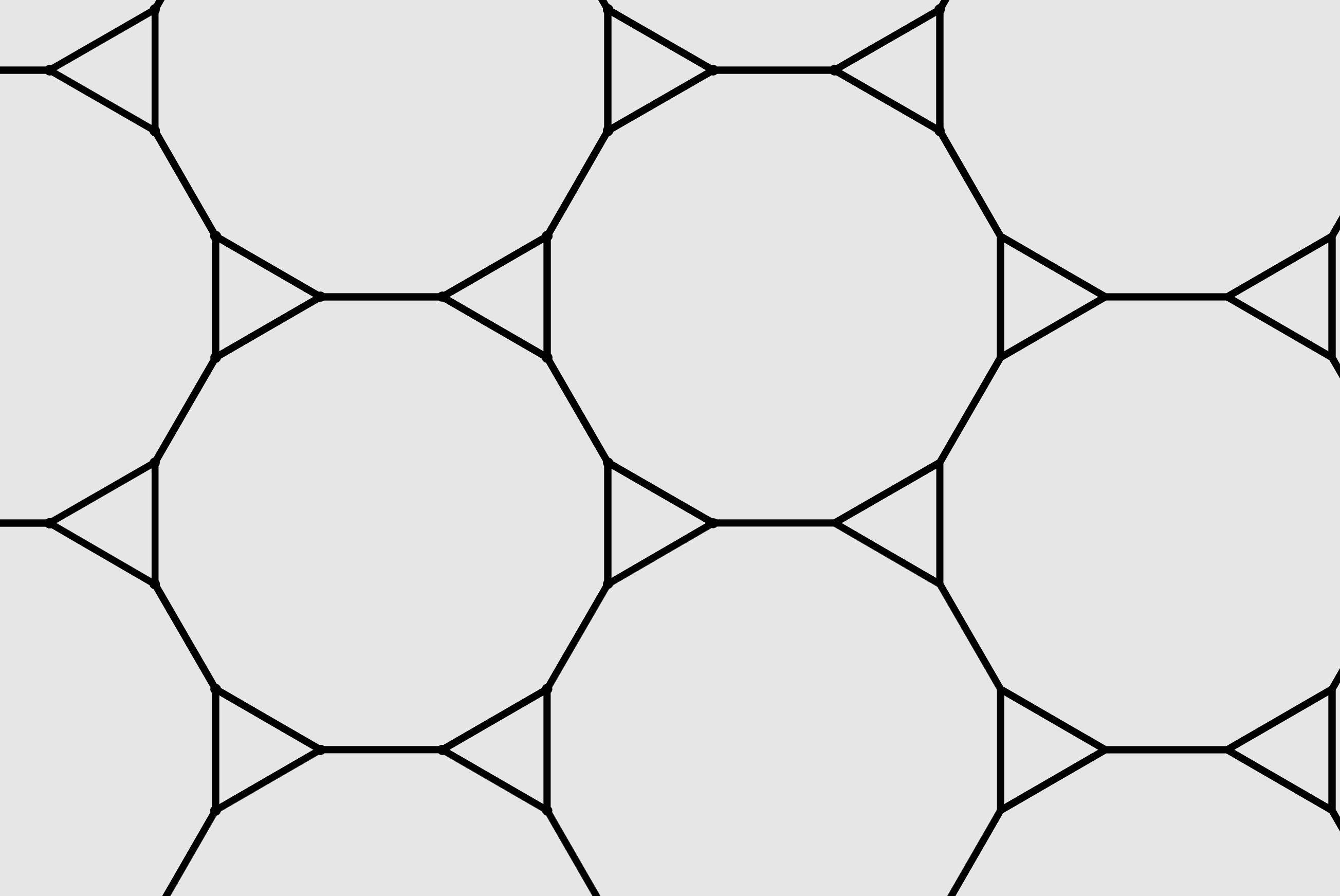}
\caption{$(3.12^2)$}
\label{31212}
\end{center}
\begin{center}
\includegraphics[width=30mm]{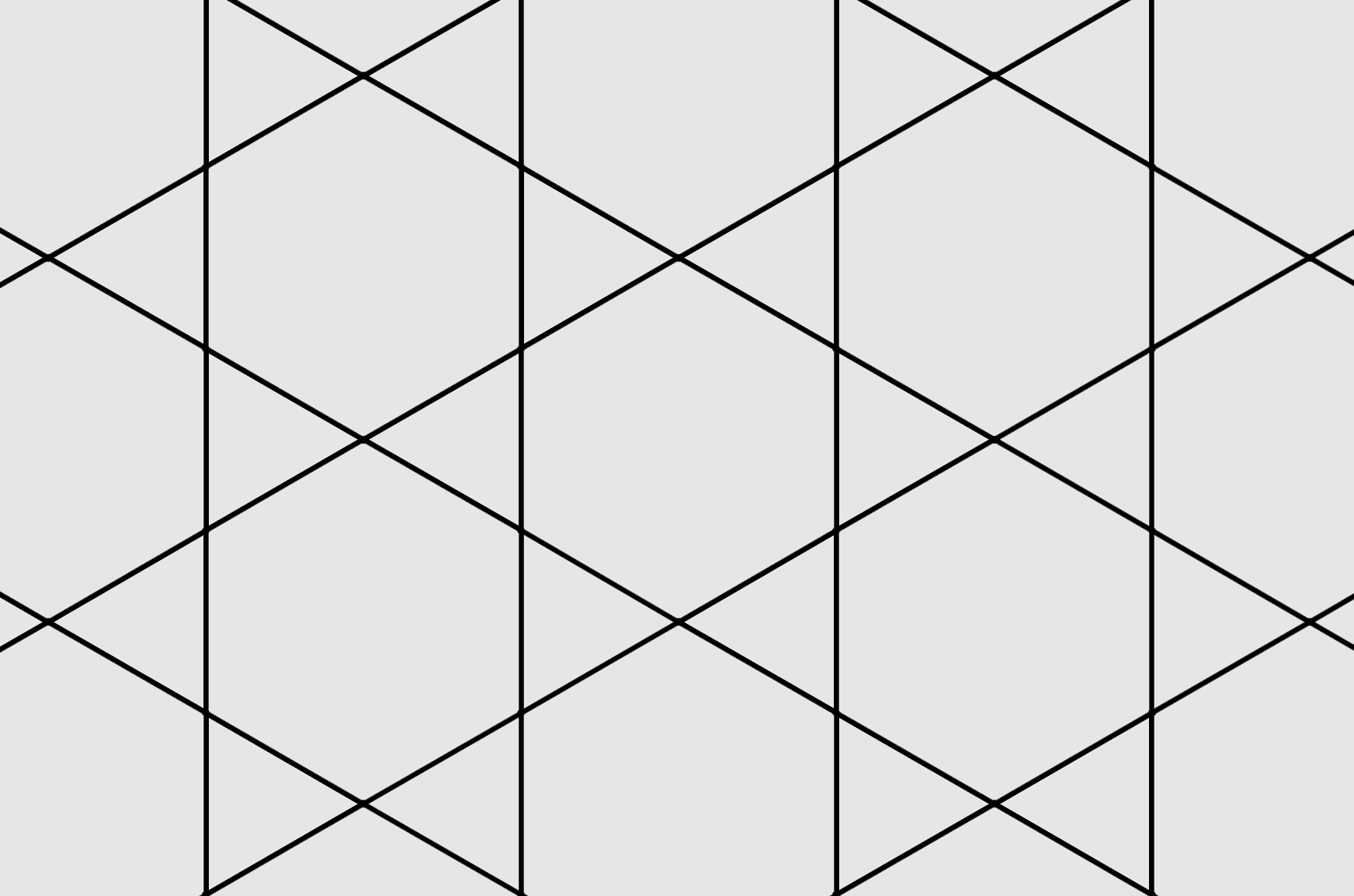}
\caption{$(3.6.3.6)$}
\label{3636}
\end{center}
\end{multicols}
\end{figure}

\begin{figure}[h!]
\begin{multicols}{3}
\begin{center}		
\includegraphics[width=30mm]{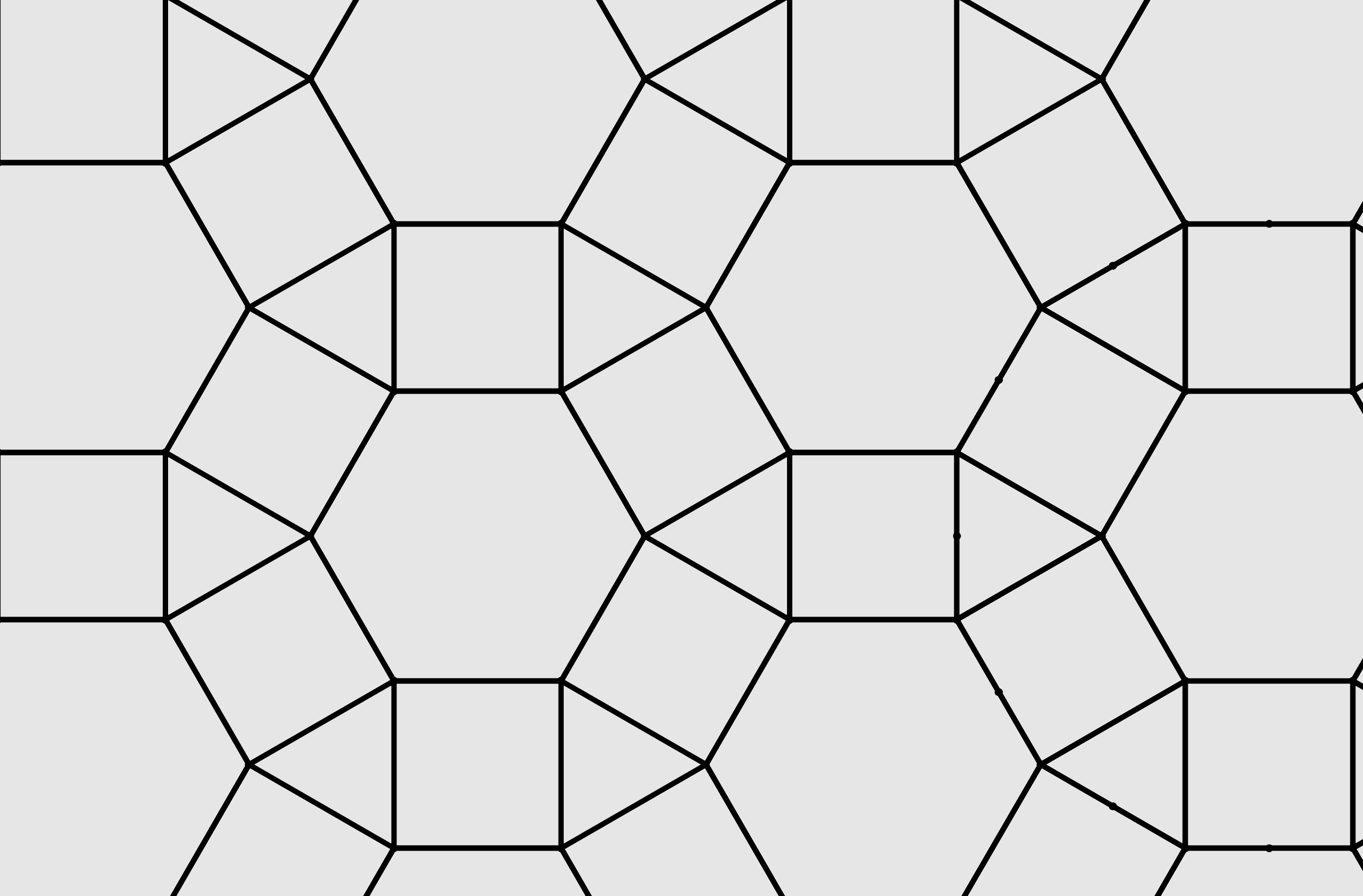}
\caption{$(3.4.6.4)$}
\label{3464}
\end{center}
\begin{center}		
\includegraphics[width=30mm]{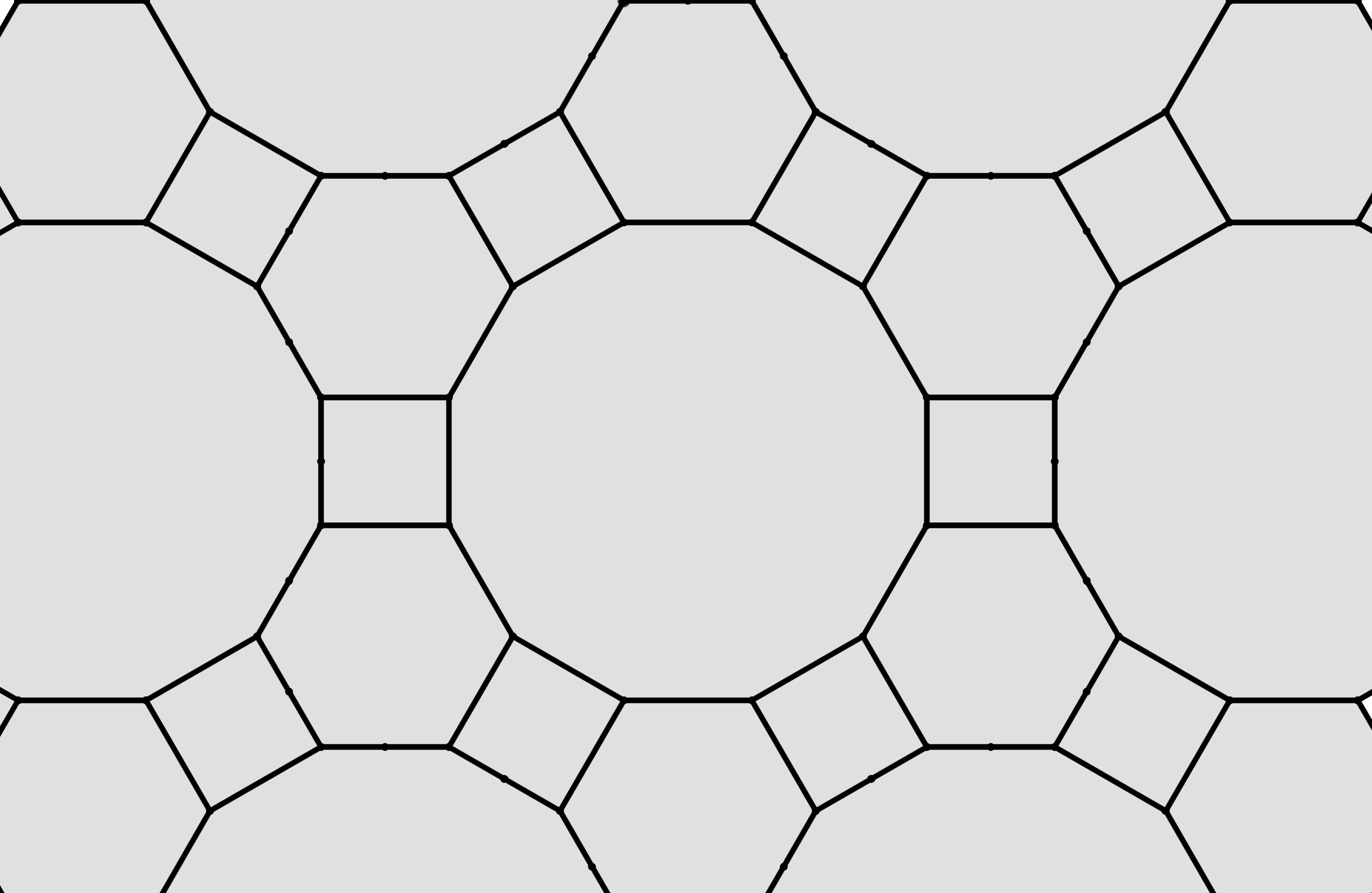}
\caption{$(4.6.12)$}
\label{4612}
\end{center}
\begin{center}		
\includegraphics[width=30mm]{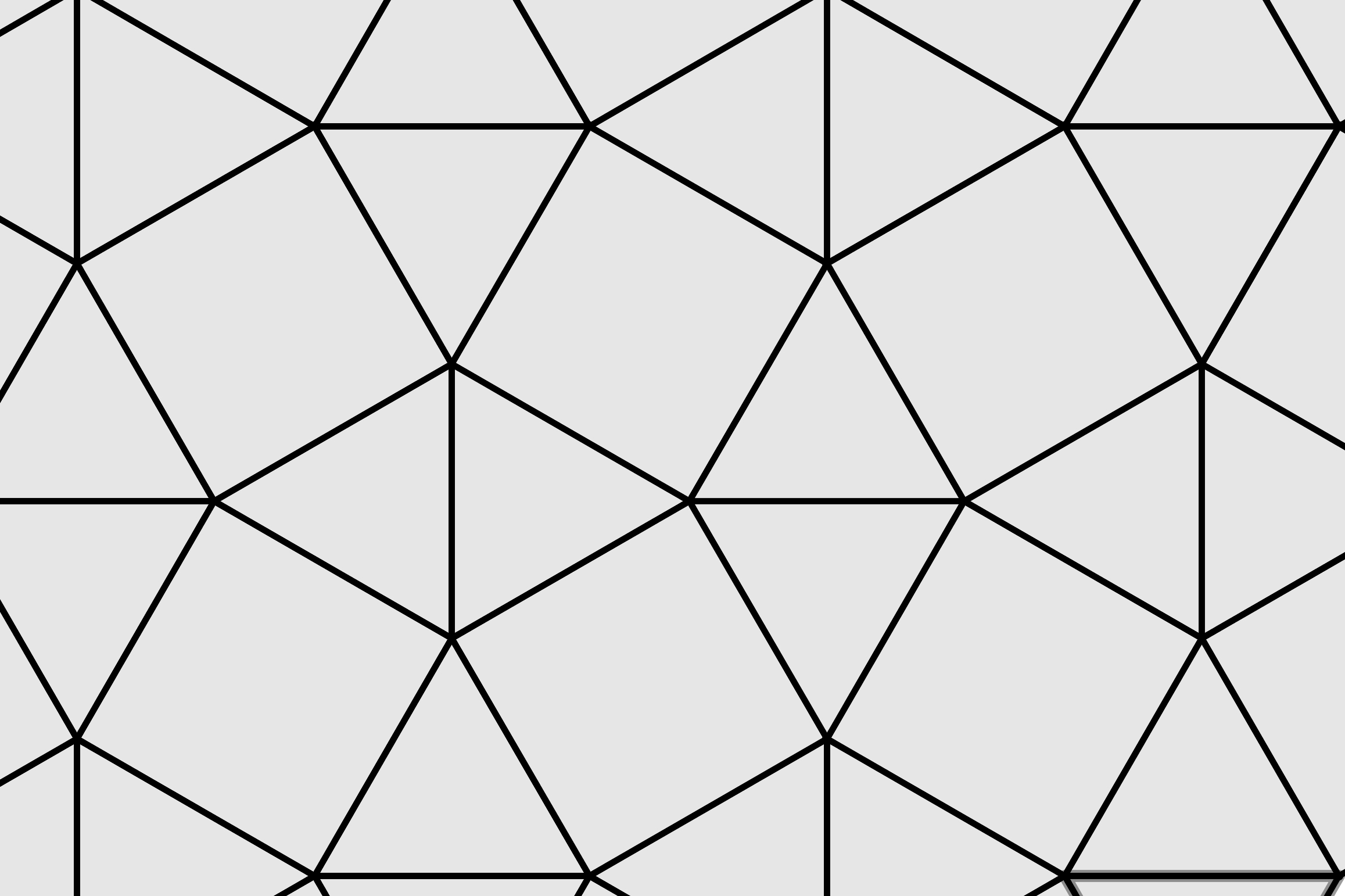}
\caption{$(3^2.4.3.4)$}
\label{33434}
\end{center}
\end{multicols}
\end{figure}

\begin{figure}[h!]
\begin{multicols}{2}
\begin{center}		
\includegraphics[width=30mm]{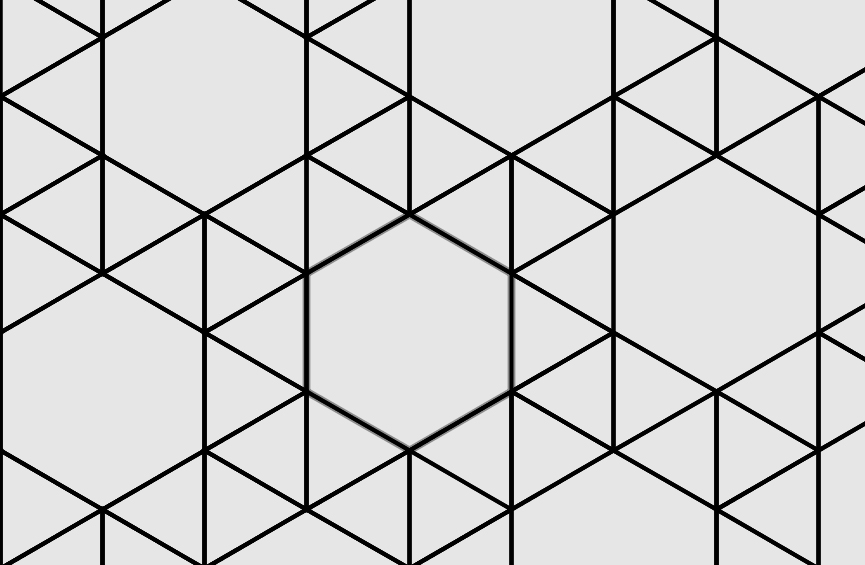}
\caption{$(3^4.6)$}
\label{33336}
\end{center}
\begin{center}		
\includegraphics[width=30mm]{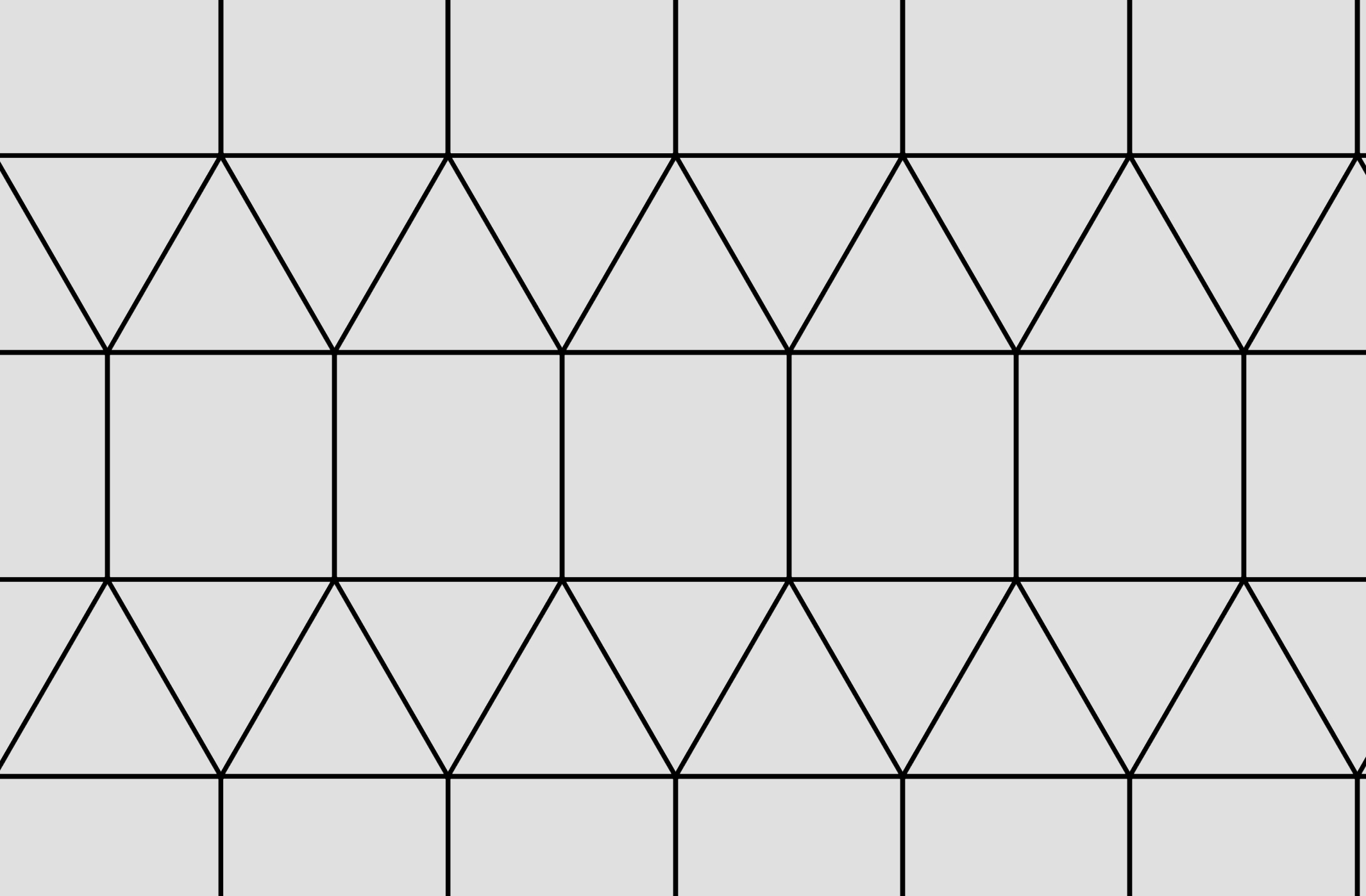}
\caption{$(3^3.4^2)$}
\label{33344}
\end{center}
\end{multicols}
\end{figure}

The equivelar Archimedean tessellations of type $\{3,6\},\,\{4,4\},\,$ and $\{6,3\}$ and the corresponding toroidal maps were considered in~\cite{DMEquivelar}. In this paper we will mainly work with the non-equivelar Archimedean tessellations of the Euclidean plane; denote $\mathcal A$ to be the set of all non-equivelar tessellations.

  Furthermore, for reference, on each such tessellation we can place a Cartesian coordinate system with the origin at a vertex of the tessellation.  For the tessellations $\{4,4\}$, $(3^2,4,3,4)$, and $(4.8^2)$, the coordinate system is further specified by assuming that the vectors ${\bf e_1}:=(1,0)$ and ${\bf e_2}:=(0,1)$ represent the shortest possible translational symmetries of the tessellation. Similarly, for the remaining Archimedean tessellations, other than $(3^3.4^2)$, we assume that the vectors ${\bf e_1}:=(1,0)$ and ${\bf e_2}:=\left(1/2, \sqrt{3}/2\right)$ represent the shortest possible translational symmetries. 

This choice of coordinate system will allow us to utilize the geometry of the Gaussian and Eisenstein integers in our results (see Subsection~\ref{Subsec:Gauss}). Given a tessellation $\tau$, we call $(\bold{e_1}, \bold{e_2})$ \textit{the basis for $\tau$}.

For the tessellation $(3^3.4^2)$ the basis will be specified separately in the later sections.

Let $\left(\bf  e_1,\bf  e_2\right)$  be the basis for $\tau \in \mathcal A \setminus\{(3^3.4^2)\}$. The set $\left\{\lambda{\bf e_1}+\mu{\bf e_2} \colon \lambda,\mu \in \mathbb{Z}\right\}$ forms the vertex set of a regular tessellation which we call \emph{the tessellation associated with $\tau$}, and which we denote by $ \tau^*$. 
 By construction, $\left(\bf  e_1,\bf  e_2\right)$ is also the basis for $ \tau^*$. 

Given an Archimedean tessellation $\tau$, denote by $T_\tau$ the maximal group of translations that preserve $\tau$. As we already mentioned, Archimedean toroidal maps can be seen as quotients of planar Archimedean tessellations. These quotients can be written explicitly in terms of possible subgroups of $T_\tau$, due to the following theorem.

\begin{theorem}[Archimedean toroidal maps are quotients; \cite{Neg}]
\label{intprop1}	
Let $\mathcal M$ be an Archimedean map on the torus. Then there exists an Archimedean tessellation $\tau$ of the Euclidean plane and a subgroup $G \leq T_\tau$ so that $\mathcal M = \tau / G$. \qed
\end{theorem}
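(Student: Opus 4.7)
The plan is to pass to the universal cover of the flat torus and show that the lifted object is a planar Archimedean tessellation of which $\mathcal{M}$ is a quotient. Let $S$ denote the flat torus carrying $\mathcal{M}$, let $\pi\colon \mathbb{R}^2 \to S$ be its universal covering, and let $\Gamma$ be the deck group, which acts freely, properly discontinuously, and cocompactly on $\mathbb{R}^2$ by isometries. Any fixed-point-free isometry of $\mathbb{R}^2$ is a translation, so $\Gamma$ is a rank-$2$ lattice of translations. I would then lift $\mathcal{M}$ vertex-by-vertex, edge-by-edge, face-by-face through $\pi$; because $\pi$ is a local isometry, the preimage $\tilde{\mathcal{M}} := \pi^{-1}(\mathcal{M})$ is a face-to-face map on $\mathbb{R}^2$ whose faces are regular polygons (since the faces of $\mathcal{M}$ are regular in the canonical flat metric on $S$).

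Next I would verify that $\tilde{\mathcal{M}}$ satisfies the hypotheses of the classification theorem quoted in the excerpt, i.e., that it is a vertex-transitive tessellation of $\mathbb{R}^2$ by regular polygons. Regularity of faces is already in hand. For vertex-transitivity, fix two vertices $v_1, v_2$ of $\tilde{\mathcal{M}}$. Their projections $\pi(v_1)$ and $\pi(v_2)$ are vertices of $\mathcal{M}$, so by the Archimedean hypothesis there is an automorphism $\varphi$ of $S$ preserving $\mathcal{M}$ and sending $\pi(v_1) \mapsto \pi(v_2)$. Since $\varphi$ must send each regular-polygon face to another regular-polygon face with matching incidences, it is an isometry of $S$; it therefore lifts to an isometry $\tilde{\varphi}$ of $\mathbb{R}^2$ preserving $\tilde{\mathcal{M}}$, and after composing with an appropriate element of $\Gamma$ we may arrange $\tilde{\varphi}(v_1) = v_2$. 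Hence $\tilde{\mathcal{M}}$ is vertex-transitive, and by the classification theorem it coincides with one of the eleven Archimedean planar tessellations; call it $\tau$.

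To finish, note that $\Gamma$ consists of translations of $\mathbb{R}^2$ that preserve $\tilde{\mathcal{M}} = \tau$, so $\Gamma \leq T_\tau$. Setting $G := \Gamma$ and using the identification $S = \mathbb{R}^2/\Gamma$, we obtain $\mathcal{M} = \tau/G$, as required. The main obstacle I expect is the step promoting a combinatorial symmetry of $\mathcal{M}$ to a genuine isometry of $S$ that lifts to $\mathbb{R}^2$: one must check that the local isometries induced on individual regular-polygon faces patch consistently across shared edges and vertices to give a global isometry of the torus. Once this rigidity statement is justified, the rest of the argument is the standard universal-cover lifting machinery.
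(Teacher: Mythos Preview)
The paper does not prove this theorem; it is quoted from Negami's work \cite{Neg} and marked with a terminal \qed. So there is no in-paper argument to compare against, and your proposal should be read as a self-contained justification.

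Your overall strategy---lift through the universal cover, identify the pullback as one of the eleven Archimedean planar tessellations via the classification theorem, and observe that the deck group sits inside $T_\tau$---is the natural one and is essentially correct. Two remarks:

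\emph{First}, the sentence ``any fixed-point-free isometry of $\mathbb{R}^2$ is a translation'' is false as stated: glide reflections are fixed-point-free. What you need is that the deck group of an \emph{orientable} flat quotient consists of orientation-preserving isometries, and the only orientation-preserving fixed-point-free Euclidean isometries are translations. With that one word added, the conclusion that $\Gamma$ is a rank-$2$ translation lattice is secure.

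\emph{Second}, the rigidity step you flag---upgrading a map automorphism of $\mathcal{M}$ to an isometry of the flat torus---is exactly the point that needs care, and your instinct is right that it is the only nontrivial ingredient. One clean way to handle it: a combinatorial automorphism restricts to a congruence on each regular polygonal face (it permutes vertices of a regular $p$-gon while preserving adjacency, hence acts as an element of the dihedral group, hence as an isometry of that face in the flat metric). These face-isometries agree on shared edges because edges are geodesic segments of a fixed length determined by the combinatorics, and they agree at shared vertices because the cyclic order of faces around a vertex is preserved. Patching gives a global isometry of $S$, which then lifts. Once this is in place, the rest of your argument goes through without change.
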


This theorem shows that, for each type, there is a one-to-one correspondence between Archimedean toroidal maps and translation subgroups of $T_\tau$; clearly, the pair of generators of every such subgroup should be comprised of non-collinear vectors.

 We point out here that the converse of Theorem~\ref{intprop1} is also true; any map on the torus that is obtained as a quotient of an Archimedean tessellation by a translation subgroup is an Archimedean toroidal map.   

Let $\mathcal M$ be an Archimedean toroidal map; by Theorem~\ref{intprop1}, it can be written as $\tau/\left<\bold a, \bold b\right>$, where $\tau$ is the planar Archimedean tessellation (of the same type as $\mathcal M$) and $\left<\bold a, \bold b\right> \leq T_\tau$ is the translation subgroup with generators $\bold a, \bold b \in T_\tau$. We use the standard notation $\tau_{\bold a, \bold b} := \tau / \left<\bold a, \bold b\right>$ for the map $\mathcal M$. Note that the pair $\bold a, \bold b$ is not uniquely defined by $\mathcal M$, but the quotient is independent of possible choices.

A \textit{flag} of a planar tessellation $\tau$ is a triple of an incident vertex, edge, and face of the tessellation.  We can then define a flag of a toroidal map $\tau_{\bold a, \bold b}$ as the orbit of a flag under the group $\left<\bold a, \bold b\right>$.  We note that when the map is combinatorially equivalent to an {abstract polytope} (see~\cite{McMull}), this is equivalent to a flag equaling a triple of an incident vertex, edge, and face of the map itself. Two flags of a map on the torus are said to be \textit{adjacent} if they lift to flags in the plane that differ in exactly one element.

Let $\calN= \tau / H $ and $\calM=  \tau / G$ be Archimedean maps on the torus, where $H$ is a subgroup of $G$.   Then there is a surjective function $\eta\colon  \calN \to \calM $ that preserves adjacency and sends vertices of $\calN$ to vertices of $\calM$ (and edges to edges, and faces to faces).  The function $\eta$   is called a \emph{covering} of the map $\mathcal M$ by the map $\mathcal N$. This is denoted by $\mathcal N \searrow \mathcal M$, and we say that $\mathcal N$ is a \emph{cover} of $\mathcal M$.  
We can use the notion of covering to create a partial order $\leq$ on a set $\mathcal S$ of toroidal maps, where $\calM \leq \calN$ if and only if $\calN $ is a cover of $\calM$.  A {\em minimal cover} of a map $\calM$ is minimal with respect to this partial order in a set of all maps that cover $\calM$.   We note here that this notion of covering can also be generalized to maps on different surfaces, and to abstract polytopes of higher ranks.  If $\mathcal S$ is the set of all regular maps that cover a given map $\calM$, then the minimal elements of the partial order are the minimal regular covers of $\calM$, as studied in~\cite{Hartley_Min_cov,Tomotope} for example.

For a map $\mathcal M = \tau / G$, where $G = \left<\bold a, \bold b\right>$, we call the parallelogram spanned by the vectors $\bold a$, $\bold b$ a \textit{fundamental region} of $\mathcal M$. Then, a fundamental region for the covering map $\mathcal N = \tau / H$, $H \leq G$, can be viewed as $K$ fundamental regions of $\mathcal M$ `glued together' (see Figure~\ref{exgluing}). It is easy to show that the number $K$ is equal to the index $[G:H]$ of the subgroup $H$ in $G$.

\begin{figure}[h]
\begin{center}
\begin{small}
\definecolor{cqcqcq}{rgb}{0.75,0.75,0.75}
\definecolor{zzzzzz}{rgb}{0.6,0.6,0.6}
\begin{tikzpicture}[line cap=round,line join=round,x=1.0cm,y=1.0cm,scale = 0.4]
\draw [color=cqcqcq,, xstep=1.0cm,ystep=1.0cm] (-5,-1) grid (12,10);
\fill[color=zzzzzz,fill=zzzzzz,fill opacity=0.1] (0,0) -- (1,3) -- (5,3) -- (4,0) -- cycle;
\clip(-5,-1) rectangle (12,10);
\draw [-{Stealth[length=2mm]}] (0,0) -- (4,0);
\draw [-{Stealth[length=2mm]}] (0,0) -- (1,3);
\draw [dash pattern=on 2pt off 2pt] (-4,0)-- (0,0);
\draw [dash pattern=on 2pt off 2pt] (-4,0)-- (-3,3);
\draw [dash pattern=on 2pt off 2pt] (-3,3)-- (1,3);
\draw [dash pattern=on 2pt off 2pt] (1,3)-- (5,3);
\draw [dash pattern=on 2pt off 2pt] (5,3)-- (4,0);
\draw [dash pattern=on 2pt off 2pt] (4,0)-- (8,0);
\draw [dash pattern=on 2pt off 2pt] (8,0)-- (9,3);
\draw [dash pattern=on 2pt off 2pt] (9,3)-- (5,3);
\draw [dash pattern=on 2pt off 2pt] (-3,3)-- (-2,6);
\draw [dash pattern=on 2pt off 2pt] (-2,6)-- (2,6);
\draw [dash pattern=on 2pt off 2pt] (1,3)-- (2,6);
\draw [dash pattern=on 2pt off 2pt] (2,6)-- (6,6);
\draw [dash pattern=on 2pt off 2pt] (6,6)-- (5,3);
\draw [dash pattern=on 2pt off 2pt] (6,6)-- (10,6);
\draw [dash pattern=on 2pt off 2pt] (10,6)-- (9,3);
\draw [dash pattern=on 2pt off 2pt] (-2,6)-- (-1,9);
\draw [dash pattern=on 2pt off 2pt] (-1,9)-- (3,9);
\draw [dash pattern=on 2pt off 2pt] (3,9)-- (2,6);
\draw [dash pattern=on 2pt off 2pt] (3,9)-- (7,9);
\draw [dash pattern=on 2pt off 2pt] (7,9)-- (6,6);
\draw [dash pattern=on 2pt off 2pt] (7,9)-- (11,9);
\draw [dash pattern=on 2pt off 2pt] (11,9)-- (10,6);
\draw [-{Stealth[length=2mm]}] (0,0) -- (9,3);
\draw [-{Stealth[length=2mm]}] (0,0) -- (-2,6);
\draw [-{Stealth[length=2mm]}] (9,3) -- (7,9);
\draw [-{Stealth[length=2mm]}] (-2,6) -- (7,9);
\draw (3.21,0.0) node[anchor=north west] {$\bold{a}$};
\draw (1.5,2.5) node {$\bold b$};
\draw (9.5,2.5) node {$\bold u$};
\draw (-2.5,6) node {$\bold v$};
\draw (2.5,1.46) node {1};
\draw (3.5,4.46) node {2};
\draw (4.5,7.46) node {3};
\draw (7,7) node {4};
\draw (7,4.5) node {5};
\draw (0,2) node {5};
\draw (5.5,2.5) node {3};
\draw (0,4.5) node {4};
\draw (1.5,6.5) node {1};
\fill [color=black] (0,0) circle (2.5pt);
\fill [color=black] (4,0) circle (2.5pt);
\fill [color=black] (1,3) circle (2.5pt);
\fill [color=black] (5,3) circle (2.5pt);
\fill [color=black] (8,0) circle (2.5pt);
\fill [color=black] (-4,0) circle (2.5pt);
\fill [color=black] (2,6) circle (2.5pt);
\fill [color=black] (6,6) circle (2.5pt);
\fill [color=black] (9,3) circle (2.5pt);
\fill [color=black] (10,6) circle (2.5pt);
\fill [color=black] (-3,3) circle (2.5pt);
\fill [color=black] (-2,6) circle (2.5pt);
\fill [color=black] (-1,9) circle (2.5pt);
\fill [color=black] (3,9) circle (2.5pt);
\fill [color=black] (7,9) circle (2.5pt);
\fill [color=black] (11,9) circle (2.5pt);
\end{tikzpicture}
\end{small}

\caption{$\{4,4\}_{\bold u, \bold v} \searrow \{4,4\}_{\bold a, \bold b}$ is a $5$-sheeted covering, and the covering map $\{4,4\}_{\bold u, \bold v}$ is obtained by gluing together $5$ fundamental regions of $\{4,4\}_{\bold a, \bold b}$.} 
\label{exgluing}
\end{center}

\end{figure}
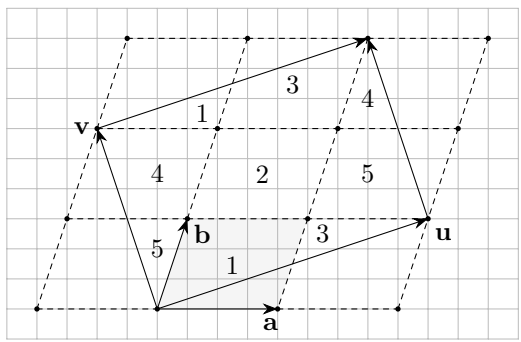

\subsection{Symmetries and automorphisms of tessellations and maps}

In this section we follow~\cite{ToroidsFewOrbits, HOPW, OrientableTwistoids} and~\cite{Didicosm} in our notation and definitions.

Let $\tau$ be a tessellation of the Euclidean plane, and let $\Aut(\tau)$ be its symmetry group (the collection of isometries of the Euclidean plane that preserve the tessellation). 
Let $G$ and $H$ be two subgroups of $\Aut(\tau)$ generated by two linearly independent translations.  The maps $\tau /G$ and $\tau/H$ are isomorphic if $G$ and $H$ are conjugate in $\Aut(\tau)$.

A symmetry $\gamma \in \Aut(\tau)$ induces an automorphism of a toroidal map $\tau /G$ if and only if it normalizes $G$, that is $\gamma G \gamma^{-1} = G$; denote $\Norm_{\Aut(\tau)}(G)$ for the group of elements in $\Aut(\tau)$ that normalize $G$. Geometrically, such $\gamma$ maps fundamental regions of $\tau /G$ to  fundamental regions of $\tau /G$. 

Finally, the we define the automorphism group $\Aut(\tau/G)$ as the group induced by the normalizer $\Norm_{\Aut(\tau)}(G)$; in other words $\Aut(\tau/G) \cong \Norm_{\Aut(\tau)}(G)/G$. We will also denote the collection of symmetries $\Norm_{\Aut(\tau)}(G)$ as simply $\Sym(\tau/G)$.
 
We note here that an automorphism of a map can equivalently be defined as an automorphism of the underlying graph that can be extended to a homeomorphism of the surface.

A map $\mathcal M$ is called \emph{regular} if its automorphism group acts transitively on the set of flags. A map $\mathcal M$ is called \emph{chiral} if its automorphism group has two orbits on flags with adjacent flags lying in different orbits. A map $\mathcal M$ is called \emph{rotary} if it is either regular or chiral. 

For the toroidal maps of type $\{4,4\}$, $\{3,6\}$, and $\{6,3\}$ the minimum possible number of flag orbits is one, given by regular maps of those types (which have been previously classified, see also Subsection~\ref{Subsec:RegChiral}). For other types of maps on the torus, the minimum number of flag orbits will not be one.  However, we still would like to understand the maps of each type that achieve the fewest possible number of flag orbits. 

An Archimedean map on the torus is called \emph{almost regular} if it has the same number of flag orbits under the action of its automorphism group as the Archimedean tessellation on the plane of the same type has under the action of its symmetry group.

\subsection{Regular and chiral toroidal maps}
\label{Subsec:RegChiral}

The classification in the next sections depends heavily on the classification of regular and chiral toroidal maps.  Here we summarize the relevant details about toroidal maps of type $\{4,4\}$ and $\{3,6\}$ that are needed in our results.  The results in this subsection, and much more can all be found in~\cite{HOPW}.

Let $\tau$ be a tessellation of the Euclidean plane of type $\{4,4\}$ or $\{3,6\}$.  Then $\Aut(\tau)$ is of the form $ T_\tau \rtimes {\bf S}$, where ${\bf S}$ is the stabilizer of a vertex of $\tau$, which we can assume to be the origin without loss; ${\bf S}$ is called \emph{a point stabilizer}.    

Then let $\calM = \tau/G$ be a toroidal map.  Notice that every translation in $T_\tau$ induces an automorphism of $\calM$ (where the elements of $G$ induce the trivial automorphism).  Define $\chi$ as the central inversion of the Euclidean plane, that is the symmetry that sends any vector ${\bf u}$ to ${\bf  - u}$.  Then $\Aut(\calM)$ is induced by a group ${\bf K}$ so that $ T_\tau \rtimes \langle \chi \rangle \leq {\bf K} \leq \Aut(\tau)$. 

Furthermore, there is a bijection between such groups ${\bf K}$ and subgroups ${\bf K'}$ of ${\bf S}$ containing $\chi$, and thus one needs to determine which symmetries in the point stabilizer ${\bf S}$ normalize $G$.   Finally, the number of flag orbits of the toroidal map $\calM$ is the index of $\Norm_{\Aut(\tau)}(G)$ in $\Aut(\tau)$, which is the same as the index of $ {\bf K'}$ in $ {\bf S}$.

First let us consider toroidal maps of type $\{4,4\}$; let $\tau$ be the regular tessellation of the Euclidean plane of this type, and $({\bf e_1}, {\bf e_2})$ be the basis for $\tau$.  The point stabilizer  $ {\bf S}$ is generated by two reflections $R_1$ and $R_2$, where $R_1$ is reflection across the line spanned by ${\bf e_1 + e_2}$, sending vectors $(x,y)$ to $(y,x)$, and $R_2$ is reflection across the line spanned by ${\bf  e_1}$, sending vectors $(x,y)$ to $(x,-y)$.  There are exactly three conjugacy classes of proper subgroups $ {\bf K'}$ of $ {\bf S}$ containing $\chi$ but not equal to $\langle \chi \rangle$. In other words there are exactly five possible point stabilizers: all of  ${\bf S}$, only $\langle \chi \rangle$, and finally the three groups described next. The three subgroups are $ {\bf K'}$ are $\langle \chi, R_1 \rangle $, $\langle \chi, R_2 \rangle $, and $\langle \chi, R_1 R_2 \rangle $, and each has index 2 in ${\bf S}$, where  $\langle \chi \rangle$ has index 4 in ${\bf S}$.  

  We note that it is important for our classification to notice that a toroidal map of type $\{4,4\}$ is regular if and only if $ {\bf K'}$ contains both $R_1$ and $R_2$, as well as $R_1R_2$ which is the rotation by ${\pi}/{2}$ around the origin.  This occurs only in the two well known families of regular toroidal maps, $\{4,4\}_{(a,0),(0,a)}$ and $\{4,4\}_{(a,a),(a,-a)}$, both of which have squares as fundamental regions. The chiral toroidal maps of type $\{4,4\}$ also have squares as their fundamental regions, but have no reflections in their automorphism groups. Finally, the remaining classes of toroidal maps have fundamental regions that are not squares.

Next, let us consider toroidal maps of type $\{3,6\}$; let $\tau$ be the regular tessellation of the Euclidean plane of this type.   We use the previously described basis of ${\bf e_1}=(1,0)$ and ${\bf e_2}=\left(1/2, \sqrt{3}/2\right)$ to describe the symmetries of these maps.

 The point stabilizer  $ {\bf S}$ is again generated by two reflections $R_1$ and $R_2$, where $R_1$ is reflection across the line spanned by ${\bf e_1 + e_2}$, sending vectors $(x,y)$ to $(y,x)$, and $R_2$ is reflection across the line spanned by ${\bf  e_1}$, sending vectors $(x,y)$ to $(x+y,-y)$.

 Now there are exactly two conjugacy classes of proper subgroups $ {\bf K'}$ of $ {\bf S}$ containing $\chi$ but not equal to $\langle \chi \rangle$. These subgroups $ {\bf K'}$ are $\langle \chi, R_1R_2 \rangle $, with index 2 in ${\bf S}$, and $\langle \chi, R_2 \rangle $ with index 3 in ${\bf S}$, where  $\langle \chi \rangle$ has index 6 in ${\bf S}$.  

  We note that it is important for our classification to notice that a toroidal map of type $\{3, 6\}$ is regular if and only if $ {\bf K'}$ contains both $R_1$ and $R_2$, as well as $R_1R_2$ which is the rotation by ${\pi}/{3}$ around the origin.  This occurs only in the two well known families of regular toroidal maps, $\{3,6\}_{(a,0),(0,a)}$ and $\{3,6\}_{(a,a),(2a,-a)}$. For those two families the fundamental regions are parallelograms composed of two regular triangles. The chiral toroidal maps of type $\{3,6\}$ also have parallelograms composed of two regular triangles as their fundamental regions, but have no reflections in their automorphism groups; this is similar to the type $\{4,4\}$.    

\section{Almost regular maps}
\label{Sec:AlmostReg}

In this section we consider Archimedean tessellations of the torus with as much symmetry as possible.  As we already mentioned, one natural way to understand the symmetry of a map is to consider the action of its automorphism group on its flags.  Here we want to understand the maps on the torus with as few flag orbits as possible.  

\begin{theorem}[Regular to almost regular maps]
\label{Thm:RegAlmostRegular}
Let 
$$
\mathcal{A}_{\text{reg}} := \left\{(4.8^2),\,(3.6.3.6),\,(3.12^2),\,(4.6.12),\,(3.4.6.4),\,(3^2.4.3.4)\right\}
$$ 
and $\tau \in \mathcal A_{\text{reg}}$ be an Archimedean tessellation of one of these types. Then $\tau_{\bold{u},\bold{v}}$ is an almost regular Archimedean map if and only if $ \tau^*_{\bold{u}, \bold{v}}$ is a regular map on the torus.
\end{theorem}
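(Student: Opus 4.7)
The plan is to recast both conditions of the theorem in terms of how the point group of the ambient tessellation acts on $G := \langle \mathbf{u}, \mathbf{v}\rangle$, and then observe that the point groups of $\tau$ and $\tau^*$ coincide for $\tau \in \mathcal{A}_{\text{reg}}$.

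By the basis convention, the translation subgroups coincide: $T_\tau = T_{\tau^*} = \mathbb{Z}\mathbf{e}_1 \oplus \mathbb{Z}\mathbf{e}_2$, so $G$ is a translation subgroup of both tessellations, and both $\tau/G$ and $\tau^*/G$ are well-defined. Write the exact sequence $1 \to T_\tau \to \Aut(\tau) \xrightarrow{\sigma_\tau} P_\tau \to 1$ associated with the wallpaper group $\Aut(\tau)$, where $P_\tau \leq O(2)$ is the finite point group; set up the analogous sequence for $\tau^*$. Since the conjugation action of $\Aut(\tau)$ on the abelian group $T_\tau$ factors through the linear action of $P_\tau$ on the lattice, an element $g \in \Aut(\tau)$ normalizes $G$ if and only if $\sigma_\tau(g)$ preserves $G$ setwise. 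Setting $K_\tau := \{p \in P_\tau : p(G) = G\}$, this gives $\Norm_{\Aut(\tau)}(G) = \sigma_\tau^{-1}(K_\tau)$ and hence $|\Aut(\tau/G)| = |K_\tau| \cdot [T_\tau : G]$. Letting $f_\tau$ denote the number of flags of $\tau$ contained in a fundamental domain of $T_\tau$, and using that the automorphism group of a map acts freely on its flags, a routine count then yields
\begin{equation*}
\#\{\text{flag orbits of } \tau/G\} \;=\; \frac{f_\tau}{|K_\tau|}, \qquad \#\{\text{flag orbits of } \tau \text{ in the plane}\} \;=\; \frac{f_\tau}{|P_\tau|}.
\end{equation*}
Consequently $\tau_{\mathbf{u},\mathbf{v}}$ is almost regular iff $K_\tau = P_\tau$; the identical reasoning applied to $\tau^*$ shows that $\tau^*_{\mathbf{u},\mathbf{v}}$ is regular iff $K_{\tau^*} = P_{\tau^*}$.

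It then suffices to prove $P_\tau = P_{\tau^*}$ as subgroups of $O(2)$ for every $\tau \in \mathcal{A}_{\text{reg}}$. The inclusion $P_\tau \subseteq P_{\tau^*}$ is automatic: any linear part of an isometry of $\tau$ must preserve the lattice $T_\tau = T_{\tau^*}$, and for the square and triangular lattices the full group of linear lattice-preserving isometries is $D_4$ and $D_6$ respectively, which coincides with $P_{\tau^*}$. For the reverse inclusion I would check case by case that every element of $D_4$ (when $\tau^* = \{4,4\}$, i.e., for $(4.8^2)$ and $(3^2.4.3.4)$) or of $D_6$ (for the remaining four tessellations) appears as the linear part of some isometry in $\Aut(\tau)$: the full rotational subgroup is supplied by the central rotations about the largest faces (octagons, hexagons, or dodecagons), while each reflection is realized either as an honest mirror symmetry (wallpaper groups p4m and p6m) or as a glide reflection in the single exceptional case $(3^2.4.3.4)$, whose wallpaper group is p4g.

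The main obstacle is this last case-by-case verification; the only delicate case is $(3^2.4.3.4)$, for which the reflections of $D_4$ are visible only as glide reflections, so one must explicitly locate the glide axes inside the snub-square pattern. Once $P_\tau = P_{\tau^*}$ is secured, the conditions $K_\tau = P_\tau$ and $K_{\tau^*} = P_{\tau^*}$ become literally the same condition on $G$, proving the equivalence stated in the theorem.
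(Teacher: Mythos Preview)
Your proof is correct and takes a genuinely different, more unified route than the paper. The paper establishes the theorem through six separate propositions, one per tessellation in $\mathcal{A}_{\text{reg}}$: in each case it names the specific reflections (or, for $(3^2.4.3.4)$, a quarter-turn together with a reflection) that must appear in $\Sym(\tau/G)$ for the flag-orbit count to be minimal, invokes the explicit description of point stabilizers of $\{4,4\}$ and $\{3,6\}$ from Subsection~\ref{Subsec:RegChiral} to conclude that $\tau^*/G$ is regular, and then checks the converse by directly counting how those symmetries collapse the $f_\tau$ translation-orbits of flags down to the target number.

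Your argument replaces all six verifications by a single structural observation: the flag-orbit count of $\tau/G$ equals $f_\tau/|K_\tau|$, so almost-regularity is the condition $K_\tau=P_\tau$, and likewise regularity of $\tau^*/G$ is $K_{\tau^*}=P_{\tau^*}$; since $K_\tau$ and $K_{\tau^*}$ are the stabilizers of the \emph{same} lattice $G$ inside $P_\tau$ and $P_{\tau^*}$ respectively, the whole theorem collapses to the equality $P_\tau=P_{\tau^*}$ of point groups, which is a one-line consequence of the wallpaper classification (p4m or p6m for five of the six, p4g for $(3^2.4.3.4)$). This buys you uniformity and a transparent explanation of \emph{why} the theorem holds, and it handles the delicate $(3^2.4.3.4)$ case cleanly once you note that the reflections of $D_4$ appear as glide reflections in p4g---exactly the point you flag as the only nontrivial verification. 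The paper's approach, by contrast, is more hands-on and self-contained: it never invokes the wallpaper classification, it exhibits the relevant mirrors and rotations explicitly in pictures, and it records the actual minimal flag-orbit numbers ($3,2,3,6,4,5$) along the way. Both approaches ultimately rest on the same fact---that the linear parts of $\Aut(\tau)$ fill out the full $D_4$ or $D_6$---but yours isolates this as the sole content of the theorem.
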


The proof of this theorem will follow from the following six propositions, each separately dealing with a type of map.

\begin{proposition}[Almost regular maps of type $(4.8^2)$]
\label{Prop:4.8^2}
A map $\calM = \tau/G$ of the torus of type $(4.8^2)$ is almost regular (with three flag orbits) if and only if $\tau^*/G$ is regular.
\end{proposition}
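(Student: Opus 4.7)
My plan is to reduce both conditions to a single condition on $G$: invariance under the common point group of $\tau = (4.8^2)$ and $\tau^* = \{4,4\}$, which equals $D_4$ in both cases. The core of the argument is that, even though the origin is a vertex of $(4.8^2)$ whose local stabilizer in $\Aut(\tau)$ is a single reflection and not the full $D_4$, the entire dihedral group appears in the point group $\Aut(\tau)/T_\tau$ via isometries centered elsewhere, and it acts on the translation lattice $T_\tau = T_{\tau^*}$ in exactly the same way as for $\{4,4\}$. Once this coincidence is in hand, both ``almost regular for $\tau/G$'' and ``$\tau^*/G$ is regular'' are seen to be the same condition on $G$.

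First I would verify the point-group identification. Using the $\pi/2$-rotations centered at any square of $(4.8^2)$ together with the reflections in the horizontal, vertical, and two diagonal axes through a square's center, I would exhibit eight cosets of $T_\tau$ in $\Aut(\tau)$ whose linear parts exhaust $D_4$, matching the $D_4$ acting on $\{4,4\}$. Then, since conjugating a translation $T_{\mathbf v}$ by an isometry $\sigma = T_{\mathbf c}L$ yields $T_{L\mathbf v}$, a symmetry normalizes $G$ if and only if its linear part preserves $G$. It follows that
\[
\Norm_{\Aut(\tau)}(G)/T_\tau \;=\; \{L \in D_4 : L(G) = G\} \;=\; \Norm_{\Aut(\tau^*)}(G)/T_{\tau^*};
\]
call this common subgroup $H \leq D_4$.

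The second step is a flag-orbit count. A fundamental region of $T_\tau$ contains four vertices of $\tau$ with six incident flags each, hence $24$ flags in all. The $\Aut(\tau)$-stabilizer of any flag is trivial---the only nontrivial vertex-stabilizing symmetry at the origin is the reflection $\rho$ bisecting the incident square, and $\rho$ swaps the two octagons at the vertex, hence fixes no flag---so the induced action of $D_4$ on these $24$ flags is free, giving exactly three orbits of size $8$, one per flag type of $(4.8^2)$. For any subgroup $H \leq D_4$ acting freely on an orbit of size $|D_4|$, the number of $H$-orbits in it is $[D_4 : H]$, so $\tau/G$ has $3[D_4 : H]$ flag orbits. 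Applying the identical computation to $\tau^* = \{4,4\}$---which has one vertex and eight flags per fundamental region, forming a single free $D_4$-orbit---yields $[D_4 : H]$ flag orbits for $\tau^*/G$. Therefore $\tau/G$ has three flag orbits if and only if $H = D_4$, if and only if $\tau^*/G$ is regular.

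The main obstacle I expect is the first step: verifying that every one of the eight linear parts in $D_4$ really occurs as the linear part of a symmetry of $(4.8^2)$, and moreover that the resulting action on $\mathbb Z\mathbf e_1 \oplus \mathbb Z\mathbf e_2$ coincides with the standard $D_4$-action on $\{4,4\}$. Because the origin-stabilizer in $\Aut(\tau)$ is only order two, most of these linear parts must be realized by isometries centered away from the origin---$\pi/2$-rotations around square or octagon centers, and reflections or glide reflections along axes offset from the origin. Once this is established, the remainder of the argument is a short orbit count.
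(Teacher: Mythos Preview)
Your argument is correct and takes a genuinely different, more structural route than the paper. The paper proceeds concretely: it names two specific reflections of $(4.8^2)$ (across lines in the directions $\mathbf{e_1}+\mathbf{e_2}$ and $\mathbf{e_1}$ through a square center), argues that three flag orbits force both to lie in $\Sym(\calM)$, and then invokes the classification in Subsection~\ref{Subsec:RegChiral} to conclude that $\tau^*/G$ is regular; the converse is checked by noting that the $24$ translational flag orbits collapse to $3$ under these reflections. You instead identify the common point group $\Aut(\tau)/T_\tau = \Aut(\tau^*)/T_{\tau^*} = D_4$, observe that normalizing $G$ depends only on the linear part, and reduce both statements to the single condition $\{L\in D_4 : L(G)=G\} = D_4$ via a free-action orbit count. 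What your approach buys is uniformity: exactly the same argument, with $D_4$ replaced by $D_6$, handles every type in $\mathcal A_{\text{reg}}$ at once, whereas the paper repeats the explicit-reflection argument six times. What the paper's approach buys is concreteness and visual clarity (the figures make the three flag classes and the relevant mirror lines explicit), and it avoids having to verify the full point-group coincidence, which---as you rightly flag---is the only nontrivial step in your version.
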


\begin{proof}

Notice first that $\tau^*$ is of type $\{4,4\}$, and let $({\bf e_1},{\bf e_2})$ be the basis for $\tau$ (and hence for $\tau^*$).

Assume that a map $\calM$ of the torus of type $(4.8^2)$ has exactly three flag orbits. For this to be the case, there must be the following symmetries in $\Sym(\calM)$, as shown in Figure~\ref{f488}:
\begin{itemize}
\item 
reflection across a line in the direction ${\bf  e_1 + e_2}$ through the center of a square of the map;
\item 
reflection across a line in the direction ${\bf  e_1 }$ through the center of a square and the edge of an octagon.
 \end{itemize}

It was summarized in Subsection~\ref{Subsec:RegChiral} that the existence of these symmetries is enough to show that  $\tau^*/G$ is regular.

Conversely, if $\tau^*/G$ is regular, then the fundamental region of $\calM$ is a square, and each of the previous listed symmetries are elements of $\Sym(\calM)$. Furthermore, every translational symmetry in $\Sym(\tau/T_\tau)$ is also in $\Sym(\tau/G)$, and thus $\calM$ has only three flag orbits. Note that the translations in $\Sym(\tau/T_\tau)$ act on the flags in 24 flag orbits, and then the listed symmetries force there to only be three orbits.

\begin{figure}[h]
$$\includegraphics[width=50mm]{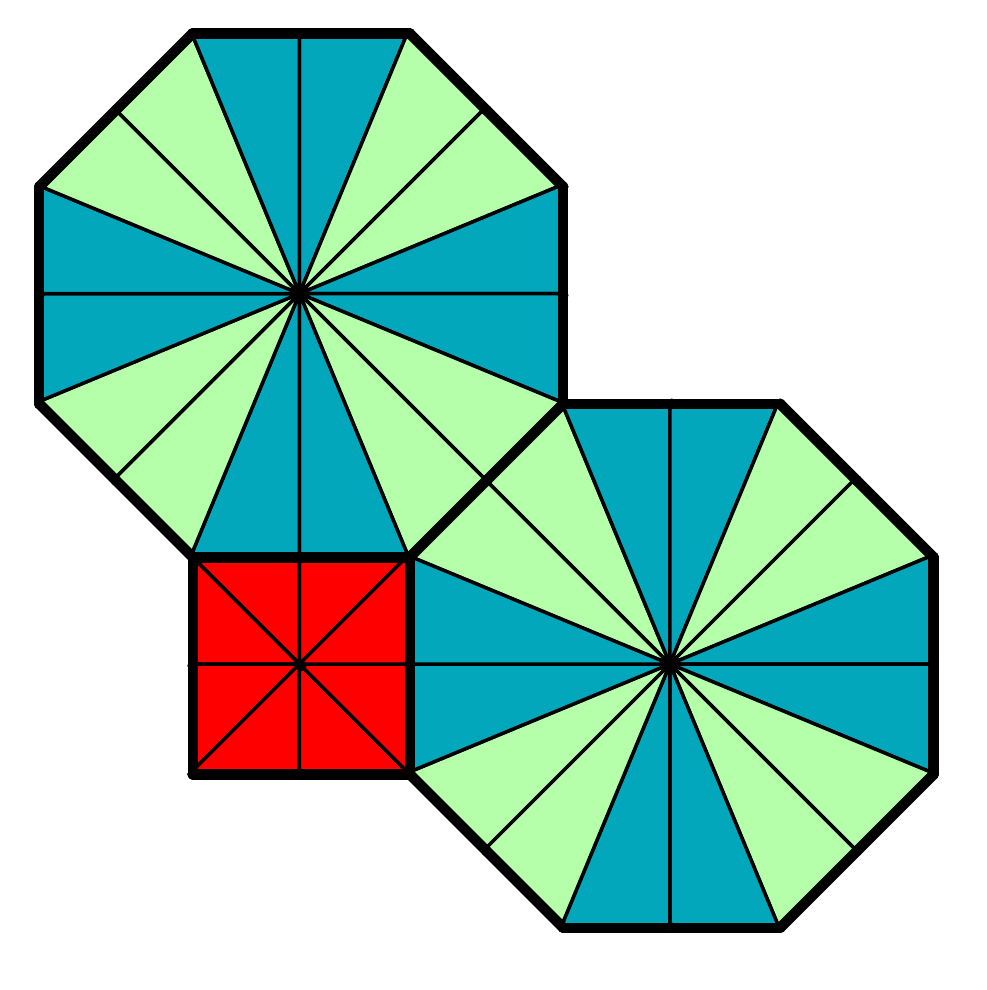} \hspace{1cm} \includegraphics[width=50mm]{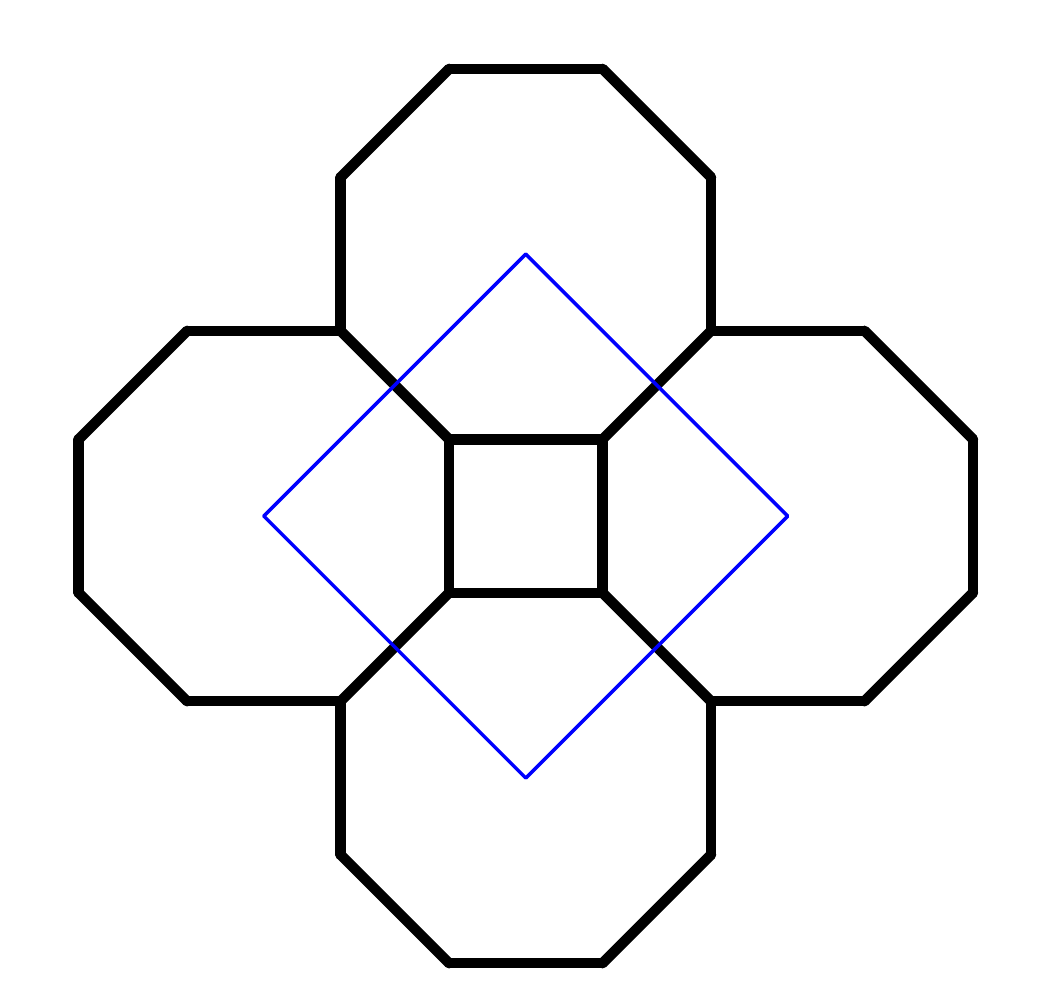}$$
\caption{Minimum number of flag orbits for $\tau$ of type $(4.8^2)$, and a fundamental region of $\tau/T_\tau$.}
\label{f488}
\end{figure}

On the left of Figure~\ref{f488}, and other figures to follow, all the faces incident to single vertex are shown, and the flags in these faces, which can be seen as triangles in the barycentric subdivision of the tessellation, are colored based on their orbit.  On the right of Figure~\ref{f488}, the fundamental region of  $\tau/T_\tau$ is shown in blue, with the underlying tessellation shown in black.  One can see, for example, that this fundamental region contains 24 flags of $\tau$.
\end{proof}

The proofs of the following five proposition is similar to the proof of Proposition~\ref{Prop:4.8^2}.

\begin{proposition}[Almost regular maps of type $(3.6.3.6)$]
A map $\calM = \tau/G$ of the torus of type $(3.6.3.6)$ is almost regular (with two flag orbits) if and only if $\tau^*/G$ is regular.
\end{proposition}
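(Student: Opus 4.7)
The plan is to mirror the proof of Proposition~\ref{Prop:4.8^2}, substituting type $(3.6.3.6)$ for $(4.8^2)$. First I would observe that, with the basis $(\mathbf{e_1}, \mathbf{e_2}) = ((1,0),(1/2,\sqrt{3}/2))$, the associated tessellation $\tau^*$ for $\tau = (3.6.3.6)$ is of type $\{3,6\}$, so that $\tau$ and $\tau^*$ share the same translation lattice $T_\tau$ and the same point group $D_6$ as a subgroup of $O(2)$. On the plane, $\tau$ has exactly two flag orbits under $\Aut(\tau)$---one inside the triangles and one inside the hexagons---because every edge of $(3.6.3.6)$ separates a triangle from a hexagon, so each hexagon face carries the full $D_6$ action and each triangle face the full $D_3$ action.

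For the forward direction, suppose $\calM = \tau/G$ has two flag orbits. As in the proof of Proposition~\ref{Prop:4.8^2}, I would exhibit the symmetries of $\tau$ that must then lie in $\Sym(\calM)$ via a figure analogous to Figure~\ref{f488}: namely the reflection of $\tau$ across the line through the origin in the direction of $\mathbf{e_1}+\mathbf{e_2}$ (which fixes a vertex of $\tau$), together with the reflection of $\tau$ across the horizontal line through the center of a hexagon of $\tau$ (which fixes that hexagon center). Their linear parts coincide, respectively, with $R_1$ and $R_2$ from Subsection~\ref{Subsec:RegChiral}.

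The central algebraic observation is that for any $\gamma = T_v \circ A \in \Aut(\tau)$ with $A \in O(2)$ and any $T_w \in T_\tau$, we have $\gamma T_w \gamma^{-1} = T_{Aw}$; whether $\gamma$ normalizes $G \leq T_\tau$ therefore depends only on its linear part $A$. Hence the two reflections above belong to $\Norm_{\Aut(\tau)}(G)$ if and only if $R_1$ and $R_2$ normalize $G$ inside $\Aut(\tau^*)$, which by Subsection~\ref{Subsec:RegChiral} is precisely the condition that $\tau^*/G$ is regular. The converse direction follows from the same identity: if $\tau^*/G$ is regular then the full point group $D_6$ of $\tau$ lies in $\Norm_{\Aut(\tau)}(G) = \Aut(\tau)$, and $\calM$ inherits the two flag orbits of $\tau$.

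The main obstacle is the geometric subtlety that the reflection with linear part $R_2$ is not a symmetry of $\tau$ fixing the origin vertex: the origin-vertex stabilizer in $\Aut(\tau)$ is only the $D_2$ subgroup $\langle \chi, R_1 \rangle$ of $\mathbf{S}$. One must realize $R_2$ as a symmetry of $\tau$ fixing a hexagon center (where the full $D_6$ acts), and the conjugation identity above is precisely what lets one transfer the normalization condition freely between $\Aut(\tau)$ and $\Aut(\tau^*)$ in spite of this mismatch.
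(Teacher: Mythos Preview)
Your proposal is correct and follows essentially the same approach as the paper: identify the two reflections (with linear parts $R_1$ and $R_2$) that must lie in $\Sym(\calM)$ when there are only two flag orbits, then invoke Subsection~\ref{Subsec:RegChiral} to conclude that $\tau^*/G$ is regular, and reverse the reasoning for the converse. Your explicit use of the conjugation identity $\gamma T_w \gamma^{-1} = T_{Aw}$ to justify that only the linear part of a symmetry matters for normalizing $G$ is a welcome bit of rigor that the paper leaves implicit, and it cleanly resolves the subtlety you raise about the vertex stabilizer of $(3.6.3.6)$ not being all of $\mathbf{S}$.
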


\begin{proof}
Notice first that $\tau^*$ is of type $\{3,6\}$, and let $({\bf e_1},{\bf e_2})$ be the basis for $\tau$.

Assume that a map $\calM$ of the torus of type $(3.6.3.6)$ has exactly two flag orbits.   For this to be the case, there must be the following symmetries in $\Sym(\calM)$, as shown in Figure~\ref{f3636}:
\begin{itemize}
\item 
reflection across a line in the direction ${\bf  e_1 + e_2}$ going through the centers of a hexagon and an adjacent triangle;
\item 
reflection across a line in the direction ${\bf  e_1}$ going through the centers of two hexagon incident to the same vertex.

\end{itemize}
It was summarized in Subsection~\ref{Subsec:RegChiral} that the existence of these symmetries is enough to conclude that  $\tau^*/G$ is regular.

\begin{figure}[h]
$$\includegraphics[width=50mm, angle =30]{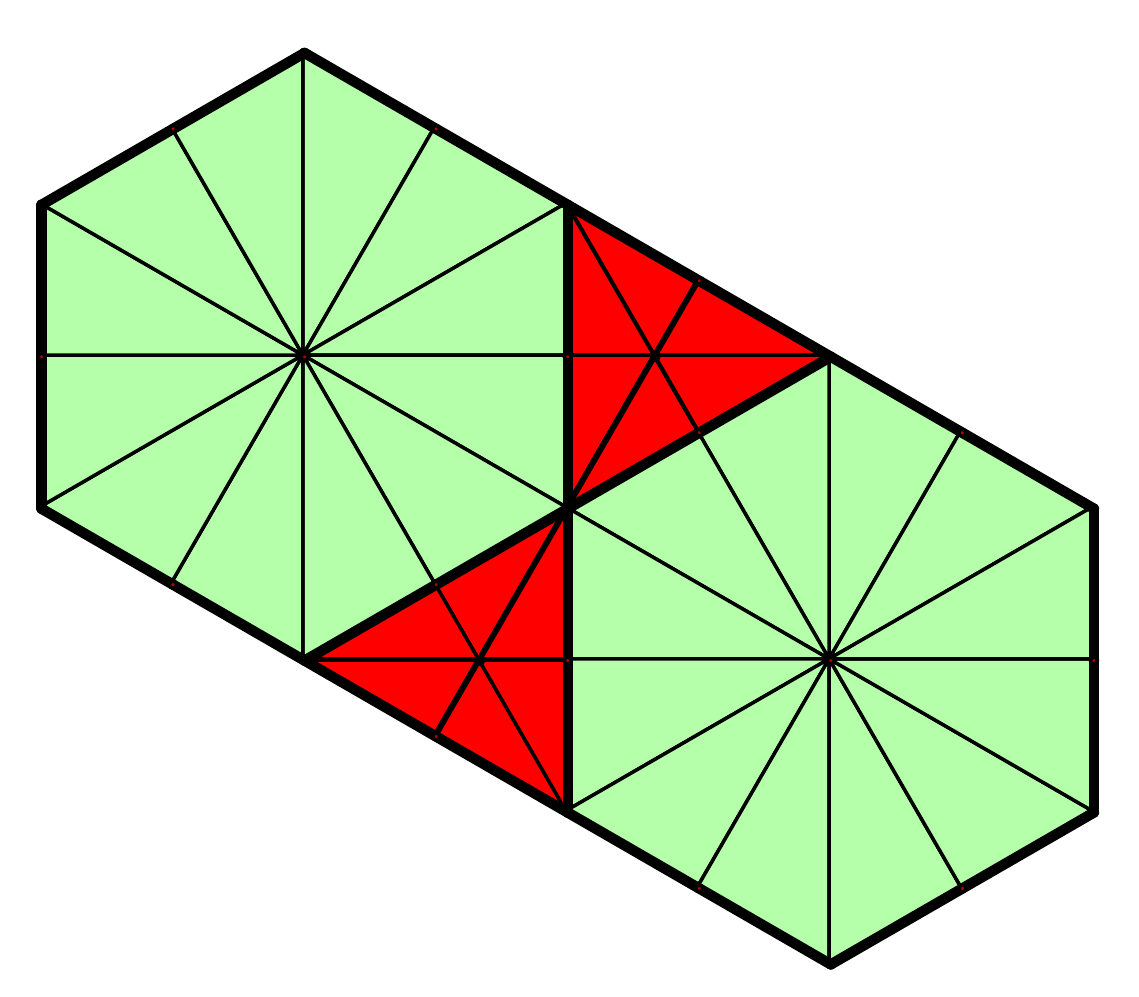} \hspace{1cm} \includegraphics[width=50mm, angle =90]{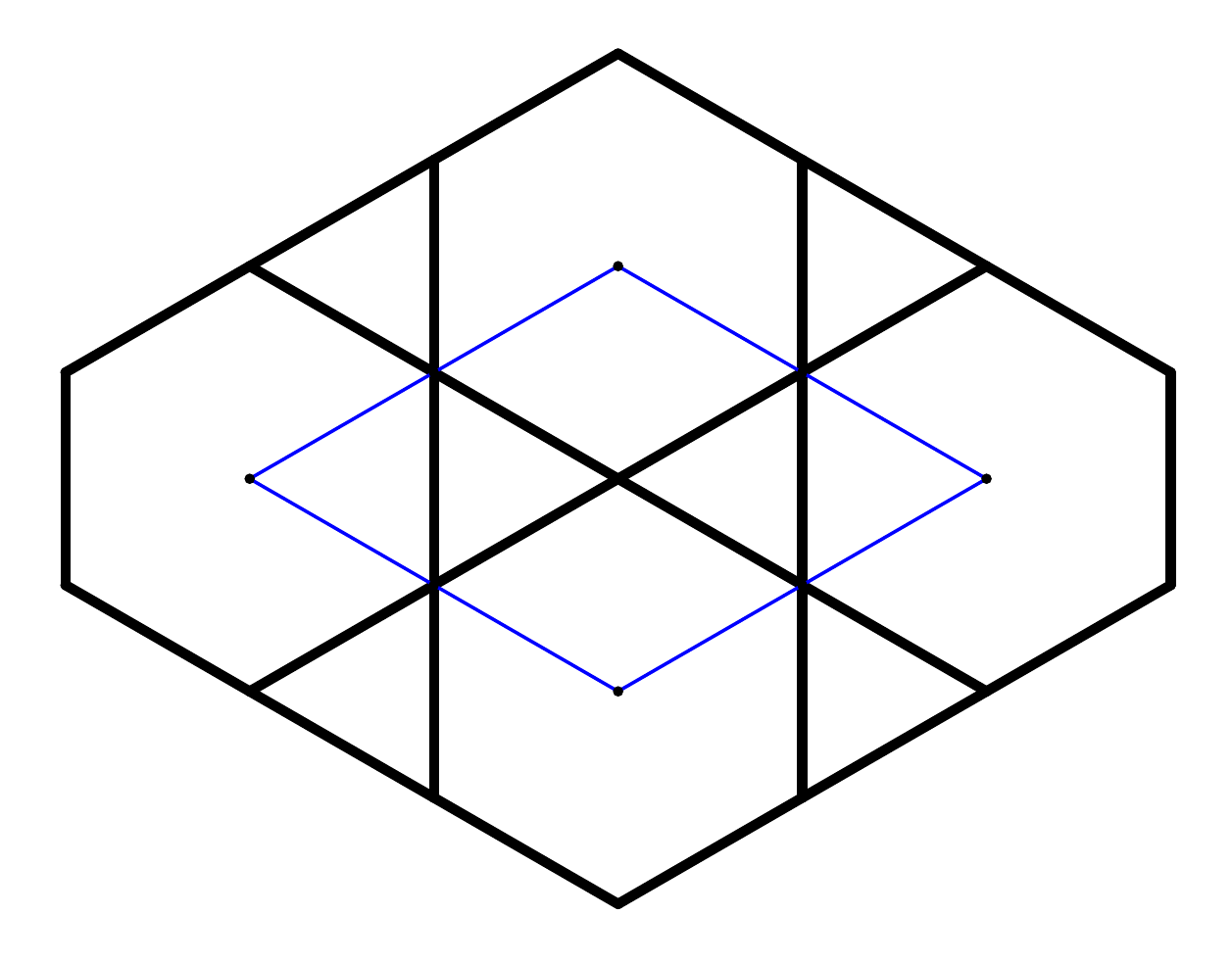}$$
\caption{Minimum number of flag orbits for $\tau$ of type $(3.6.3.6)$, and a fundamental region of $\tau/ T_\tau$.}
\label{f3636}
\end{figure}

Conversely, if $\tau^*/G$ is regular, then each of the previous listed symmetries are elements of $\Sym(\calM)$.  Furthermore, every translational symmetry in $\Sym(\tau/ T_\tau)$ is also in $\Sym(\tau/G)$, and thus $\calM$ has only two flag orbits. Note that the translations in $\Sym(\tau/ T_\tau)$ act on the flags in 24 flag orbits, and then the listed symmetries force there to only be two orbits.
\end{proof}

\begin{proposition}[Almost regular maps of type $(3.12^2)$]
A map $\calM = \tau/G$ of the torus of type $(3.12^2)$ is almost regular (with three flag orbits) if and only if $\tau^*/G$ is regular.
\end{proposition}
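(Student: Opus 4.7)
The plan is to follow the template of the two preceding propositions. My first step is to note that the basis $(\mathbf{e_1}, \mathbf{e_2}) = ((1,0),(1/2,\sqrt{3}/2))$ is the hexagonal basis, so the vertex lattice $\mathbb{Z}\mathbf{e_1}+\mathbb{Z}\mathbf{e_2}$ of $\tau^*$ is a triangular lattice and hence $\tau^*$ is of type $\{3,6\}$. This places us in the setting of Subsection~\ref{Subsec:RegChiral}: $\tau^*/G$ is regular if and only if both $R_1$ (reflection across the line in direction $\mathbf{e_1}+\mathbf{e_2}$) and $R_2$ (reflection across the line in direction $\mathbf{e_1}$) normalize $G$.

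For the forward direction, suppose $\calM$ has exactly three flag orbits, the planar minimum. Drawing the six flags incident to a single vertex of $\calM$ in the style of Figure~\ref{f488} and coloring them by their planar orbit, I would identify the two reflections that must lie in $\Sym(\calM)$ in order to produce the required identification pattern: a reflection across a line in direction $\mathbf{e_1}+\mathbf{e_2}$ through the centroid of a triangle (which simultaneously bisects the pair of adjacent dodecagons), and a reflection across a line in direction $\mathbf{e_1}$ through the midpoints of two opposite dodecagon--dodecagon edges at the chosen vertex. Since these mirrors coincide with the generators $R_1$ and $R_2$ of the point stabilizer of $\tau^*$, both $R_1$ and $R_2$ normalize $G$, and Subsection~\ref{Subsec:RegChiral} yields that $\tau^*/G$ is regular.

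For the converse, assume $\tau^*/G$ is regular, so both $R_1$ and $R_2$ normalize $G$. A quick inspection of Figure~\ref{31212} shows that these mirrors in fact preserve $\tau$ itself (not merely the vertex sublattice), so they lie in $\Sym(\calM)$. The translations in $T_\tau$ already partition the flags of $\calM$ into a fixed number of orbits, which can be read off from the fundamental region of $\tau/T_\tau$ as in Figure~\ref{f488}; adjoining $R_1$ and $R_2$, together with the rotations they generate in combination with the translations of $T_\tau$, then collapses the count to exactly three. The main obstacle, as in the $(4.8^2)$ case, is the forward direction: it requires a type-specific case analysis on the vertex figure $(3,12,12)$ to verify that \emph{both} reflections $R_1$ and $R_2$ are genuinely forced (rather than, for example, a single reflection combined with a rotation); once this is pinned down, the converse is essentially the orbit bookkeeping described above.
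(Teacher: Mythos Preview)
Your proposal is correct and follows essentially the same approach as the paper: identify $\tau^*$ as type $\{3,6\}$, argue that three flag orbits force the two reflections $R_1$ (in direction $\mathbf{e_1}+\mathbf{e_2}$, through a triangle and an adjacent 12-gon) and $R_2$ (in direction $\mathbf{e_1}$, through two adjacent 12-gons) to lie in $\Sym(\calM)$, then invoke Subsection~\ref{Subsec:RegChiral}; for the converse, observe that under regularity of $\tau^*/G$ these reflections together with $T_\tau$ collapse the 36 translational flag orbits to three. Your geometric descriptions of the mirror lines are equivalent to the paper's, and your explicit acknowledgment that the forward direction requires ruling out a rotation-plus-single-reflection scenario is a point the paper leaves implicit in its figure.
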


\begin{proof}

Notice again that $\tau^*$ is of type $\{3,6\}$, and let $({\bf e_1},{\bf e_2})$ be the basis for $\tau$.

Assume that a map $\calM$ of the torus of type $(3.12^2)$ has exactly three flag orbits. For this to be the case, there must be
the following symmetries in $\Sym(\calM)$, as shown in Figure~\ref{f3121212}:

\begin{itemize}
\item 
reflection across a line in the direction ${\bf  e_1 + e_2}$ going through the centers of a 12-gon and an adjacent triangle;
\item 
reflection across a line in the direction ${\bf  e_1}$ going through the centers of two adjacent 12-gons. 
 
\end{itemize}
As in the previous proposition, the existence of these symmetries again forces $\tau^*/G$ to be regular.

\begin{figure}[h]
$$\includegraphics[height=50mm, angle =90]{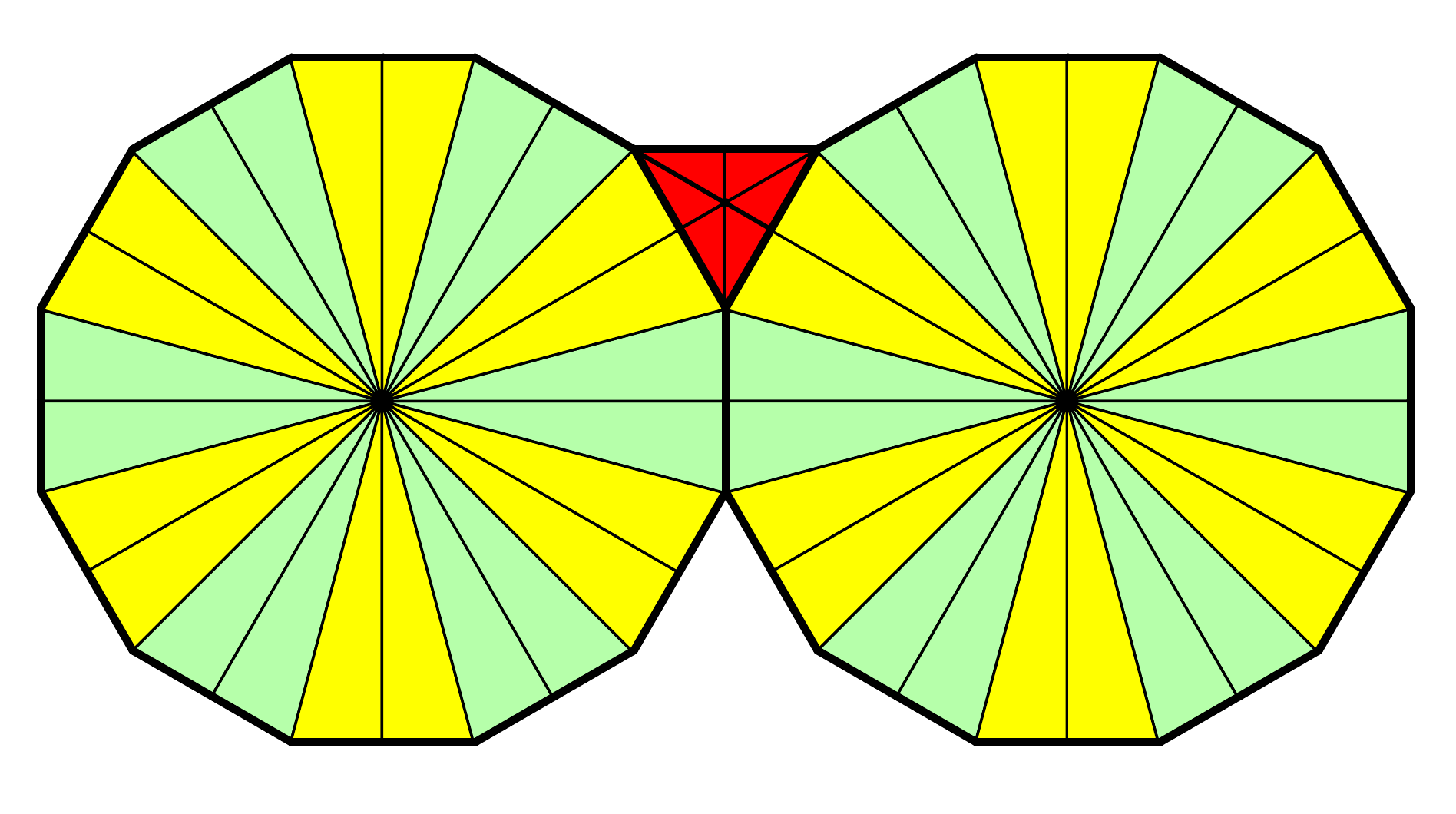} \includegraphics[width=50mm]{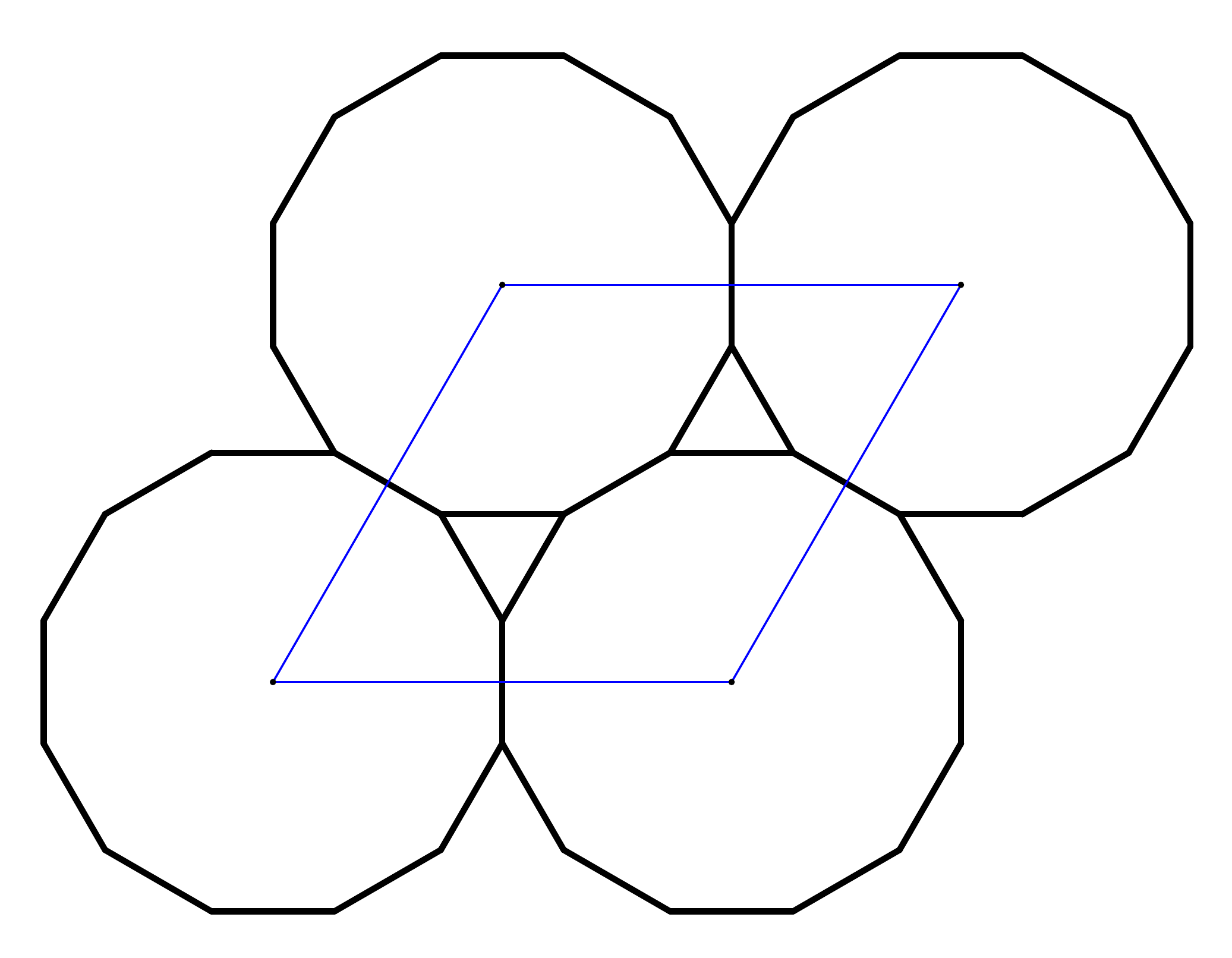}$$
\caption{Minimum number of flag orbits for $\tau$ of type  $(3.12^2)$, and a fundamental region of $\tau/H$.}
\label{f3121212}
\end{figure}

Conversely, if $\tau^*/G$ is regular, then each of the previous three listed symmetries are elements of $\Sym(\calM)$. Furthermore, every translational symmetry in $\Sym(\tau/ T_\tau)$ is also in $\Sym(\tau/G)$, and thus $\calM$ has only three flag orbits. Note that the translations in $\Sym(\tau/ T_\tau)$ act on the flags in 36 flag orbits, and then the listed symmetries force there to only be three orbits.
\end{proof}

\begin{proposition}[Almost regular maps of type $(4.6.12)$]
A map $\calM = \tau/G$ of the torus of type $(4.6.12)$ is almost regular (with six flag orbits) if and only if $\tau^*/G$ is regular.
\end{proposition}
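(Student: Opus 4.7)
The plan is to follow the pattern already established in Proposition~\ref{Prop:4.8^2} and the two propositions immediately preceding. First I would observe that $\tau^*$ is of type $\{3,6\}$, with the same basis $(\mathbf{e_1}, \mathbf{e_2})$ as $\tau$. The strategy is to identify two reflective symmetries whose presence in $\Sym(\calM)$ is equivalent to almost regularity, and then invoke the classification of Subsection~\ref{Subsec:RegChiral}.

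For the forward direction, I would suppose that $\calM = \tau/G$ has exactly six flag orbits and argue that $\Sym(\calM)$ must then contain two reflections: a reflection across a line in the direction $\mathbf{e_1 + e_2}$ passing through the centers of a 12-gon and an adjacent hexagon, and a reflection across a line in the direction $\mathbf{e_1}$ passing through the centers of a 12-gon and an adjacent square. These are precisely the reflections $R_1$ and $R_2$ of Subsection~\ref{Subsec:RegChiral} applied to $\tau^*$ of type $\{3,6\}$, so once they are shown to normalize $G$, the classification there forces $\tau^*/G$ to be regular.

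Conversely, if $\tau^*/G$ is regular then both $R_1$ and $R_2$ normalize $G$ and hence induce automorphisms of $\calM$. Combined with the translations in $T_\tau$, which automatically induce automorphisms of $\calM$, they produce an action on flags with exactly six orbits: translations alone yield $72$ flag orbits (the product of the $6$ planar flag orbits of $(4.6.12)$ and the order $12$ of the point stabilizer of the $p6m$ symmetry group of $\tau$), and adjoining the two reflections together with $\chi$ collapses this count to $6$, matching the minimum.

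The main obstacle I anticipate is confirming geometrically that the two chosen reflections are symmetries of the full Archimedean tessellation $\tau$ (and not merely of $\tau^*$). This amounts to recognizing that in $(4.6.12)$ the shortest translations connect 12-gon centers through intervening squares, so that the reflection axes along $\mathbf{e_1}$ and $\mathbf{e_1+e_2}$ pass through centers of squares and hexagons respectively, and in each case also through centers of 12-gons — these lines are then indeed axes of reflective symmetry of $\tau$. Once this geometric placement is verified, the rest of the argument is a direct application of Subsection~\ref{Subsec:RegChiral} together with the flag count, exactly as in Proposition~\ref{Prop:4.8^2}.
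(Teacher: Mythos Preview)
Your proposal is correct and follows essentially the same approach as the paper's own proof: the same two reflections (in the directions $\mathbf{e_1+e_2}$ and $\mathbf{e_1}$, through the appropriate face centers) are identified, Subsection~\ref{Subsec:RegChiral} is invoked for the $\{3,6\}$ classification, and the flag count $72 \to 6$ is used for the converse. Your added geometric justification for why these axes really are symmetries of $\tau$ is more explicit than what the paper provides.
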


\begin{proof}

Notice again that $\tau^*$ is of type $\{3,6\}$, and let $({\bf e_1},{\bf e_2})$ be the basis for $\tau$.

Assume that a map $\calM$ of the torus of type $(4.6.12)$ has exactly six flag orbits.   For this to be the case, there must be
the following symmetries in $\Sym(\calM)$:
\begin{itemize}
\item 
reflection across a line in the direction ${\bf  e_1 + e_2}$ going through the centers of a 12-gon and an adjacent hexagon;
\item 
reflection across a line in the direction ${\bf  e_1}$ going through the centers of a 12-gon and an adjacent square.

\end{itemize}

As in the previous proposition, the existence of these symmetries again forces $\tau^*/G$ to be regular.  

\begin{figure}[h]
$$\includegraphics[width=50mm]{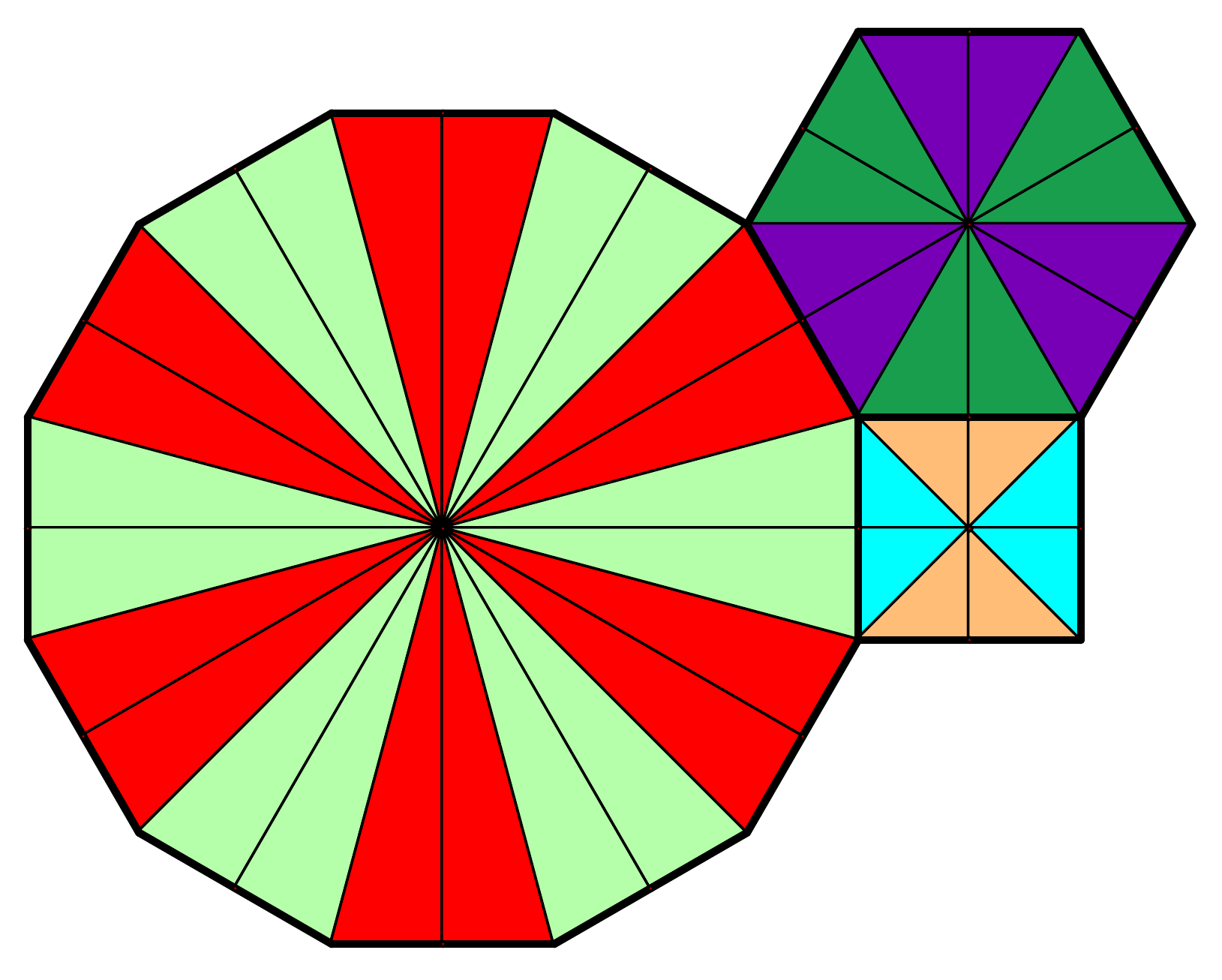} \hspace{1cm} \includegraphics[width=50mm]{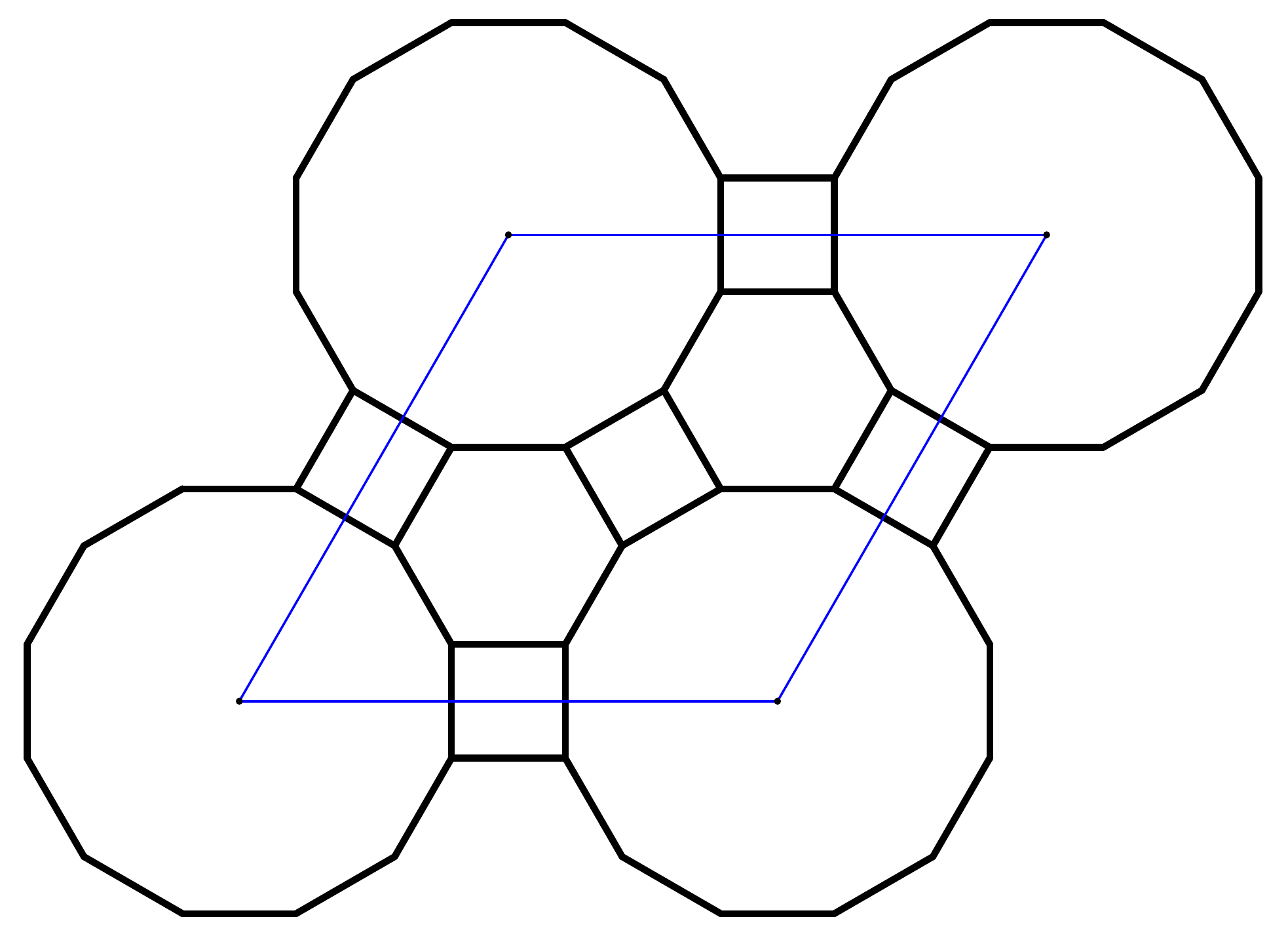}$$
\caption{Minimum number of flag orbits for $\tau$ of type  $(4.6.12)$, and a fundamental region of $\tau/ T_\tau$.}
\label{f4612}
\end{figure}

Conversely, if $\tau^*/G$ is regular, then each of the previous three listed symmetries are elements of $\Sym(\calM)$.  Furthermore, every translational symmetry in $\Sym(\tau/ T_\tau)$ is also in $\Sym(\tau/G)$, and thus $\calM$ has only six flag orbits. Note that the translations in $\Sym(\tau/ T_\tau)$ act on the flags in 72 flag orbits, and then the listed symmetries force there to only be six orbits.
\end{proof}

\begin{proposition}[Almost regular maps of type $(3.4.6.4)$]
A map $M = \tau/G$ of the torus of type $(3.4.6.4)$ is almost regular (with three flag orbits) if and only if $\tau^*/G$ is regular.
\end{proposition}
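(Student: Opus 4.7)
The plan is to follow the template of the four preceding propositions, adapting only the specific geometry to the $(3.4.6.4)$ arrangement.  First I would observe that, since the basis $(\mathbf{e_1},\mathbf{e_2})$ for $\tau$ is the hexagonal basis $((1,0),(1/2,\sqrt{3}/2))$, the associated tessellation $\tau^*$ is of type $\{3,6\}$.  The argument then splits into two implications, linked by the characterization of regular $\{3,6\}$-toroidal maps recalled in Subsection~\ref{Subsec:RegChiral}.

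For the forward implication, I would assume that $\calM = \tau/G$ has exactly three flag orbits and isolate the two reflective symmetries of $\tau$ whose presence in $\Sym(\calM)$ is forced by this minimum count: a reflection across a line in direction $\mathbf{e_1}+\mathbf{e_2}$ through the centers of a hexagon and an incident triangle, and a reflection across a line in direction $\mathbf{e_1}$ through the centers of two adjacent hexagons and the centers of the squares between them.  As explained in Subsection~\ref{Subsec:RegChiral}, together with the central inversion $\chi$ these generate the full point stabilizer $\mathbf{S}$ of the $\{3,6\}$-tessellation $\tau^*$, which normalizes $G$ and therefore makes $\tau^*/G$ regular.

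For the converse, if $\tau^*/G$ is regular then Subsection~\ref{Subsec:RegChiral} places all of $\mathbf{S}$ in the normalizer of $G$ inside $\Aut(\tau^*)$.  I would then check that both reflections named above in fact belong to $\Aut(\tau)$ (they do, since they are among the dihedral-$D_6$ stabilizer of a hexagon center in the $(3.4.6.4)$ pattern), and hence descend to elements of $\Sym(\calM)$.  Combined with the translations in $T_\tau$, which already partition the flags of a fundamental region of $\tau/T_\tau$ into a computable number of orbits, these reflections reduce the flag-orbit count of $\calM$ to exactly three, matching the planar tessellation.  A diagram analogous to Figures~\ref{f488}--\ref{f4612}, showing a colored vertex star of $\tau$ along with a fundamental region for $\tau/T_\tau$, would accompany the proof.

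The main obstacle, and the reason each of these propositions carries its own figure, is the geometric verification: one must identify reflection axes that pass through the correct centers of faces in the $(3.4.6.4)$ arrangement so that they lie in both $\Aut(\tau)$ and $\Aut(\tau^*)$, and then confirm by a local flag-coloring argument that together with $T_\tau$ they collapse all translational flag orbits down to precisely three.  Since the vertex stabilizer of $(3.4.6.4)$ inside $\Aut(\tau)$ is only of order two (the reflection swapping the two incident squares), some care is needed to express the required reflections as ones whose axes pass through the origin up to a translation in $T_\tau$, so that the framework of Subsection~\ref{Subsec:RegChiral} applies cleanly.
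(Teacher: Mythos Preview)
Your approach mirrors the paper's own proof: identify $\tau^*$ as type $\{3,6\}$, extract the two reflections (in the directions $\mathbf{e_1}+\mathbf{e_2}$ and $\mathbf{e_1}$) whose presence in $\Sym(\calM)$ is forced by minimality of the flag-orbit count, invoke Subsection~\ref{Subsec:RegChiral} in both directions, and for the converse combine these reflections with $T_\tau$ to collapse the translational flag orbits. One caveat worth flagging: the correct minimum number of flag orbits for $(3.4.6.4)$ is four, not three---the paper's own proof uses four throughout (48 translational orbits collapsing to four under the order-$12$ point stabilizer), so the parenthetical ``three'' in the statement is a typo that your write-up has inherited and that you will discover as soon as you carry out the flag-coloring verification.
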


\begin{proof}

Notice again that $\tau^*$ is of type $\{3,6\}$. Let $({\bf e_1},{\bf e_2})$ be the basis for $\tau$.
Assume that a map $\calM$ of the torus of type $(3.4.6.4)$ has exactly four flag orbits.   For this to be the case, there must be
the following symmetries in $\Sym(\calM)$:
\begin{itemize}
\item 
reflection across a line in the direction ${\bf  e_1 + e_2}$ going through the centers of a hexagon and a triangle sharing an incident vertex;
\item 
reflection across a line in the direction ${\bf  e_1}$ going through the centers of a 12-gon and an adjacent square.

\end{itemize}

\begin{figure}[h]
$$\includegraphics[width=50mm]{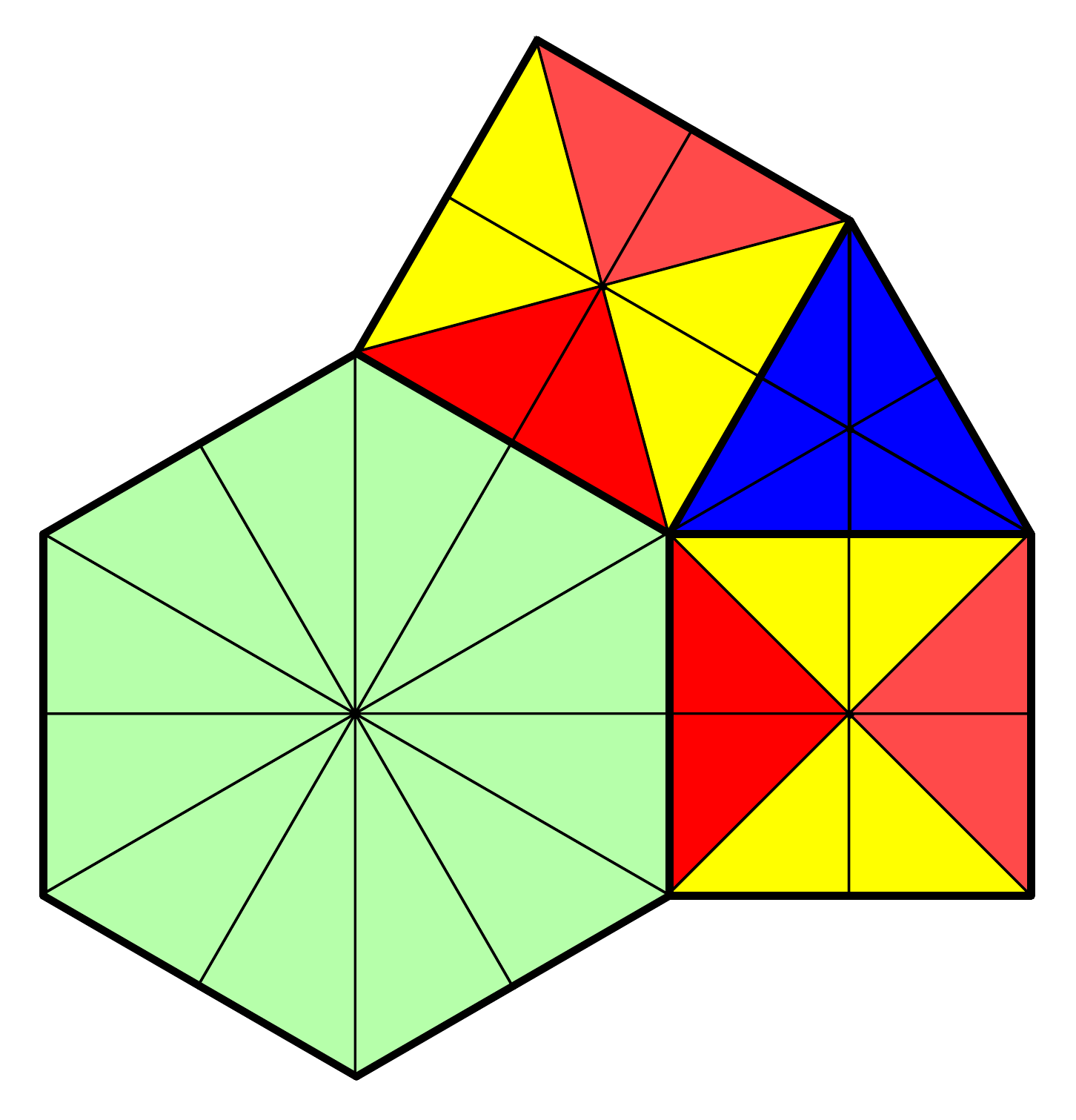} \hspace{1cm} \includegraphics[width=50mm]{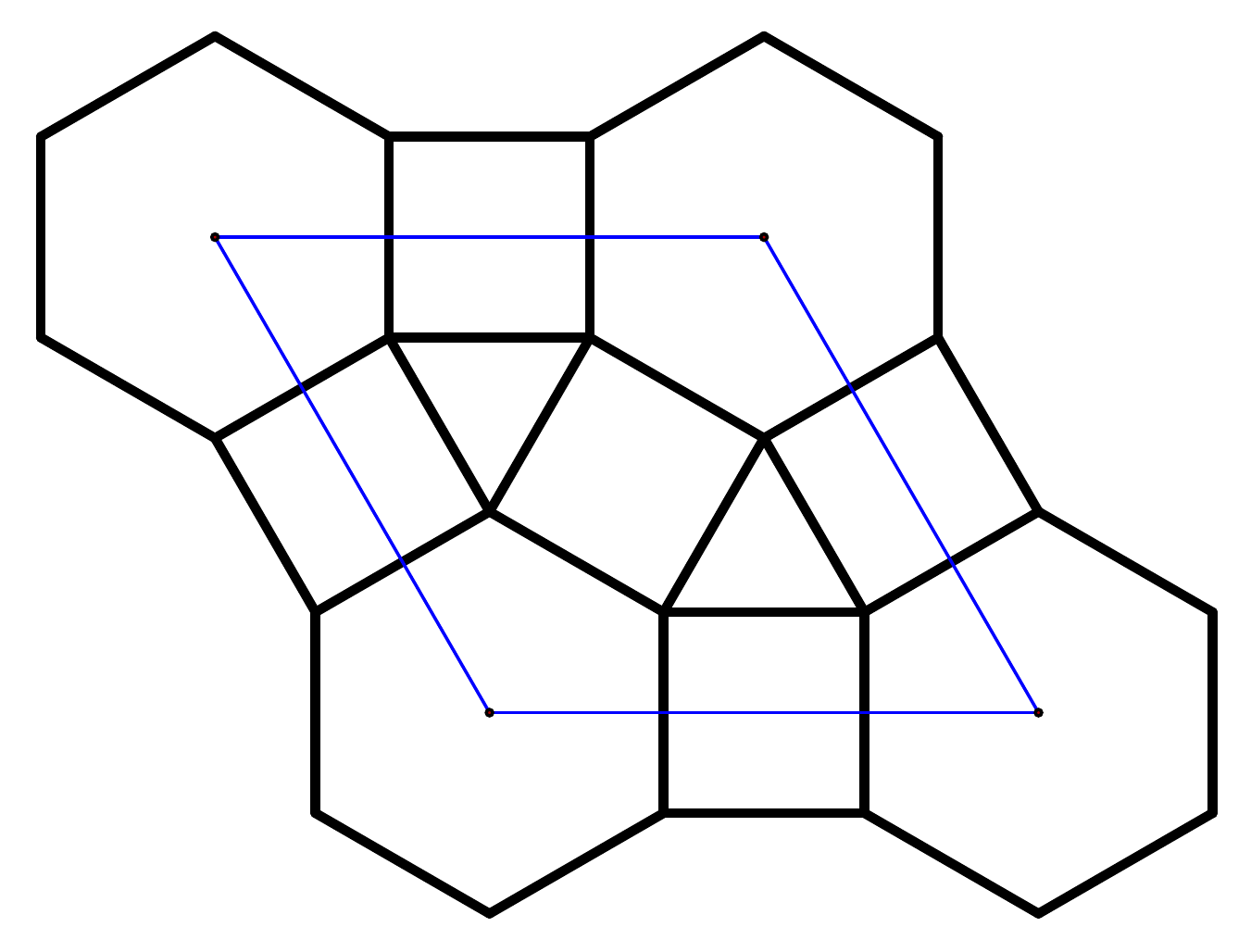}$$
\caption{Minimum number of flag orbits for $\tau$ of type  $(3.4.6.4)$, and a fundamental region of $\tau/ T_\tau$.}
\label{f3464}
\end{figure}

Again the existence of these symmetries again forces $\tau^*/G$ to be regular. Conversely, if $\tau^*/G$ is regular, then each of the previous three listed symmetries are elements of $\Sym(\calM)$.  Furthermore, every translational symmetry in $\Sym(\tau/ T_\tau)$ is also in $\Sym(\tau/G)$, and thus $\calM$ has only four flag orbits. Note that the translations in $\Sym(\tau/ T_\tau)$ act on the flags in 48 flag orbits, and then the listed symmetries force there to only be four orbits.
\end{proof}

\begin{proposition}[Almost regular maps of type $(3^2.4.3.4)$]
A map $\calM = \tau/G$ of the torus of type $(3^2.4.3.4)$ is almost regular (with five flag orbits) if and only if $\tau^*/G$ is regular.
\end{proposition}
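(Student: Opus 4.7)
The plan is to follow the template of the preceding propositions by characterizing almost regularity in terms of specific symmetries required in $\Sym(\calM)$. First I would verify that the planar tessellation $\tau$ of type $(3^2.4.3.4)$ has exactly five flag orbits under $\Aut(\tau)$: the vertex figure $3.3.4.3.4$ admits a single reflection (fixing the edge shared by two adjacent triangles) and no non-trivial rotation, so the vertex stabilizer has order two; the $2 \cdot 5 = 10$ flags at each vertex then split into $5$ orbits, and vertex-transitivity yields five flag orbits overall. Consequently ``almost regular'' for this type means exactly five flag orbits, which is achieved precisely when $\Sym(\calM) = \Aut(\tau)$, i.e.\ when the full $\Aut(\tau)$ normalizes $G$.

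For the forward direction, I would point to two symmetries of $\tau$ whose presence in $\Sym(\calM)$ suffices to force $\tau^*/G$ to be regular: a reflection through a vertex with axis along the edge shared by two adjacent triangles (in a diagonal direction such as ${\bf e_1} - {\bf e_2}$); and a $4$-fold rotation about the center of a square of the map, or equivalently a glide reflection in the direction of ${\bf e_1}$ through the midpoints of these shared triangle-triangle edges. The linear part of the first symmetry equals $\chi R_1$ and that of the rotation equals $R_1 R_2$, where $R_1, R_2$ are the reflections of the $\{4,4\}$ setup from Subsection~\ref{Subsec:RegChiral}. Together with $\chi$ (which automatically normalizes any translation sublattice), these linear parts generate the full point group $\langle R_1, R_2 \rangle$; hence both $R_1$ and $R_2$ preserve $G$, which by Subsection~\ref{Subsec:RegChiral} is exactly the condition for $\tau^*/G$ of type $\{4,4\}$ to be regular.

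For the converse, if $\tau^*/G$ is regular, then $\langle R_1, R_2 \rangle$ preserves $G$. Since every element of $\Aut(\tau)$ has linear part in $\langle R_1, R_2 \rangle$, and $T_\tau$ always normalizes $G$, all of $\Aut(\tau)$ normalizes $G$. This forces $\Sym(\calM) = \Aut(\tau)$, and hence $\calM$ attains the minimum of five flag orbits.

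The main subtlety distinguishing this case from the previous five propositions is that the vertex stabilizer of $\tau$ contains only one reflection. A single reflection through a vertex contributes only one generator of the full point group (namely $R_1$, up to $\chi$), so a second symmetry of a different kind---a rotation about a square center, or a glide reflection along an axial direction---is needed to capture the remaining generator. Isolating this second symmetry, and verifying that its linear part together with that of the vertex reflection spans the full $\langle R_1, R_2 \rangle$, is where most of the care in the proof lies.
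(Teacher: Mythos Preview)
Your proposal is correct and follows essentially the same approach as the paper: both identify a $\pi/2$ rotation about a square center (linear part $R_1R_2$) together with a diagonal reflection along the shared edge of two adjacent triangles as the symmetries that force the full point stabilizer of $\{4,4\}$ to normalize $G$, and both handle the converse by observing that regularity of $\tau^*/G$ makes every element of $\Aut(\tau)$ descend to $\calM$. The paper states the reflection axis in the direction ${\bf e_1+e_2}$ rather than ${\bf e_1-e_2}$, but since the two differ only by $\chi$ in their linear parts this is immaterial; your added remark that the vertex stabilizer has order two, and hence a second symmetry of a different geometric type is genuinely needed, makes explicit a subtlety the paper leaves implicit.
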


\begin{proof}

Notice again that $\tau^*$ is of type $\{4,4\}$. Let $({\bf e_1},{\bf e_2})$ be the basis for $\tau$.

Assume that a map $\calM$ of the torus of type $(3^2.4.3.4)$ has exactly five flag orbits. For this to be the case, there must be
a rotation by ${\pi}/{2}$ around the center of a square in $\Sym(\calM)$.  This symmetry can be represented by $R_1R_2$ as described in Subsection~\ref{Subsec:RegChiral}, and thus $\tau^*/G$ is either regular or chiral.  However, $\Sym(\calM)$ must also contain a reflection across the edge of adjacent triangles in the direction of ${\bf  e_1 + e_2}$.  This means that the 8 flags in the fundamental region of $\tau^*/ T_\tau$ are all in the same orbit, and thus $\tau^*/G$ is regular.  

\begin{figure}[h]
$$\includegraphics[width=50mm]{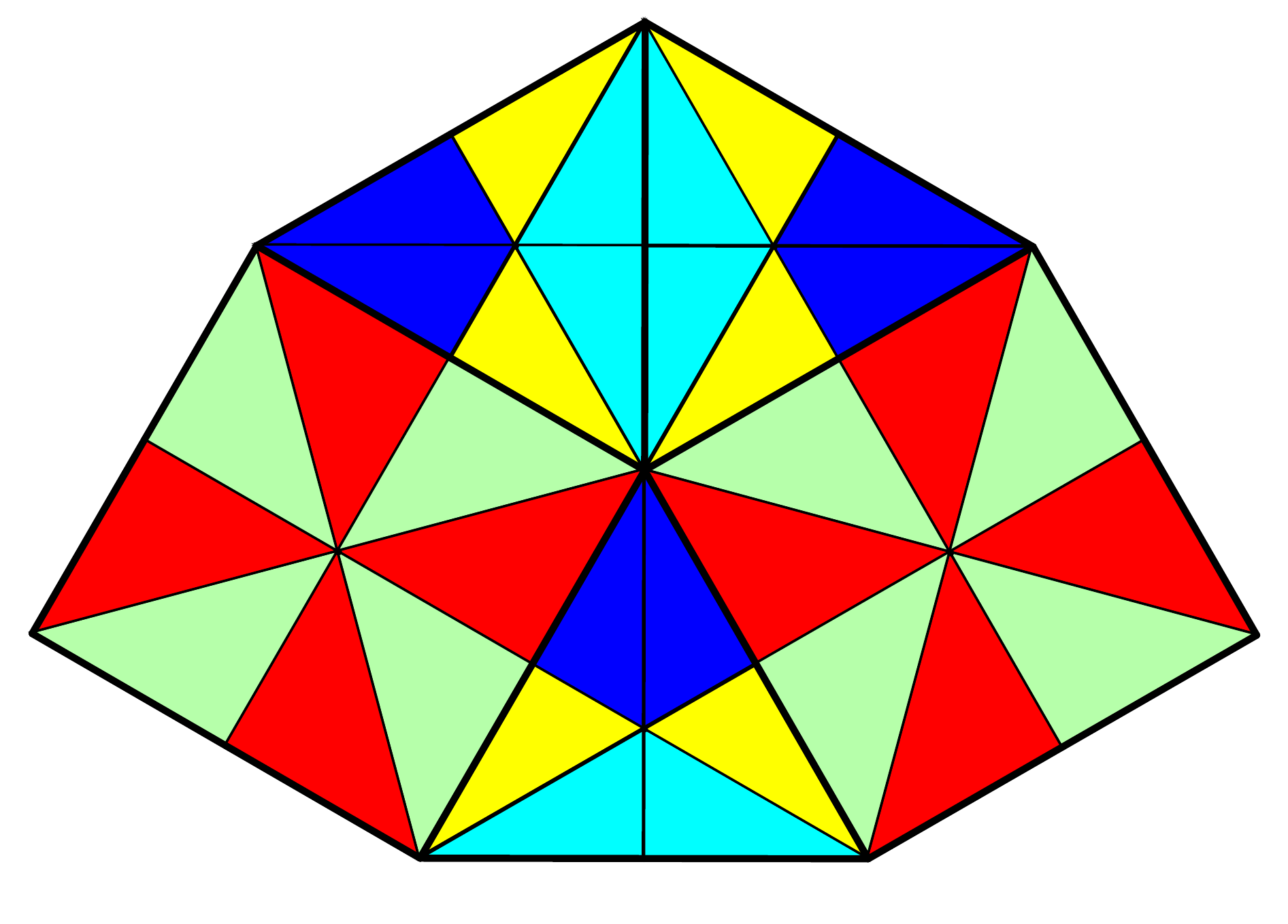} \hspace{1cm} \includegraphics[width=40mm, angle=90]{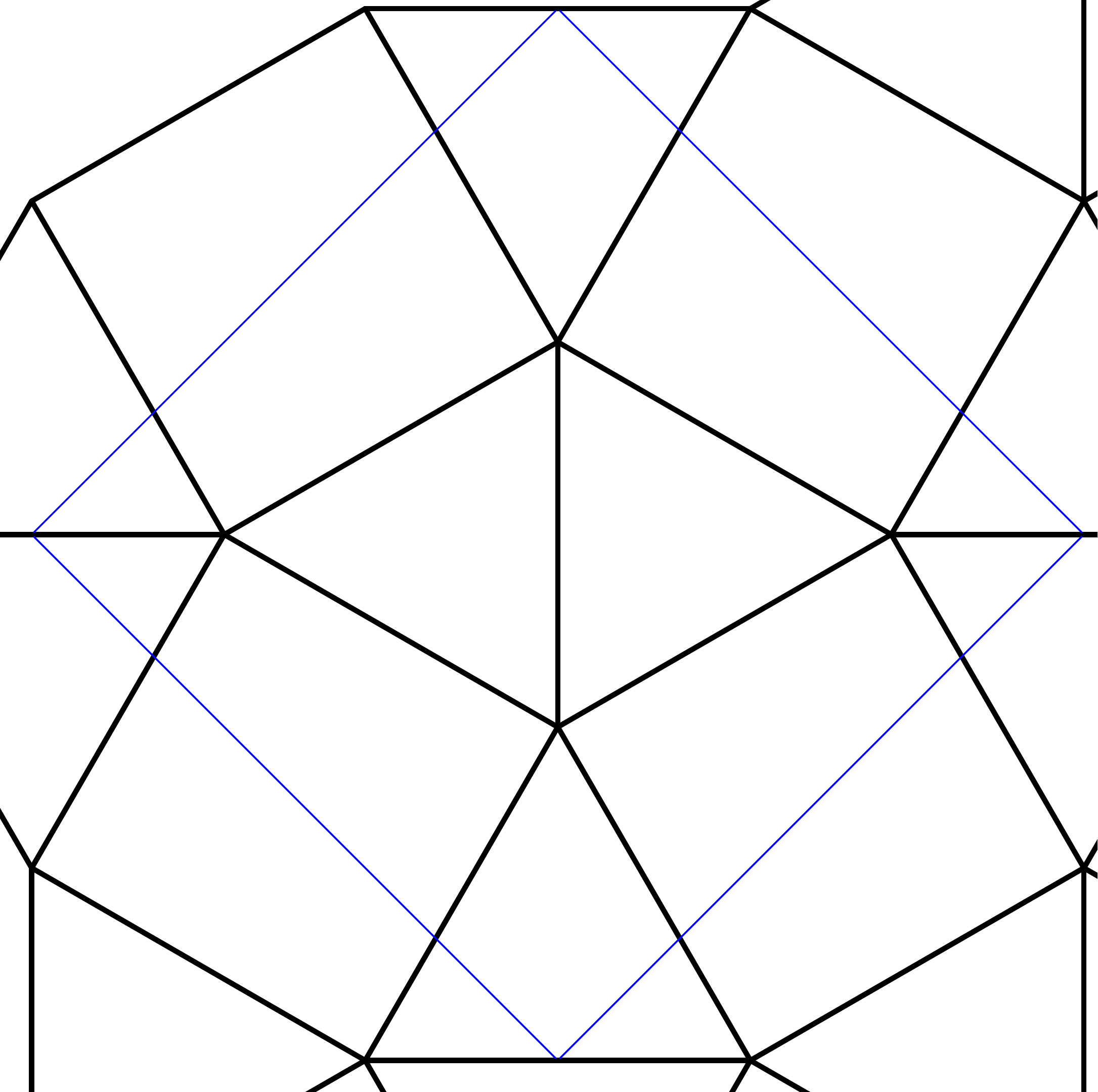}  \hspace{1cm} \includegraphics[width=40mm]{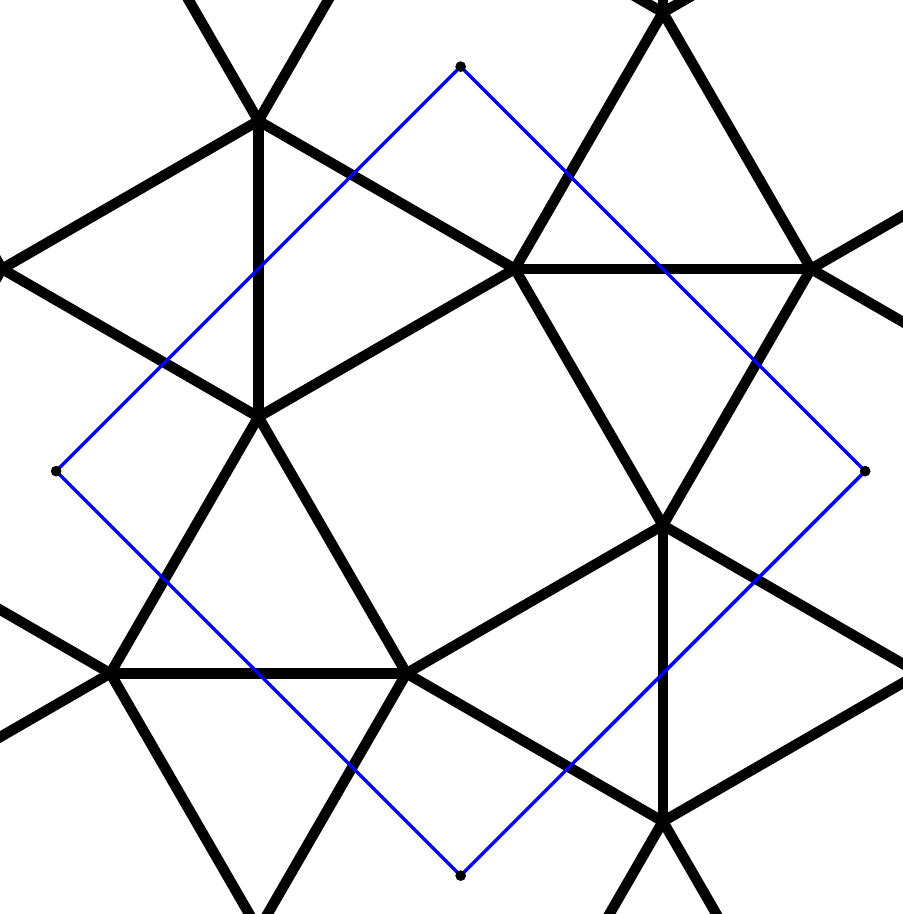}$$

\caption{Minimum number of flag orbits for $\tau$ of type  $(3^2.4.3.4)$, and a fundamental region of $\tau/ T_\tau$ drawn in two equivalent ways so that to show existence of mirror symmetries (second picture) and rotational symmetry (third picture).}
\label{f33434}
\end{figure}

Conversely, if $\tau^*/G$ is regular, then there is a rotation by ${\pi}/{2}$ around the center of a square, as well as a reflection across the edge of adjacent triangles in $\Sym(M)$. Furthermore, every translational symmetry in $\Sym(\tau/ T_\tau)$ is also in $\Sym(\tau/G)$, and thus $M$ has only five flag orbits.
\end{proof}

\begin{theorem}[Rotary to almost regular toroidal map]
\label{Thm:RotAlmostRegular}
Let $\tau$ be the Archimedean tessellation of type $(3^4.6)$. Then $\tau_{\bold{u}, \bold{v}}$ is an almost regular Archimedean map (with ten flag orbits) if and only if $\tau^*_{\bold{u}, \bold{v}}$ is a rotary map on the torus. 
\end{theorem}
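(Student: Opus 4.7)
The plan is to follow the template of Proposition~\ref{Prop:4.8^2} and its companions, but with a twist that accounts for the chirality of the tessellation $\tau = (3^4.6)$. Unlike the tessellations in $\mathcal A_{\text{reg}}$, the symmetry group $\Aut(\tau)$ is the wallpaper group $p6$ and admits \emph{no} reflections. I would first record the structural facts needed: $\tau^*$ is of type $\{3,6\}$ (the triangular tessellation on the lattice generated by ${\bf e_1},{\bf e_2}$, which is $T_\tau$ by the paper's conventions), so $\Aut(\tau) \subsetneq \Aut(\tau^*)$; and the stabilizer of a vertex of $\tau$ in $\Aut(\tau)$ is trivial, because the local configuration of four triangles and one hexagon around the vertex admits no nontrivial rotation fixing it. Consequently, the planar tessellation $\tau$ has exactly ten $\Aut(\tau)$-orbits on flags --- one for each of the ten flags incident to the origin.

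Next, I would reformulate the almost regular condition algebraically. Since $G \leq T_\tau \leq \Aut(\tau)$, the number of flag orbits of $\calM = \tau/G$ under $\Aut(\calM) \cong \Norm_{\Aut(\tau)}(G)/G$ equals the number of $\Norm_{\Aut(\tau)}(G)$-orbits on the flags of $\tau$, and this number is always at least ten. Equality --- that is, the almost regular condition --- holds precisely when $\Norm_{\Aut(\tau)}(G) = \Aut(\tau)$, i.e., when $G \trianglelefteq \Aut(\tau)$. Because the conjugate $\gamma\, t_{\bf v}\, \gamma^{-1}$ of a translation depends only on the linear part of $\gamma$, normality of $G$ in $\Aut(\tau)$ is equivalent to invariance of $G$ under the cyclic group of order six generated by rotation by $\pi/3$ (realized in $\Aut(\tau)$ by the rotation about any hexagon center of $\tau$).

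Then I would translate the rotary condition on $\tau^*/G$ via Subsection~\ref{Subsec:RegChiral}: for type $\{3,6\}$, the map $\tau^*/G$ is rotary precisely when its point stabilizer ${\bf K'}$ contains $R_1 R_2$, the rotation by $\pi/3$ about the origin --- equivalently, when this rotation normalizes $G$. Since rotations by $\pi/3$ about the origin and about a hexagon center differ only by a translation in $T_\tau$, they induce identical conjugation on the translation subgroup $G$. Thus both conditions --- almost regularity of $\calM$ and rotariness of $\tau^*/G$ --- reduce to the same criterion: $G$ is invariant under rotation by $\pi/3$. The two implications of the theorem follow at once.

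The main subtlety lies in identifying the linear parts of $\Aut(\tau)$ and, in particular, in observing that none of them is a reflection; this is exactly what forces the quotient $\tau^*/G$ to be allowed to be merely chiral rather than necessarily regular, in contrast with Theorem~\ref{Thm:RegAlmostRegular}. A figure in the style of Figures~\ref{f488}--\ref{f33434}, showing representatives of the ten flag orbits at a vertex of $\tau$ together with a fundamental region of $\tau/T_\tau$, would accompany the argument as a visual aid.
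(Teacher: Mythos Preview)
Your argument is correct and follows essentially the same line as the paper's: both directions reduce to the single criterion that $G$ be invariant under rotation by $\pi/3$, with your write-up making the normalizer and point-group bookkeeping more explicit than the paper's terse version. One small caveat: the assertion that the rotation about the origin and the rotation about a hexagon center ``differ only by a translation in $T_\tau$'' is not obviously true with the origin placed at a \emph{vertex} of $\tau$, but it is also unnecessary---you have already noted that conjugation of a translation depends only on the linear part of $\gamma$, and that alone gives the identification you need.
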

\begin{proof}

Suppose that the map $\tau_{\bold{u}, \bold{v}}$ has exactly ten flag orbits. For this to be the case, there must be a rotation by ${\pi}/{3}$ around the center of a hexagon in $\Sym(\tau_{\bold{u}, \bold{v}})$. The existence of this symmetry forces $\tau^*/\left<\bold{u}, \bold{v}\right>$ to be rotary. Note that no additional reflexive symmetries are in $\Sym(\tau_{\bold{u}, \bold{v}})$.  

\begin{figure}[h]
$$\includegraphics[width=50mm]{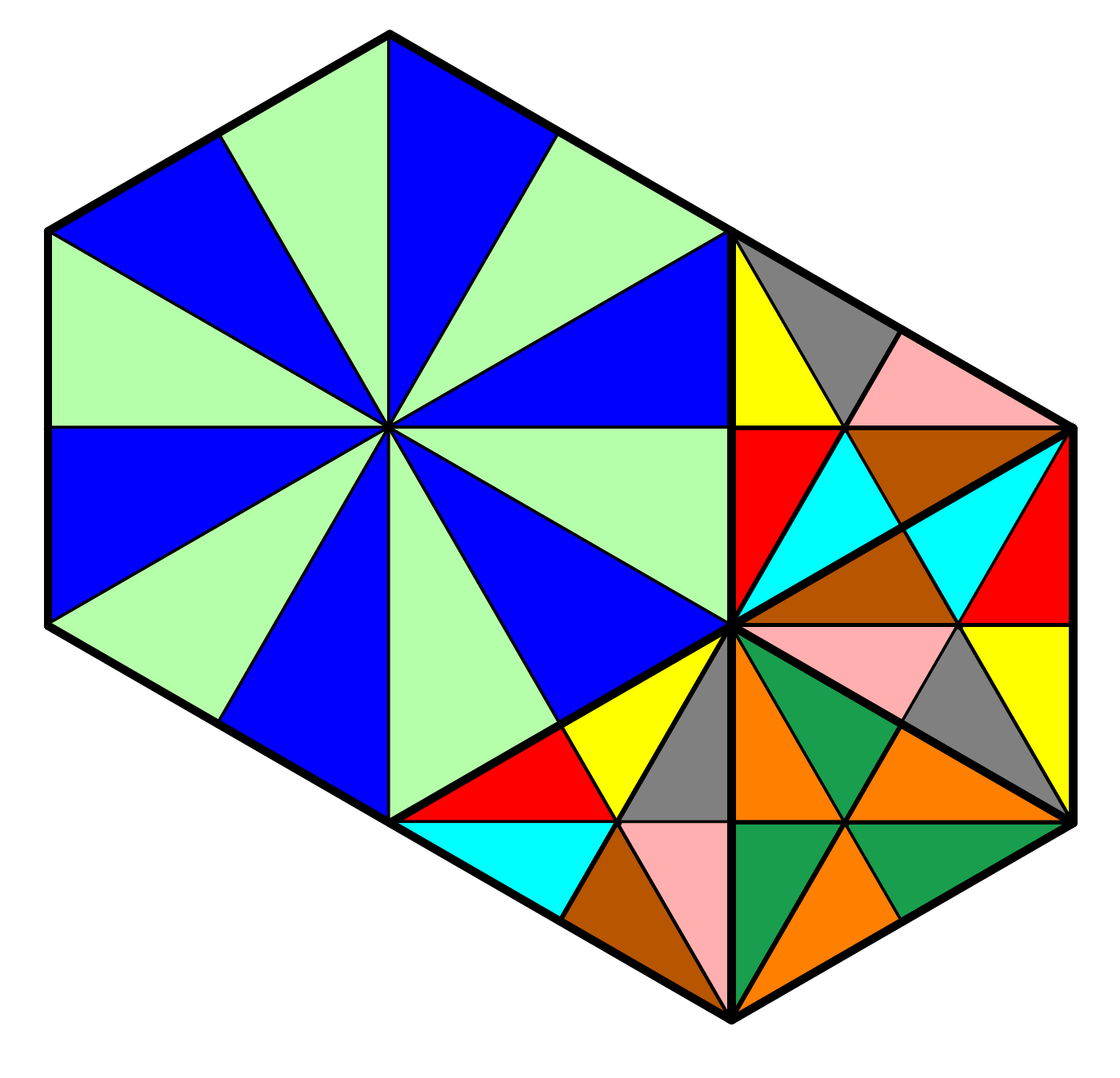} \hspace{1cm} \includegraphics[width=50mm]{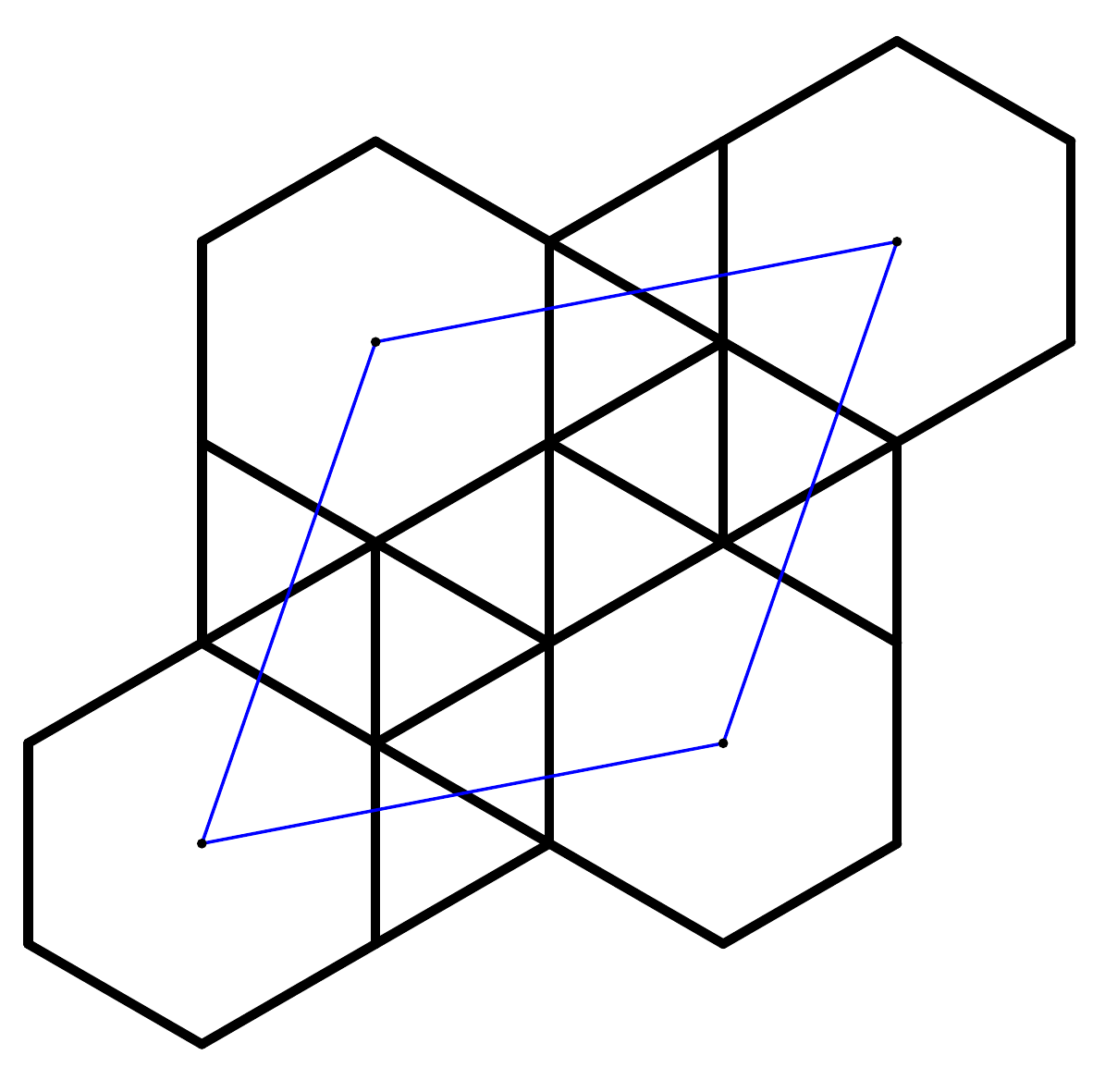}$$
\caption{ Minimum number of flag orbits for $\tau$ of type  $(3^4.6)$, and a fundamental region of $\tau/H$.}
\label{f33336}
\end{figure}

Conversely, if $\tau^*_{\bold{u}, \bold{v}}$ is rotary, then there is a rotation by ${\pi}/{3}$ around the center of a hexagon in $\Sym(\tau_{\bold{u}, \bold{v}})$. Furthermore, every translational symmetry in $\Sym(\tau/ T_\tau)$ is also in $\Sym(\tau_{\bold{u}, \bold{v}})$, and thus $\tau_{\bold{u}, \bold{v}}$ has only ten flag orbits.  Note that the translations in $\Sym(\tau/ T_\tau)$ act on the flags in 60 flag orbits, and then the listed symmetry forces there to only be ten orbits.
\end{proof}

Theorems~\ref{Thm:RegAlmostRegular} and~\ref{Thm:RotAlmostRegular} provide us with a fair understanding of how almost regular maps of type $\mathcal A_{\text{reg}}$ and $(3^4.6)$ look: for each of them the associated map on the torus must be regular, respectfully rotary.

The only remaining Archimedean tessellation not covered by the previous two results is $(3^3.4^2)$. Since the translation subgroup of the symmetry group of $(3^3.4^2)$ does not coincide with the symmetry group of one of the regular planar tessellations (as it is for all tessellations in $\mathcal A_{\text{reg}}$), or does not contain the rotation subgroup of a regular planar tessellation (as it is for the tessellation $(3^4.6)$), we have to deal with $(3^3.4^2)$ separately and with different techniques. 

In order to state a complete characterization of almost regular maps of type $(3^3.4^2)$, we introduce the following notation: write $(\bold{e_1}, \bold{e_2})$ for the positively-oriented basis of the plane $\mathbb E^2$ represented by the shortest non-parallel translations that are in the symmetry group of $(3^3.4^2)$. Recall also that $T_\tau$ stands for the maximal translation subgroup of the symmetry group $\Sym(\tau)$ of a given Archimedean tessellation $\tau$. 

\begin{theorem}[Almost regular maps of type $(3^3.4^2)$]
\label{Thm:AlmostRegular33344}
Let $\tau = (3^3.4^2)$. Then $\calM = \tau / G$, with $G < T_\tau$, is an almost regular Archimedean map (with five flag orbits) if and only if the translation subgroup $G$ is of the form 
\begin{equation}
\label{Eq:TwoTypes}
\big<c \bold{e_1}, -d \bold{e_1} + 2d \bold{e_2}\big>, \text{ or }\big<c \bold{e_1}, -d \bold{e_1} + (c+2d) \bold{e_2}\big>
\end{equation}
for non-zero integers $c$ and $d$.
\end{theorem}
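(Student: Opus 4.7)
My plan is to parallel the previous propositions, but since $(3^3.4^2)$ has no convenient associated regular tessellation, to work directly with the point group of $\Aut(\tau)$ and classify the relevant sublattices by hand. First, I identify the vertex stabilizer in $\Aut(\tau)$ for $\tau=(3^3.4^2)$. The cyclic vertex figure $3,3,3,4,4$ admits a unique non-trivial symmetry, namely the reflection $\sigma$ swapping the two squares and fixing the middle triangle. Placing a vertex at the origin with $\sigma$'s axis along the $y$-direction gives $\sigma(x,y)=(-x,y)$; so in the basis $(\bold{e_1},\bold{e_2})=((1,0),\,(1/2,\,1+\sqrt{3}/2))$ one has $\sigma(\bold{e_1})=-\bold{e_1}$ and $\sigma(\bold{e_2})=-\bold{e_1}+\bold{e_2}$. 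The tessellation has two $T_\tau$-orbits of vertices (distinguished by whether the three triangles appear ``above'' or ``below''), and every symmetry of $\tau$ whose linear part is $-I$ or $\sigma_h$ sends the origin into the \emph{other} $T_\tau$-orbit, hence outside $T_\tau$. Consequently the stabilizer of the origin in $\Aut(\tau)$ is exactly $\{I,\sigma\}$, accounting for the $10/2=5$ flag orbits of the planar tessellation.

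For any $G\leq T_\tau$, the only element of $\Aut(\tau)$ that stabilizes the image of the origin in $\tau/G$, apart from elements of $G$ itself, is the one induced by $\sigma$: translations in $G$ act trivially on $\tau/G$, while every element of $\Aut(\tau)\setminus T_\tau$ other than $\sigma$ sends the origin outside $T_\tau$ and hence outside $G$. Applying the standard orbit count, the induced stabilizer of the origin in $\Aut(\tau/G)=\Norm_{\Aut(\tau)}(G)/G$ is $\{I,\sigma\}$ (giving $5$ flag orbits) if $\sigma G\sigma^{-1}=G$, and is trivial (giving $10$ orbits) otherwise. Thus $\tau/G$ is almost regular if and only if $G$ is $\sigma$-invariant.

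It remains to classify the $\sigma$-invariant rank-$2$ sublattices of $T_\tau=\mathbb{Z}\bold{e_1}\oplus\mathbb{Z}\bold{e_2}$. For $\bold{v}=x\bold{e_1}+y\bold{e_2}\in G$, invariance gives $\bold{v}-\sigma(\bold{v})=(2x+y)\bold{e_1}\in G$, so $G\cap\mathbb{Z}\bold{e_1}$ is non-trivial; let $c\bold{e_1}$, $c>0$, be its generator. Choosing $\bold{b}=e\bold{e_1}+f\bold{e_2}\in G$ with $f>0$ minimal gives the Hermite-type basis $G=\langle c\bold{e_1},\bold{b}\rangle$, and invariance collapses to $\sigma(\bold{b})-\bold{b}=-(2e+f)\bold{e_1}\in G$, i.e.\ the single congruence $c\mid 2e+f$. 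Writing $2e+f=kc$ and using the freedom to replace $\bold{b}$ by $\bold{b}-k'(c\bold{e_1})$, I absorb multiples of $c$ into $e$ and split by the parity of $k$. If $k=2k'$ is even, then $f=2(k'c-e)$ and setting $d=k'c-e$ transforms $\bold{b}$ into $-d\bold{e_1}+2d\bold{e_2}$, the first family of~(\ref{Eq:TwoTypes}); if $k=2k'+1$ is odd, the analogous substitution yields $\bold{b}=-d\bold{e_1}+(c+2d)\bold{e_2}$, the second family. Conversely, both forms are $\sigma$-invariant by direct check: $\sigma(-d\bold{e_1}+2d\bold{e_2})=-d\bold{e_1}+2d\bold{e_2}$, and $\sigma(-d\bold{e_1}+(c+2d)\bold{e_2})=-(c+d)\bold{e_1}+(c+2d)\bold{e_2}$, which differs from the original generator by $-c\bold{e_1}\in G$.

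The main obstacle, as flagged in the paragraph preceding the theorem, is the absence of a regular companion tessellation that controls the symmetry; the point stabilizer is of order only $2$, so the well-developed $\{4,4\}$- and $\{3,6\}$-machinery of Subsection~\ref{Subsec:RegChiral} cannot be invoked. The arithmetic heart of the proof is the parity split on the congruence $c\mid 2e+f$, which partitions the $\sigma$-invariant sublattices into the two explicit families of~(\ref{Eq:TwoTypes}); minor bookkeeping remains to rule out the degenerate cases ($d=0$ in the first family or $c+2d=0$ in the second) in which the generators fail to be linearly independent.
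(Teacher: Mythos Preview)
Your argument is correct and reaches the same classification, but the organization genuinely differs from the paper's. The paper works with \emph{two} reflections $\bold{h}$ and $\bold{v}$ through centres of squares and requires both to normalize $G$; it then picks the second generator $\bold d$ in a half-open strip $d_1\in[-d_2/2,\,c_1-d_2/2)$ and uses minimality of $c_1$ together with $R_h$ to force $2d_1+d_2\in\{0,c_1\}$, which yields the two families directly. You instead observe that the linear parts satisfy $R_h=-R_v$, so a \emph{single} reflection $\sigma$ (whose linear part coincides with the paper's $R_v$) already suffices, since $-I$ normalizes every sublattice. For the lattice side you pass to Hermite normal form and split on the parity of $(2e+f)/c$. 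This buys you a cleaner one-congruence criterion and avoids the interval normalization; the paper's approach, by contrast, keeps the geometry of both mirror families visible and ties the case split to the figures.

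Two points deserve tightening. First, the sentence ``every element of $\Aut(\tau)\setminus T_\tau$ other than $\sigma$ sends the origin outside $T_\tau$'' is false as written: any $t\sigma$ with $t\in T_\tau$ sends $0$ to $t\in T_\tau$. What you need (and use) is that every element whose \emph{linear part} is $-I$ or $R_h$ moves the origin into the other $T_\tau$-orbit of vertices. Second, your passage from ``vertex stabilizer of order $2$'' to ``$5$ flag orbits'' silently uses that $\Aut(\tau/G)$ is vertex-transitive; this holds precisely because a half-turn (linear part $-I$) always normalizes $G$ and interchanges the two $T_\tau$-classes of vertices, but it should be said explicitly---the paper sidesteps this by counting flag orbits directly from the index $[\Aut(\tau):\Norm_{\Aut(\tau)}(G)]$ via the figures rather than through a vertex stabilizer.
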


\begin{remark}
Observe that the statement above, in fact, does not depend on the choice of the basis for $\mathbb E^2$. The explicit coordinate form (\ref{Eq:TwoTypes}) was used only to simplify later search for minimal almost regular covers in the proof of Theorem~\ref{Thm:33344}.
\end{remark}

\begin{remark}
The groups listed in (\ref{Eq:TwoTypes}) are never isomorphic. 
\end{remark}

\begin{proof}
The beginning of proof is similar to the proofs given for the tessellations in $\mathcal A \setminus \{(3^3.4^2)\}$. Assume that a map $\calM$ has exactly five flag orbits. For this to be the case, there must be the following symmetries in $\Sym(\calM)$:
\begin{itemize}
\item 
reflections across a horizontal line through the center of a square (see Figure~\ref{sh}); 
\item 
reflections across a vertical line through the center of a square (see Figure~\ref{sv}). 
\end{itemize}
We will write $\bold{h}$, respectively $\bold{v}$, for a reflection across a horizontal, resp.\,vertical, line through the center of a square, where a horizontal line is in the direction of $ \bold{e_1}$.

\begin{figure}[ht]
\begin{multicols}{3}
\hfill
\includegraphics[width=40mm]{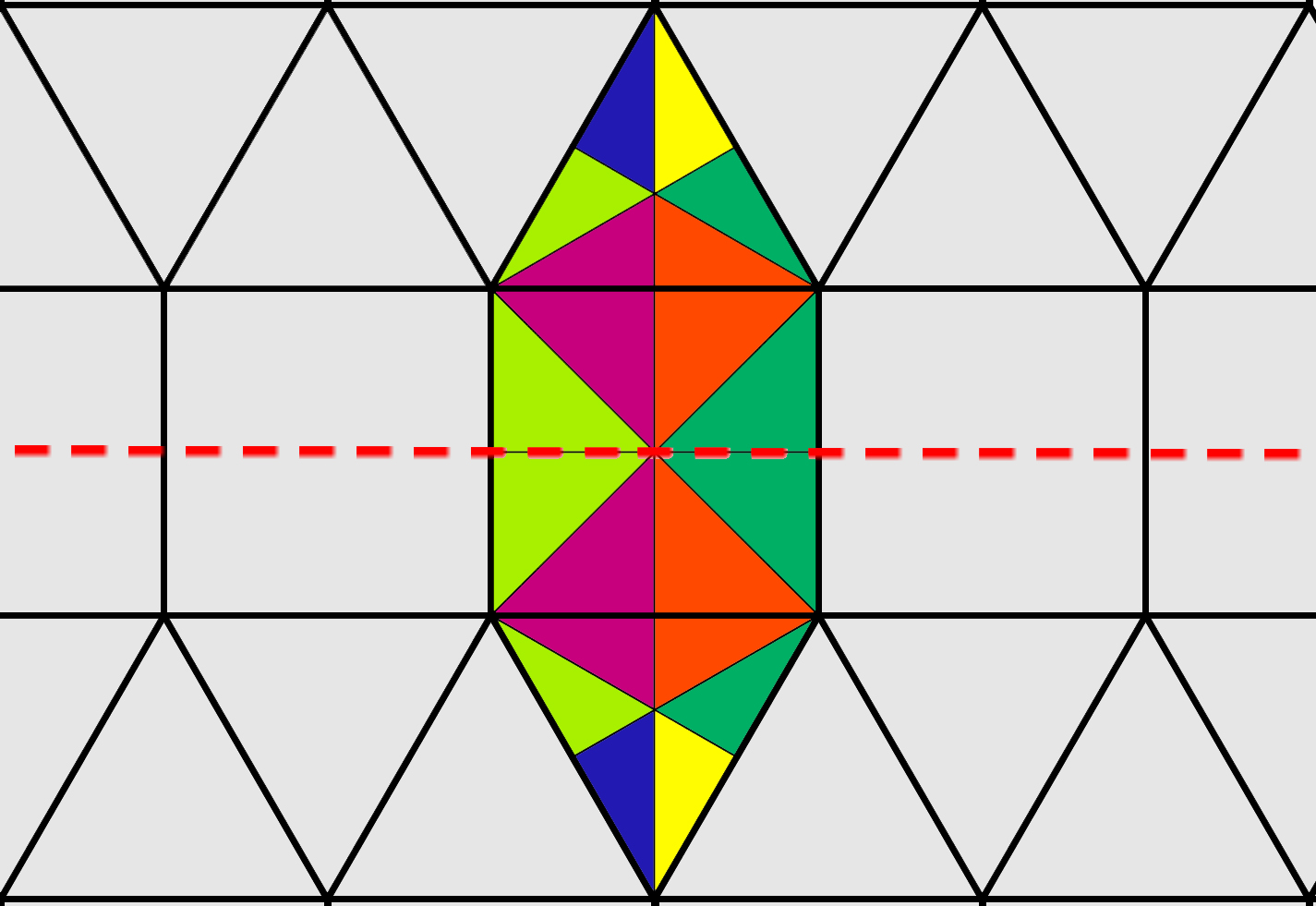}
\hfill
\caption{Reflection across the horizontal line; 10 flag orbits in the fundamental region of $(3^3.4^2) / T_{(3^3.4^2)}$.}
\label{sh}
\hfill
\includegraphics[width=40mm]{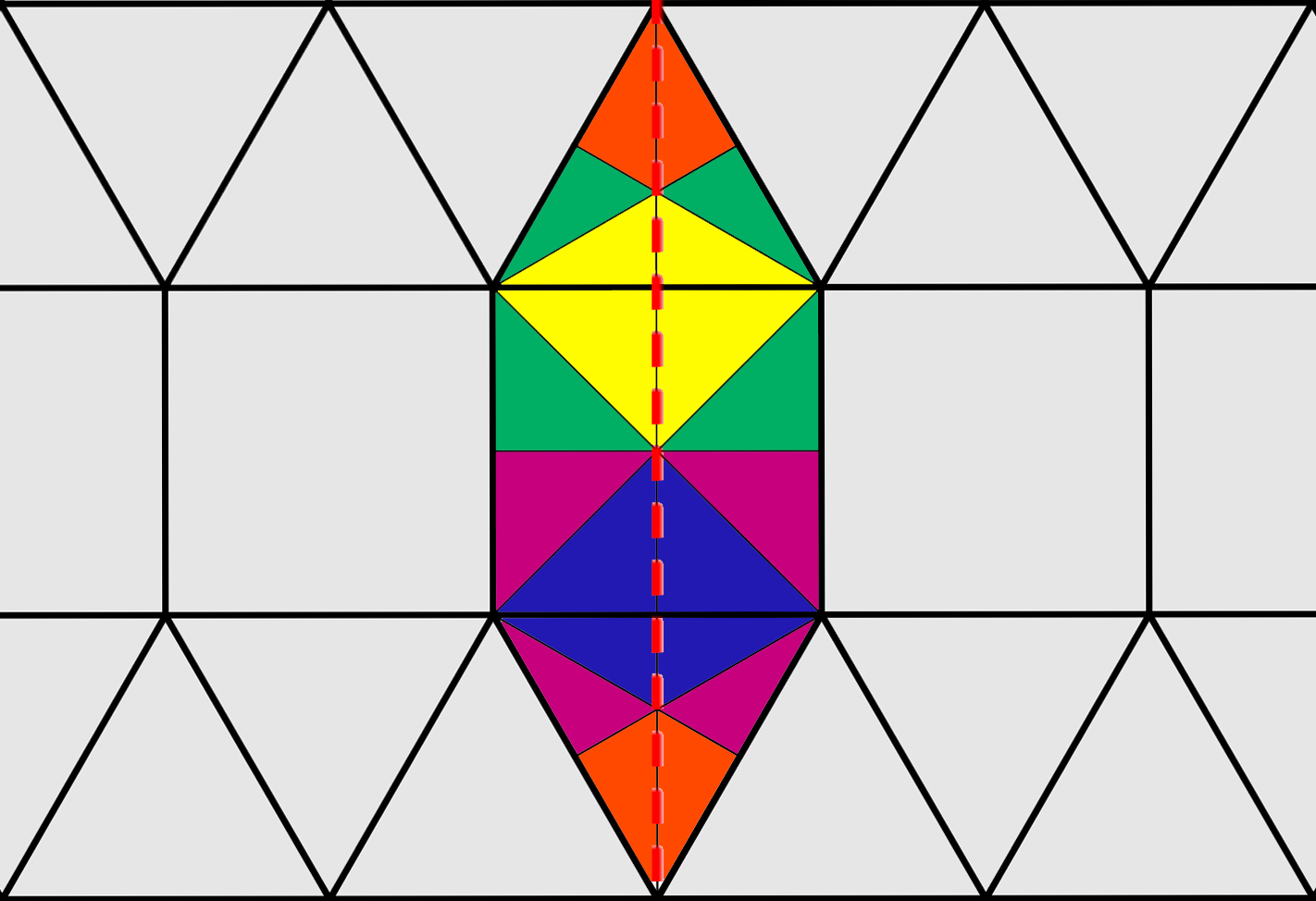}
\hfill
\caption{Reflection across the vertical line; 10 flag orbits in the fundamental region.}
\label{sv}
\hfill
\includegraphics[width=40mm]{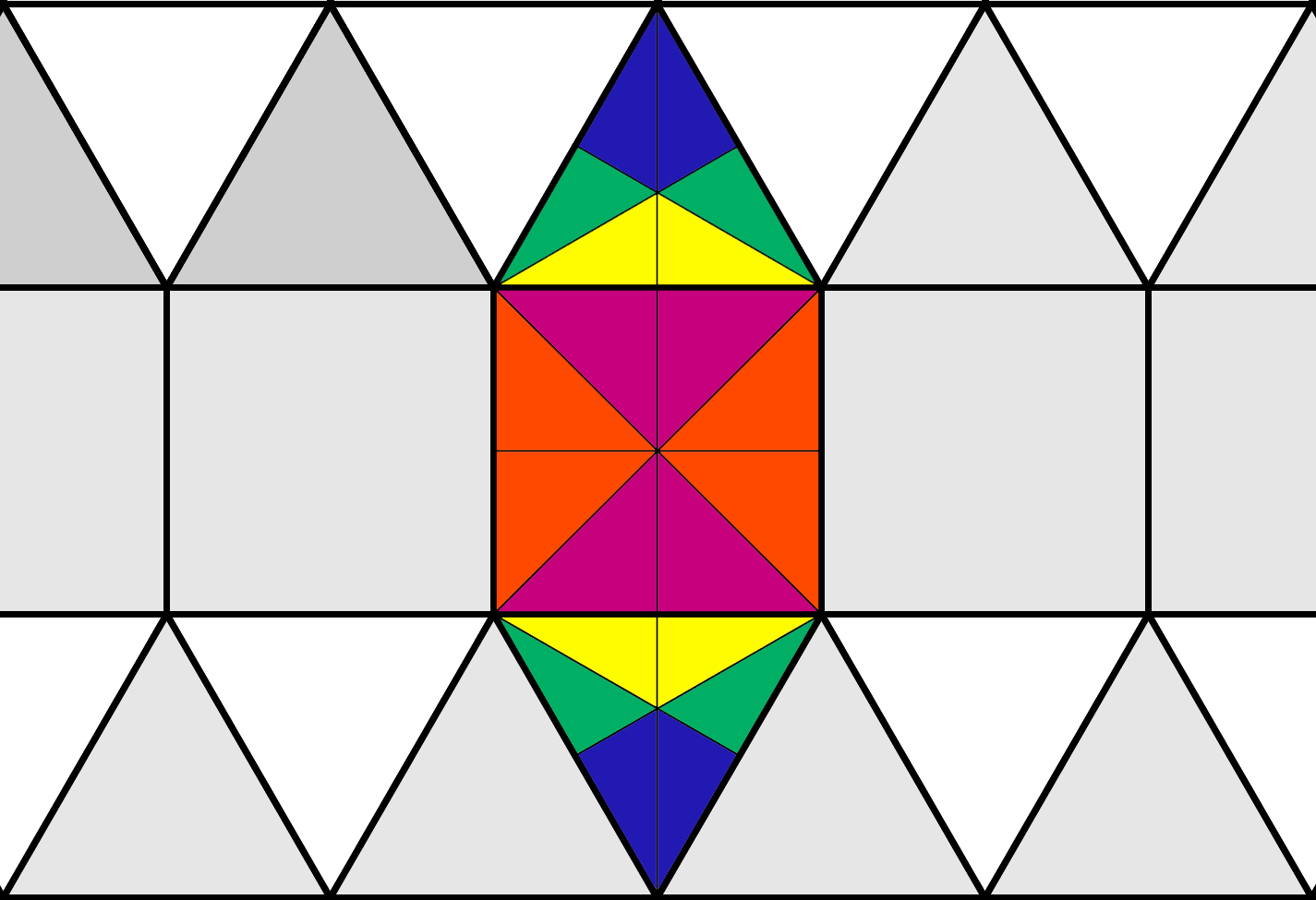}
\hfill
\caption{The minimal number of flag orbits in the fundamental region.}
\label{sv}
\end{multicols}
\end{figure}

Let us find all possible subgroups $G < T_\tau$ such that the listed symmetries preserve $G$ by conjugation. Suppose $G$ is generated by a pair of non-parallel vectors $\bold{a}, \bold{b} \in T_\tau$, and assume that $\bold{h} \circ \bold{u} \circ \bold{h}^{-1} \in G$ and $\bold{v} \circ \bold{u} \circ \bold{v}^{-1} \in G$ for every $\bold{u} \in G$.

Because the basis $(\bold{e_1}, \bold{e_2})$ of $\mathbb E^2$ was chosen in such a way that both $\bold{e_1}$ and $\bold{e_2}$ are the symmetries of $\tau$ which generate the group $T_\tau$ (i.e. $\left<\bold{e_1}, \bold{e_2}\right> = T_\tau$), there exist two pairs of integers $a_1, a_2$ and $b_1, b_2$ such that 
$$
\bold{a} = a_1 \bold{e_1} + a_2 \bold{e_2} = (a_1, a_2), \quad \bold{b} = b_1 \bold{e_1} + b_2 \bold{e_2} = (b_1, b_2).
$$
Note that if $2a_1 + a_2=2b_1 + b_2=0$, then $\bold{a}$ and $\bold{b}$ are parallel, which is impossible. Hence, without loss of generality we can assume $2a_1 + a_2 \not= 0$; this technical assumption will be used later in the proof. 

In order to understand the structure of the group $G$, observe that
\begin{equation}
\label{Eq:ConjugationOnBasis}
\begin{aligned}
&R_v(\bold{e_1}) := \bold{v} \circ \bold{e_1} \circ \bold{v}^{-1} = -\bold{e_1},\quad &R_v(\bold{e_2}) = \bold{e_2} - \bold{e_1},\\
&R_h(\bold{e_1}) := \bold{h} \circ \bold{e_1} \circ \bold{h}^{-1} = \bold{e_1},\quad &R_h(\bold{e_2}) = \bold{e_1} - \bold{e_2}.
\end{aligned}
\end{equation}
For an element $\bold{u} \in T_\tau$, the action of $R_v$ and $R_h$ on $\bold{u}$ is defined using (\ref{Eq:ConjugationOnBasis}) by linearity.

Since the reflection across a vertical line through the center of a square preserves $G$ by conjugation, the group $G$ must contain the vector $R_v (\bold{a}) = (-a_1 - a_2, a_2)$, as it is easy to compute from (\ref{Eq:ConjugationOnBasis}). Hence $G$ contains the vector $\bold{a} - R_v (\bold{a}) = (2a_1+a_2,0)$, which is not zero since $2a_1 + a_2 \neq 0$ by our assumption. Therefore, $G$ contains a proper non-trivial subgroup $G'$ of vectors with vanishing second coordinate. Pick $\bold{c} = (c_1, 0)$ with $c_1 > 0$ to be a generator of $G'$. Observe that, in fact, $\bold{c}$ is the shortest vector among all vectors in $G$ with positive first coordinate and vanishing second coordinate. 

Let $\bold{d} \in G$ be a vector such that  $G=\left<\bold{c}, \bold{d}\right>$. Moreover, since $\left<\bold{c}, \bold{d}\right> =\left<\bold{c}, \bold{d} +k \bold{c}\right>$ for every $k \in  \mathbb Z$, by picking an appropriate $k$ we may assume that $\bold{d}$ is chosen in such a way that in coordinates $\bold{d} = (d_1, d_2)$ we have 
\begin{equation}
\label{Eq:CondOnd_1}
d_1 \in \left[-\frac{d_2}{2}, c_1 - \frac{d_2}{2}\right).
\end{equation} 

Now we use the second symmetry from our list: since the conjugation by a reflection across a horizontal line through the center of a square preserves $G$, the vectors $(2d_1 + d_2, 0)$ and $(2d_1 + d_2 - c_1, 0)$ must be in $G'$. Indeed, (\ref{Eq:ConjugationOnBasis}) and linearity of $R_h$ gives us $R_h\left(\bold{d}\right) = (d_1+d_2, -d_2)$, and this vector must be in $G$. Hence, $R_h\left(\bold{d}\right) + \bold{d} = (2d_1+d_2, 0) \in G' < G$. Similarly, $R_h\left(\bold{d}\right) + \bold{d} - \bold{c} = (2d_1 +d_2 - c_1, 0) \in G'< G$. 

Recall that $\bold{c} = (c_1,0)$ was the shortest vector in $G'$ among all vectors with positive first coordinate. Therefore, as $(2d_1 +d_2, 0) \in G'$ we must have either $2d_1 + d_2= 0$, or $2 d_1 + d_2 \geqslant c_1$ (note that $2d_1 + d_2$ is necessarily non-negative by assumption (\ref{Eq:CondOnd_1})). In the latter case it then follows from (\ref{Eq:CondOnd_1}) that $0 \leqslant 2d_1 + d_2 - c_1 < c_1$, which is possible, again by minimality of $c_1$, only if $2d_1 + d_2 - c_1 = 0$. 

Therefore, either $d_2 = -2d_1$, or $d_2 = -2 d_1 + c_1$, and hence setting $c:=c_1$, $d:=-d$ we obtain that the group $G$ must be of one of the types in (\ref{Eq:TwoTypes}). One implication in Theorem~\ref{Thm:AlmostRegular33344} is proven.

Conversely, it is straightforward to see that a group of one of the types in (\ref{Eq:TwoTypes}) is preserved by conjugation with both $\bold{h}$ and $\bold{v}$, and hence the symmetry group $\Sym(\tau/G)$ contains both $\bold{h}$ and $\bold{v}$, which implies that $\tau / G$ has only five flag orbits, and thus is an almost regular Archimedean map.   
\end{proof}

\section{Minimal covers of Archimedean toroidal maps}
\label{Sec:MinCov}

In this section we will prove the Main Theorem. This will be done by combination of three statements -- Theorem~\ref{Thm:Areg}, \ref{Thm:Arot} and \ref{Thm:33344}. Prior the proofs, in the following subsection we recall some of the facts about the Gaussian and the Eisenstein integers -- a number-theoretic tool that provides a natural `language' for our main results.

\subsection{Gaussian and Eisenstein integers.}
\label{Subsec:Gauss}

The Gaussian and Eisenstein integers provide an essential ingredient for understanding the Archimedean toroidal maps.  The plots of these domains in the complex plane are the vertex sets of regular tessellations: a tessellation of type $\{4,4\}$ for the Gaussian integers and $\{3,6\}$ for the Eisenstein integers. We will follow \cite{DMEquivelar} and \cite{ Ireland} in our notation.

The \textit{Gaussian integers} $\mathbb Z[i] $ are defined as the set $\left\{a+bi \colon a, b \in \mathbb Z\right\} \subset \mathbb C$, where $i$ is the imaginary unit, with the standard addition and multiplication of complex numbers. Similarly, the \textit{Eisenstein integers} $\mathbb Z[\omega]$ are defined as $\left\{a+b\omega \colon a, b \in \mathbb Z\right\}\subset \mathbb C$, where $\omega := {(1 + i\sqrt{3})}/{2}$. We adopt a unifying notation $\mathbb Z[\sigma]$ with $\sigma \in\{i, \omega\}$ to denote either of these two sets.  

As in~\cite{DMEquivelar}, in this paper we use the following notation:  given $\alpha = a + b\sigma \in \mathbb Z[\sigma]$, we call its \textit{conjugate} the number $\overline\alpha := a + b \overline \sigma$, where $\overline \sigma$ is the conjugate complex number to $\sigma \in \mathbb{C}$.  Also we call $\R \alpha := a$ and $\I \alpha := b$ the \textit{real} and \textit{imaginary} parts, respectively. Note here that if $\sigma = i$, then this is the traditional notion of  `real part' and `imaginary part' of a complex number.  However, if $\sigma = \omega$, then the `traditional' real and imaginary parts of $a+b\omega$ are $a+b/2$ and $b\sqrt{3}/2$, respectively. 

For every $\alpha = a+b\sigma \in \mathbb Z[\sigma]$, we assign the \textit{norm} $N(\alpha) := \alpha \overline\alpha$. An integer $\alpha \in \mathbb Z[\sigma] \backslash \{0\}$ \textit{divides} $\beta \in \mathbb Z[\sigma]$ if and only if there is $\gamma \in \mathbb Z[\sigma]$ such that $\beta = \alpha\gamma$. Recall that, in the ring of Gaussian integers, the \emph{units} are only $\pm 1, \pm i$, while in the ring of Eisenstein integers the units are only $\pm 1, \pm \omega, \pm \overline \omega$. Two integers $\alpha, \beta \in \mathbb Z[\sigma]$ are called \textit{associated} if $\alpha = \beta\varepsilon$ for some unit $\varepsilon$. 

Let us recall the concept of a greatest common divisor for rings of integers. Am integers $\gamma \in \mathbb Z[\sigma]$ is a \textit{greatest common divisor (GCD)} of $\alpha, \beta \in \mathbb Z[\sigma]$ with $N(\alpha) + N(\beta) \neq 0$, if $\gamma$ divides both $\alpha$ and $\beta$ and for every $\gamma'$ with the same property it follows that $\gamma'$ divides $\gamma$. It is well-known that both $\mathbb Z[i]$ and $\mathbb Z[\omega]$ are Unique Factorization Domains (see \cite{Ireland}), that is the rings with unique (up to associates) factorization into primes. Hence, a greatest common divisor is well-defined, again up to associates. Because of that, we write $\gamma = \GCD(\alpha,\beta)$ implying that $\gamma$ is defined up to multiplication by an associate. We also agree that if there is a rational integer $n$ among associates to $\GCD(\alpha,\beta)$, then we specifically take $\GCD(\alpha,\beta) := |n|$. For example, $\GCD(3,6i) \in \{3,-3,3i,-3i\}$, and thus by our convention $\GCD(3,6i) = 3$.

The power of Gaussian and Eisenstein integers is coming from the natural identification of these sets with the vertex set of a regular tessellation $\{4,4\}$ or $\{3,6\}$. In particular, we can identify the basis $(\bold{e_1}, \bold{e_2})$ (see Section~\ref{Sec:Prelim}) with the ordered pair $(1, \sigma)$ from $\mathbb Z[\sigma]$. This identification leads to the group homomorphism of $T_{\{4,4\}}$ resp.\, $T_{\{3,6\}}$ with $\mathbb Z[i]$ resp.\, $\mathbb Z[\omega]$ (where the latter groups are regarded as Abelian groups with respect to addition). From this point of view, we will identify every two vectors $\bold a = (a_1, a_2)$, $\bold b = (b_1, b_2)$ with the complex numbers $\alpha =a_1 +  a_2\sigma$ and $\beta = b_1 +  b_2\sigma$.  Finally, write $\tau_{\alpha,\beta} := \tau_{\bold a, \bold b}$, and, for further brevity, $\tau_\eta := \tau_{\eta,\sigma\eta}$. 

Note here that the pair of vectors $\eta$, $\sigma \eta$ span a square if $\sigma = i$, or a rhombus with angle $\pi/3$ if $\sigma = \omega$. Therefore, if $\eta$ and $\eta'$ are equal to to associates, then $\tau_\eta = \tau_{\eta'}$.

\subsection{Proof of the Main Theorem.}

In Theorems \ref{Thm:Areg}~-- \ref{Thm:33344} we will prove that each Archimedean map on the torus has a unique minimal almost regular cover on the torus, which we will construct explicitly. To accomplish these proofs, we will use some known results for equivelar toroidal maps (see~\cite{DMEquivelar}).

\begin{proposition}[Covering correspondence of maps and their associates] 
\label{Prop:CoveringCorresp}
Let $\tau$ be an Archimedean tessellation of the plane, not of type $(3^4.6)$, and $G$ and $H$ two subgroups of $T_\tau$.  Then $\tau/H$ covers $\tau/G$ if and only if $\tau^*/H$ covers $\tau^*/G$.

\end{proposition}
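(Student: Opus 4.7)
My plan is to reduce both sides of the biconditional to a single group-theoretic condition, $H \leq G$, by exploiting the coincidence of translation lattices $T_\tau = T_{\tau^*}$.

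First I would observe that, by the choice of basis described in Section~\ref{Sec:Prelim}, the maximal translation group $T_\tau$ equals $\langle \bold{e_1}, \bold{e_2}\rangle$, and these same vectors by definition generate the vertex lattice of the associated regular tessellation $\tau^*$. Since $\tau^*$ is of type $\{4,4\}$ or $\{3,6\}$, its maximal translation lattice coincides with its vertex lattice, so $T_{\tau^*} = \langle \bold{e_1}, \bold{e_2}\rangle = T_\tau$. In particular, both $H$ and $G$ are subgroups of $T_{\tau^*}$, and the quotients $\tau^*/H$ and $\tau^*/G$ are well-defined toroidal maps of the type of $\tau^*$.

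Next I would invoke the characterization of coverings from Section~\ref{Sec:Prelim}: for Archimedean toroidal maps written as quotients $\tau/H$ and $\tau/G$ by translation subgroups, the covering $\tau/H \searrow \tau/G$ exists precisely when $H \leq G$. The forward implication is immediate from the definition, since the canonical quotient $\tau/H \to \tau/G$ is itself the required covering whenever $H \leq G$. For the converse, any combinatorial covering of toroidal Archimedean maps lifts to a symmetry of the universal cover $\tau$; after composing with a suitable translation to fix the basepoint, this symmetry conjugates $H$ into $G$, and since both are translation subgroups of the abelian group $T_\tau$, the conjugation is trivial and we directly obtain $H \leq G$. Applying the same reasoning to $\tau^*$ with the same subgroups yields
\begin{equation*}
\tau/H \searrow \tau/G \iff H \leq G \iff \tau^*/H \searrow \tau^*/G,
\end{equation*}
which is the statement of the proposition.

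The main subtlety, and the reason the hypothesis excludes $(3^4.6)$, is the equality $T_\tau = T_{\tau^*}$. For the snub tessellation $(3^4.6)$, the maximal translation lattice is strictly contained in that of its associated regular tessellation $\{3,6\}$, so a subgroup $H \leq T_\tau$ plays a genuinely different role as a subgroup of $T_{\tau^*}$, and the straightforward equivalence above breaks down. In all remaining Archimedean cases (including $(3^3.4^2)$, once its basis is specified), the normalization $T_\tau = T_{\tau^*}$ holds and the reduction to $H \leq G$ is valid.
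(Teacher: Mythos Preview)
Your core argument is correct and matches the paper's: both reduce each covering relation to the single condition $H \leq G$, via the identification $T_\tau = T_{\tau^*}$ (the paper says the two groups are ``isomorphic by construction''; you more precisely observe they are literally equal as $\langle {\bf e_1},{\bf e_2}\rangle$). Your version is more carefully written than the paper's few-line proof, which simply takes the equivalence ``covering $\iff H\leq G$'' as built into the definition in Section~\ref{Sec:Prelim}.

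Your closing paragraph, however, misidentifies the reason for excluding $(3^4.6)$. By the setup in Section~\ref{Sec:Prelim}, the basis $({\bf e_1},{\bf e_2})$ for $(3^4.6)$ consists of its shortest translational symmetries, and $\tau^*$ is \emph{defined} so that its vertex lattice---hence its translation lattice---is exactly $\langle {\bf e_1},{\bf e_2}\rangle$; thus $T_{(3^4.6)} = T_{\tau^*}$, not a proper sublattice. Your argument therefore goes through for $(3^4.6)$ just as well, and the paper neither justifies nor really uses the exclusion (the proof of Theorem~\ref{Thm:Arot} invokes the same reasoning via ``similar to the proof of Theorem~\ref{Thm:Areg}''). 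Likewise, your parenthetical about $(3^3.4^2)$ is misplaced: in the paper $\tau^*$ is defined only for $\tau \in \mathcal A \setminus \{(3^3.4^2)\}$, so the proposition simply does not address that type. These are peripheral remarks; the proof itself stands.
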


\begin{proof}
Let $\calN= \tau / H $ and $\calM=  \tau / G$ be maps on the torus, where $H$ is a subgroup of $G$, and $G$ is a subgroup of $T_\tau$.  By definition, thus $\mathcal N \searrow \mathcal M$.  By construction the groups $T_\tau$ and $T_{\tau^*}$ are isomorphic, and so the same subgroup structure holds in $T_{\tau^*}$, which means that $\tau^*/H$ covers $\tau^*/G$. Converse is similar.
\end{proof}


Observe that for every $\tau \in \mathcal A \setminus \{(3^3.4^2)\}$ we have a well-defined basis $\left(\bold{e_1}, \bold{e_2}\right)$ such that $\left<\bold{e_1}, \bold{e_2}\right> = T_\tau = T_{\tau^*}$, where $\tau^* = \{p, q\}$ is the regular tessellation associated to $\tau$. Therefore, every $\bold{u} \in T_\tau$ with the coordinates $(u_1, u_2)$ in the basis $(\bold{e_1}, \bold{e_2})$ can be identified by the homomorphism discussed in Subsection~\ref{Subsec:Gauss} with the integer $u_1 + u_2 \sigma \in \mathbb Z[\sigma]$ (here $\sigma$ depends on $\tau^*$). We will use this equivalent language instead of the vector language in order to state the following two theorems which are the first two main results of the paper.

\begin{theorem}[Minimal almost regular covers for toroidal maps of type $\mathcal A_{\text{reg}}$]
\label{Thm:Areg}
Let $ \tau_{\alpha, \beta}$ be an Archimedean map given as a quotient of an Archimedean tessellation $\tau \in \mathcal A_{\text{reg}}$ by a translation subgroup $\left<\alpha, \beta\right> < T_\tau$ generated by two integers $\alpha, \beta \in \mathbb{Z}[\sigma] \setminus \{0\}$ with $\alpha/ \beta \not\in \mathbb Z$. Then the map $\tau_\eta$ with 
\begin{equation*}
\eta=\left\{
\begin{aligned}
&\frac{\I \left(\overline{\alpha}\beta\right) }{N(1+\sigma)\,c} \, (1+\sigma), \text{ if }N(1 + \sigma) \text{ divides } \frac{\R \alpha}{c} - \frac{\I \alpha}{c} \text{ and } \frac{\R \beta}{c} - \frac{\I \beta}{c},\\
&\frac{\I (\overline{\alpha}\beta)}{c} , \text{ otherwise},
\end{aligned}
\right.
\end{equation*}
where $c = \GCD\left(\R \alpha, \I \alpha, \R \beta, \I \beta\right)$ is a unique minimal almost regular cover of $\tau_{\alpha, \beta}$. Moreover, the number $K_{\min}$ of fundamentals regions of $\tau_{\alpha, \beta}$ glued together in order to the fundamental region of $\tau_\eta$ is equal to
\begin{equation*}
K_{\min}=\left\{
\begin{aligned}
&\frac{\left|\I \left(\overline{\alpha}\beta\right)\right|}{N(1+\sigma)\, c^2}, \text{ if }N(1 + \sigma) \text{ divides } \frac{\R \alpha}{c} - \frac{\I \alpha}{c} \text{ and } \frac{\R \beta}{c} - \frac{\I \beta}{c},\\
&\frac{\left|\I (\overline{\alpha}\beta)\right|}{c^2} , \text{ otherwise}.
\end{aligned}
\right.
\end{equation*}
\end{theorem}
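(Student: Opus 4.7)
The plan is to reduce the construction to the corresponding fact about minimal regular covers of equivelar toroidal maps proven in \cite{DMEquivelar}, and then to read off the sheet number $K_{\min}$ from a short index calculation in $\mathbb Z[\sigma]$.

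The reduction step uses Theorem~\ref{Thm:RegAlmostRegular} together with Proposition~\ref{Prop:CoveringCorresp}: the former identifies almost regular Archimedean maps $\tau/G$, for $\tau \in \mathcal A_{\text{reg}}$, with regular equivelar maps $\tau^*/G$ defined by the same translation subgroup $G \leq T_\tau = T_{\tau^*}$, and the latter shows that this assignment preserves the covering partial order. Hence the poset of almost regular covers of $\tau_{\alpha,\beta}$ is order-isomorphic to the poset of regular covers of $\tau^*_{\alpha,\beta}$, so the existence, uniqueness, and the underlying subgroup $\langle \eta,\sigma\eta\rangle$ of a minimal almost regular cover $\tau_\eta$ of $\tau_{\alpha,\beta}$ can be transferred directly from those of a minimal regular cover $\tau^*_\eta$ of $\tau^*_{\alpha,\beta}$.

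The equivelar statement is the content of \cite{DMEquivelar}. Together with the classification of regular $\{4,4\}$- and $\{3,6\}$-maps recalled in Subsection~\ref{Subsec:RegChiral} -- these are exactly the $\tau^*_\eta$ for $\eta \in \mathbb Z_{>0}$ (the ``straight'' families) or $\eta \in (1+\sigma)\mathbb Z_{>0}$ (the ``diagonal'' families) -- it says that the smallest-norm $\eta$ of one of these two forms with $\langle \eta, \sigma\eta\rangle \leq \langle \alpha, \beta\rangle$ is the ``diagonal'' $\eta = \I(\overline\alpha\beta)(1+\sigma)/(N(1+\sigma)c)$ when available, and the ``straight'' $\eta = \I(\overline\alpha\beta)/c$ otherwise. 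Availability of the diagonal choice amounts to divisibility of both $\alpha/c$ and $\beta/c$ by $1+\sigma$ in $\mathbb Z[\sigma]$; because $1+\sigma$ is (up to associates) the unique prime of norm $N(1+\sigma)$, a short calculation writing $(1+\sigma)(a+b\sigma) = \gamma_1 + \gamma_2 \sigma$ shows that this divisibility rewrites as the arithmetic test $N(1+\sigma) \mid \gamma_1 - \gamma_2$ for $\gamma = \gamma_1+\gamma_2\sigma$ -- exactly the condition displayed in the statement, applied to $\gamma = \alpha/c$ and $\gamma = \beta/c$.

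For the sheet number, note that $[T_\tau : \langle \eta, \sigma\eta\rangle] = N(\eta)$ (the matrix of $(\eta, \sigma\eta)$ in the basis $(1,\sigma)$ has determinant $N(\eta)$, as one checks separately for $\sigma = i$ and $\sigma = \omega$), and $[T_\tau : \langle \alpha,\beta\rangle] = |\I(\overline\alpha\beta)|$, with the latter obtained from a direct expansion of $\overline\alpha\beta$ using $\sigma\overline\sigma = 1$ and $\sigma+\overline\sigma \in \mathbb Z$ to get $\I(\overline\alpha\beta) = a_1 b_2 - a_2 b_1$ for $\alpha = a_1+a_2\sigma$, $\beta = b_1+b_2\sigma$. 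Dividing then yields
\[
K_{\min} = [\langle \alpha,\beta\rangle : \langle \eta,\sigma\eta\rangle] = \frac{N(\eta)}{|\I(\overline\alpha\beta)|},
\]
and substituting the two candidate values of $\eta$ -- using $N(a(1+\sigma)) = a^2 N(1+\sigma)$ in the first case -- gives the two stated expressions. The main obstacle is the minimality claim in the second step, which is the number-theoretic core of the argument and where unique factorization in $\mathbb Z[\sigma]$ and the special role of the ramified prime $1+\sigma$ are essential; by contrast, the reduction in the first step and the index computation in the third step are essentially formal once the equivelar case is available.
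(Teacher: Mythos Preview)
Your proposal is correct and follows essentially the same approach as the paper: both reduce to the equivelar result in \cite{DMEquivelar} via Theorem~\ref{Thm:RegAlmostRegular} and Proposition~\ref{Prop:CoveringCorresp}, and then import the explicit formula for the minimal regular cover verbatim. Your write-up adds a bit more detail than the paper --- explaining why the condition $N(1+\sigma)\mid (\R\gamma - \I\gamma)$ is equivalent to $(1+\sigma)\mid \gamma$ in $\mathbb Z[\sigma]$, and carrying out the index computation $K_{\min}=N(\eta)/|\I(\overline\alpha\beta)|$ explicitly --- whereas the paper simply cites \cite[Theorem~3.6]{DMEquivelar} for both the formula and the value of $K_{\min}$; but the logical structure is identical.
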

\begin{proof}
This theorem is a direct consequence of \cite[Theorem 3.6]{DMEquivelar}. Indeed, let $\tau^*$ be the regular tessellation associated to the Archimedean tessellation $\tau$. By Theorem~\ref{Thm:RegAlmostRegular}, there is one-to-one correspondence between the translation subgroups of $T_{\tau^*}$ and $T_\tau$ that generate regular resp.\,\,almost regular maps on the torus. By Proposition~\ref{Prop:CoveringCorresp}, any such correspondence preserves the covering order and, it particular, sends minimal elements to minimal elements. Therefore, $\tau^*/H$ is the minimal regular cover of $\tau^*/G$, where $G := \left<\alpha, \beta\right>$, if and only if $\tau / H$ is the minimal almost regular cover of $\tau / G$. By \cite[Theorem 3.6]{DMEquivelar}, every map $\tau^*/G$ has a unique minimal regular cover $\tau^*/H$, where $H < T_{\tau^*}$ can be given explicitly in terms of number-theoretical properties of $\alpha, \beta \in \mathbb Z[\sigma]$. Hence, the same holds for $\tau/G$, which yields existence and uniqueness of a minimal almost regular cover. The explicit form of $H$ from \cite[Theorem 3.6]{DMEquivelar} translates verbatim into the explicit form given in Theorem~\ref{Thm:Areg}; this finishes the proof.  
\end{proof}

\begin{theorem}[Minimal almost regular covers for toroidal maps of type $(3^4.6)$]
\label{Thm:Arot}
Let $ \tau_{\alpha, \beta}$ be an Archimedean map given as a quotient of an Archimedean tessellation $\tau = (3^4.6)$ by a translation subgroup $\left<\alpha, \beta\right> < T_\tau$ generated by two integers $\alpha, \beta \in \mathbb{Z}[\omega] \setminus \{0\}$ with $\alpha/ \beta \not\in \mathbb Z$. Then the map $\tau_\eta$ with 
$$
\eta = \frac {\I (\overline{\alpha}\beta)}{N(\gamma)}\gamma
$$
where $\gamma = \GCD(\alpha, \beta)$ is a unique minimal almost regular cover of $\tau_{\alpha, \beta}$. Moreover, the number $K_{\min}$ of fundamentals regions of $ \tau_{\alpha, \beta}$ glued together in order to obtain the fundamental region of $\tau_\eta$ is equal to
$$ 
K_{\min} = \frac{\left|\I (\overline{\alpha} \beta) \right|}{N(\gamma)}.
$$
\end{theorem}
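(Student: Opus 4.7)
The plan is to mirror the proof of Theorem~\ref{Thm:Areg} with ``rotary'' in place of ``regular'' throughout, and invoke the minimal rotary cover statement of~\cite{DMEquivelar} rather than the minimal regular cover statement. The first reduction is via Theorem~\ref{Thm:RotAlmostRegular}: the map $\tau_{\bold{u},\bold{v}}$ of type $(3^4.6)$ is almost regular if and only if the associated map $\tau^*_{\bold{u},\bold{v}}$ of type $\{3,6\}$ is rotary. The second reduction is the covering correspondence: although Proposition~\ref{Prop:CoveringCorresp} is formally stated excluding $(3^4.6)$, its proof uses only the identification $T_\tau = T_{\tau^*}$ (both generated by the basis $(\bold{e_1},\bold{e_2})$) and the equivalence between covering of torus quotients and containment of the corresponding translation subgroups, and both of these remain valid here. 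Combining, $\tau_\eta$ is the minimal almost regular cover of $\tau_{\alpha,\beta}$ if and only if $\tau^*_\eta$ is the minimal rotary cover of $\tau^*_{\alpha,\beta}$.

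Next I would invoke the explicit formula from \cite{DMEquivelar} for minimal rotary covers of $\{3,6\}$-type toroidal maps. Under the identification of $T_{\tau^*}$ with $\mathbb Z[\omega]$ from Subsection~\ref{Subsec:Gauss}, rotary toroidal maps of type $\{3,6\}$ are precisely those whose translation subgroup has the form $\langle \eta, \omega\eta\rangle = \eta\mathbb Z[\omega]$, i.e.\ a principal ideal of $\mathbb Z[\omega]$. Hence the problem reduces to determining the largest principal ideal $\eta\mathbb Z[\omega]$ contained in the sublattice $\langle \alpha,\beta\rangle$. Writing $\gamma = \GCD(\alpha,\beta)$ and $\alpha = \gamma\alpha_1$, $\beta = \gamma\beta_1$ with $\GCD(\alpha_1,\beta_1) = 1$, the number-theoretic analysis of \cite{DMEquivelar} produces (up to associates) the generator $\eta = \I(\overline{\alpha_1}\beta_1)\gamma$; since $\overline{\alpha}\beta = N(\gamma)\,\overline{\alpha_1}\beta_1$, we have $\I(\overline{\alpha}\beta) = N(\gamma)\,\I(\overline{\alpha_1}\beta_1)$, and this rearranges to $\eta = \I(\overline{\alpha}\beta)\gamma/N(\gamma)$, as claimed.

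For the multiplicity, $K_{\min} = [\langle \alpha,\beta\rangle : \langle \eta,\omega\eta\rangle]$ is the ratio of the covolumes of the two sublattices in $\mathbb E^2$. A direct calculation in Eisenstein coordinates gives covolume $\tfrac{\sqrt 3}{2}|\I(\overline{\alpha}\beta)|$ for $\langle\alpha,\beta\rangle$ and covolume $\tfrac{\sqrt 3}{2}N(\eta)$ for $\eta\mathbb Z[\omega]$, so
\[
K_{\min} = \frac{N(\eta)}{|\I(\overline{\alpha}\beta)|} = \frac{|\I(\overline{\alpha}\beta)|}{N(\gamma)},
\]
matching the claimed value. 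The main point that requires care is ensuring that one invokes specifically the minimal \emph{rotary} cover statement of \cite{DMEquivelar}, and not the minimal regular cover result used in the proof of Theorem~\ref{Thm:Areg}: for type $\{3,6\}$ the rotary class is strictly larger than the regular class (it includes the chiral maps of type $\{3,6\}_{b,c}$), and this strict enlargement is precisely why $(3^4.6)$ has to be treated with its own theorem rather than being absorbed into Theorem~\ref{Thm:Areg}.
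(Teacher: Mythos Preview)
Your argument is correct and follows exactly the paper's approach: the paper's proof is a one-liner instructing the reader to mimic the proof of Theorem~\ref{Thm:Areg} but with \cite[Theorem~3.5]{DMEquivelar} (the minimal \emph{rotary} cover) in place of \cite[Theorem~3.6]{DMEquivelar} (the minimal \emph{regular} cover), which is precisely the substitution you make. Your additional elaboration---the explicit reduction to principal ideals of $\mathbb Z[\omega]$, the derivation of $\eta$ via $\gamma=\GCD(\alpha,\beta)$, and the covolume computation for $K_{\min}$---simply fleshes out what the paper leaves to the cited reference.
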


\begin{proof}
The proof is similar to the proof of Theorem~\ref{Thm:Areg}, where instead of \cite[Theorem 3.6]{DMEquivelar} we use \cite[Theorem 3.5]{DMEquivelar}.
\end{proof}

Recall that we associate to the tessellation of type $(3^3.4^2)$ the basis $(\bold{e_1}, \bold{e_2})$ comprised of a pair of shortest non-parallel translations that are in the symmetry group of $(3^3.4^2)$. Everywhere below we assume that the coordinate representation of a translation from  $T_{(3^3.4^2)}$ is given with respect to the basis $(\bold{e_1}, \bold{e_2})$.     

\begin{theorem}[Minimal almost regular covers for toroidal maps of type $(3^3.4^2)$] 
\label{Thm:33344}
Let $\tau_{\bold{a}, \bold{b}}$ be an Archimedean toroidal map given as a quotient of the tessellation $\tau=(3^3.4^2)$ by a translation subgroup $\left<\bold{a}, \bold{b}\right> < T_\tau$ generated by two vectors $\bold{a} = (a_1, a_2)$ and $\bold{b} = (b_1,b_2)$ with $\Delta:= a_1 b_2 - a_2 b_1 \neq 0$. Then for $\tau_{\bold{a}, \bold{b}}$ there exists and is unique a minimal almost regular cover $\tau_{\bold{u}, \bold{v}}$ generated by the subgroup
\begin{equation*}
\left<\bold{u},\bold{v}\right>=\left\{
\begin{aligned}
&\big<(c, 0), (-d_1, c + 2d_1)\big>, \text{ provided } \frac{a_2}{g_1}-\frac{2a_1+a_2}{g_2} \text{ and } \frac{b_2}{g_1}-\frac{2b_1+b_2}{g_2} \text{ are even}, \\
&\big<(c, 0), (-d_2, 2d_2)\big>, \text{ otherwise}, \\
\end{aligned}
\right.
\end{equation*}
where 
\begin{equation*}
\begin{aligned}
&g_1 = \GCD(a_2, b_2), \quad g_2 = \GCD(2a_1+a_2, 2b_1+b_2),\\
&c=\frac{\Delta}{g_1}, \quad d_1=-\frac{\Delta}{2}\left(\frac{1}{g_1} + \frac{1}{g_2}\right), \quad d_2 = -\frac{\Delta}{g_2}.
\end{aligned}
\end{equation*}
Moreover, the number $K_{\min}$ of fundamentals domains of $\tau_{\bold{a}, \bold{b}}$ glued together in order to obtain the fundamental region of $\tau_{\bold{u}, \bold{v}}$ is equal to
\begin{equation*}
K_{\min} = \left\{
\begin{aligned}
&\left|\frac{\Delta}{g_1 g_2}\right|, \text{ if } \frac{a_2}{g_1}-\frac{2a_1+a_2}{g_2} \text{ and } \frac{b_2}{g_1}-\frac{2b_1+b_2}{g_2} \text{ are even}, \\
&2\left|\frac{\Delta}{g_1 g_2}\right|, \text{ otherwise}. \\
\end{aligned}
\right.
\end{equation*}
\end{theorem}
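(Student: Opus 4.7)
The plan is to use Theorem~\ref{Thm:AlmostRegular33344} to restrict the candidate subgroups $\langle\bold{u},\bold{v}\rangle$ to the two explicit two-parameter families (\ref{Eq:TwoTypes}), and to pick the largest such subgroup contained in $G = \langle\bold{a},\bold{b}\rangle$: covers correspond to subgroup inclusions, so a smaller index $[G:H] = K_{\min}$ yields a smaller cover. Any almost regular $H$ has first generator $(c,0)$, and a direct analysis of the system $m\bold{a} + n\bold{b} = (k,0)$ shows that $(k,0) \in G$ if and only if $k \in (\Delta/g_1)\mathbb{Z}$ (using $g_1 \mid \Delta$); maximality of $H$ therefore forces $c = \Delta/g_1$.

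Once $c$ is fixed, the second generator $(p,q)$ of $H$ satisfies $2p + q \equiv 0 \pmod c$ in either type. Since the second coordinate of any element of $G$ lies in $g_1\mathbb{Z}$, write $q = g_1 t$; choosing Bezout coefficients $M, N$ with $M a_2' + N b_2' = 1$ (where $a_2' := a_2/g_1$, $b_2' := b_2/g_1$), the admissible $p$'s form the coset $tP + c\mathbb{Z}$ with $P := M a_1 + N b_1$. The constraint $2p + q \equiv 0 \pmod c$ reduces to the single congruence $tX \equiv 0 \pmod c$, where $X := 2P + g_1 = M A + N B$ and $A := 2a_1 + a_2$, $B := 2b_1 + b_2$. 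Hence the smallest positive $t$, which equals $K_{\min}$, is $c/\gcd(c, X)$, and the type of the resulting $H$ is Type I if $X/\gcd(c, X)$ is even and Type II if it is odd (this is $tX \bmod 2c$).

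The core number-theoretic step -- and the expected main obstacle -- is the identification $\gcd(c, X) = \gcd(c, g_2)$. Writing $X = g_2 Y$ with $Y := M A' + N B'$ and $A' := A/g_2$, $B' := B/g_2$, the identity $Ab_2 - Ba_2 = 2\Delta$ descends to $A'b_2' - B'a_2' = 2c/g_2$; combined with $\gcd(A',B') = \gcd(a_2',b_2') = 1$, this forces $\gcd(c/\gcd(c,g_2),\,Y) = 1$. Indeed, any prime $p$ dividing both would (after multiplying $Y = MA' + NB'$ by $a_2'$ and using $A'b_2' \equiv B'a_2' \pmod p$ and $Ma_2' + Nb_2' = 1$) satisfy $A' \equiv a_2' Y \pmod p$, and symmetrically $B' \equiv b_2' Y \pmod p$, so $p \mid \gcd(A',B') = 1$, a contradiction.

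Finally, $A'b_2' - B'a_2' = 2c/g_2$ gives $g_2 \mid 2c$, so $\gcd(c,g_2) \in \{g_2,\, g_2/2\}$, with equality $\gcd(c,g_2) = g_2$ precisely when $2c/g_2$ is even, i.e.\ $g_2 \mid c$, which via $\gcd(a_2',b_2')=1$ is equivalent to the stated parity condition $a_2/g_1 \equiv A/g_2$ and $b_2/g_1 \equiv B/g_2 \pmod 2$. In that case $Y \equiv M a_2' + N b_2' = 1 \pmod 2$ is odd, producing Type II and $K_{\min} = |\Delta|/(g_1 g_2)$; otherwise $X/\gcd(c,X) = 2Y$ is even, producing Type I and $K_{\min} = 2|\Delta|/(g_1 g_2)$. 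The explicit values of $d_1, d_2$ then follow by solving the corresponding linear congruence for the second generator (the Type II formula uses $c + 2d_1 = -\Delta/g_2$ together with $c = \Delta/g_1$). Uniqueness of the minimal almost regular cover is immediate from the construction, and the converse inclusion $\langle\bold{u},\bold{v}\rangle \leq G$ is a direct substitution check.
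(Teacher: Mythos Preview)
Your argument is correct and takes a genuinely different route from the paper's. The paper treats the two families in (\ref{Eq:TwoTypes}) separately: for each type it writes the inclusion $\langle\bold{u},\bold{v}\rangle \leq \langle\bold{a},\bold{b}\rangle$ as a pair of linear Diophantine systems, solves them completely to obtain explicit two-parameter families of covers ($G_{k,s}$ for the first type, $H_{l,s}$ and $F_{l,s}$ for the second, split according to whether the parity condition holds), and then compares the minimal elements of these families by hand. Your approach is more unified: you fix $c = \Delta/g_1$ at the outset, encode both types simultaneously through the single congruence $2p+q\equiv 0\pmod c$ (with the residue modulo $2c$ distinguishing the types), and reduce everything to the computation of $\gcd(c,X)$. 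The lemma $\gcd(c,X)=\gcd(c,g_2)$, proved via the identity $A'b_2'-B'a_2'=2c/g_2$, has no analogue in the paper and gives a structural explanation for why the parity condition appears; the paper's case-by-case enumeration, on the other hand, makes it more transparent that no almost regular cover has been overlooked.

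One step deserves an extra sentence: the claim ``maximality of $H$ therefore forces $c=\Delta/g_1$'' does not follow immediately from $(k,0)\in G\iff(\Delta/g_1)\mid k$, which only shows $(\Delta/g_1)\mid c$. To complete it, observe that if $H=\langle(c',0),(p,q)\rangle\leq G$ is almost regular with $c'$ a proper multiple of $\Delta/g_1$, then $H':=\langle(\Delta/g_1,0),(p,q)\rangle$ is still contained in $G$, still satisfies the (now weaker) congruence $2p+q\equiv 0\pmod{\Delta/g_1}$, hence is still almost regular, and strictly contains $H$. This also makes your uniqueness claim rigorous, since it shows the minimal index is achieved only at $c=\Delta/g_1$ and the minimal admissible $|t|$.
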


\begin{proof}

Our strategy in proving Theorem~\ref{Thm:33344} will be the following: we explicitly describe \emph{all} almost regular covers for $\tau_{\bold{a}, \bold{b}}$ and then determine the one which is the smallest (under the covering relation). 

Suppose that an Archimedean map $\tau_{\bold{u}, \bold{v}}$ is a cover of $\tau_{\bold{a}, \bold{b}}$. This is equivalent of saying that the group $\left<\bold{u}, \bold{v}\right>$ is a proper subgroup of $\left<\bold{a}, \bold{b}\right>$, which in algebraic terms is equivalent to existence of a linear integer relation between the generators of both groups:
\begin{equation}
\label{Eq:CoverRel}
\left\{
\begin{aligned}
&n_1 \bold{a} + m_1 \bold{b} = \bold{u},\\ 
&n_2 \bold{a} + m_2 \bold{b} = \bold{v},
\end{aligned}
\right.
\end{equation}
for some integers $n_1,n_2,m_1,m_2$ with $n_1m_2 \neq n_2m_1$. (The last condition guarantees that $\bold{u}$ and $\bold{v}$ are, in fact, non-parallel.) If, on top, $\tau_{\bold{u},\bold{v}}$ is almost regular, then the generators $\bold{u}, \bold{v}$ might be chosen to be of one of the types in (\ref{Eq:TwoTypes}) (see Theorem~\ref{Thm:AlmostRegular33344}). We now consider these two cases one by one.

\textsc{Case 1:} suppose $\bold{u} = c \bold{e_1}$ and $\bold{v} = -d\bold{e_1} + 2d \bold{e_2}$ for some non-zero integers $c$ and $d$. Then, in order to find the generators of such type, we must solve the following system of vector Diophantine equations
\begin{equation}
\label{Eq:Case1General}
\left\{
\begin{aligned}
&n_1 \bold{a} + m_1 \bold{b} = c \bold{e_1},\\ 
&n_2 \bold{a} + m_2 \bold{b} = -d\bold{e_1} + 2d \bold{e_2},
\end{aligned}
\right.
\end{equation}
for the variables $n_1, n_2, m_1, m_2$ treating $c$ and $d$ as parameters.

The first equation in (\ref{Eq:Case1General}) in coordinates is equivalent to the system of linear Diophantine equations
\begin{equation}
\label{Eq:Case1FirstEq}
\left\{
\begin{aligned}
&n_1 a_1+m_1 b_1=c, \\
&n_1 a_2+m_1b_2=0.
\end{aligned}
\right.
\end{equation}
By the standard methods the full family of solutions for (\ref{Eq:Case1FirstEq}) is
\begin{equation}
\label{Eq:Case1SolFirstEq}
n_1 = \frac{b_2}{g_1} k, \quad m_1 = - \frac{a_2}{g_1} k, \quad c = \frac{\Delta}{g_1} k, \quad k \in \mathbb Z^*,
\end{equation}
where we recall that $\Delta = a_1 b_2 - a_2 b_1$ and $g_1 = \GCD(a_2, b_2)$ (here $\mathbb Z^*$ stands for the set of all non-zero integers).

Similarly, the second equation in (\ref{Eq:Case1General}) in coordinates reads
\begin{equation}
\label{Eq:Case1SecondEq}
\left\{
\begin{aligned}
&n_2 a_1+m_2 b_1=-d, \\
&n_2 a_2+m_2b_2=2d;
\end{aligned}
\right.
\end{equation}
Multiplying the first equation by $2$ and adding the second we get 
$$
n_2(2a_1+a_2)+m_2(2b_1+b_2)=0,
$$
from which we conclude, after straightforward cancellations, that the full family of solutions for (\ref{Eq:Case1SecondEq}) is
\begin{equation}
\label{Eq:Case1SolSecondEq}
n_2= \frac{2b_1+b_2}{g_2} s, \quad m_2=-\frac{2a_1+a_2}{g_2} s,  \quad d = -\frac{\Delta}{g_2}s, \quad s \in \mathbb{Z}^*,
\end{equation}
where $g_2 = \GCD(2a_1+a_2,2b_1+b_2)$.

Concluding \textsc{Case 1} from obtained solution (\ref{Eq:Case1SolFirstEq}) and (\ref{Eq:Case1SolSecondEq}), we see that $\tau_{\bold{u}, \bold{v}}$ is an almost regular Archimedean cover of $\tau_{\bold{a}, \bold{b}}$ and is of the first type in (\ref{Eq:TwoTypes}) if and only if
$$
\left<\bold{u}, \bold{v}\right> = \left<\left(\frac{\Delta}{g_1}k, 0\right), \left(\frac{\Delta}{g_2} s, -2 \frac{\Delta}{g_2}s\right)\right> =: G_{k,s}.
$$ 
Finally, observe that for ever given pair of non-zero integers $k$ and $s$ the almost regular map $\tau / G_{k,s}$ covers $\tau / G_{1,1}$. Hence, $\tau / G_{1,1}$ is the minimal almost regular cover (for $\tau_{\bold{a}, \bold{b}}$) of the first type in (\ref{Eq:TwoTypes}). Note that the full family of toroidal maps $\tau / G_{k,s}$ does not form a totally ordered set with respect to covering; however, as we saw, the corresponding poset has a unique minimal element. We will see a similar type of covering behavior later.

Finishing \textsc{Case 1}, we compute the number of fundamental regions of $\left<\bold{a}, \bold{b}\right>$ one should glue together in order to obtain the fundamental region of $G_{1,1}$. This is done by comparing areas of those regions. In the standard basis in $\mathbb E^2$ the area of the fundamental region of $\left<\bold{a}, \bold{b}\right>$ is equal to 
$$
A_0 := \left|\bold{a} \times \bold{b}\right| = |\Delta| \cdot \left|\bold{e_1} \times \bold{e_2}\right|.
$$
Similarly, by using (\ref{Eq:Case1SolFirstEq}) and (\ref{Eq:Case1SolSecondEq}) we compute the area of the fundamental region of $G_{1,1}$:
$$
A_1 := \left|c \bold{e_1} \times (-d\bold{e_1} + 2d \bold{e_2})\right| = \frac{2 \Delta^2}{\left|g_1 g_2\right|} \cdot \left|\bold{e_1} \times \bold{e_2}\right|. 
$$
Therefore, the number we are looking for is equal to 
\begin{equation}
\label{Eq:Case1Kmin}
\frac{A_1}{A_0} = 2 \left|\frac{\Delta}{g_1 g_2}\right|.
\end{equation}

Observe that 
\begin{equation}
\label{Eq:KeyRelation}
2 (a_1 b_2 - a_2 b_1) = (2a_1 + a_2) b_1 - (2b_1 + b_2)a_2,
\end{equation}
and hence the right hand side in (\ref{Eq:Case1Kmin}) is an integer. Of course, the same conclusion likewise follows from the geometric meaning of $A_1 / A_0$.

\textsc{Case 2:} suppose $\bold{u} = c \bold{e_1}$ and $\bold{v} = -d\bold{e_1} + (c+2d) \bold{e_2}$ for some non-zero integers $c$ and $d$; this is the second type in (\ref{Eq:TwoTypes}). We proceed similarly to \textsc{Case 1}, with a bit more involved computation.

Again, in order to find all almost regular covers of the second type we have to find all solutions of the system
\begin{equation*}
\left\{
\begin{aligned}
&n_1\bold{a} + m_1\bold{b} = c \bold{e_1}, \\
&n_2\bold{a} + m_2\bold{b} = -d \bold{e_1} + (c+2d) \bold{e_2},
\end{aligned}
\right.
\end{equation*}
for integers $n_1,n_2, m_1, m_2$ treating $c$ and $d$ as parameters. Similarly to \textsc{Case 1}, the solutions to the first equation in this system has the following form:
\begin{equation}
\label{Eq:Case2SolFirstEq}
n_1 = \frac{b_2}{g_1} k, \quad m_1 = - \frac{a_2}{g_1} k, \quad c = \frac{\Delta}{g_1} k, \quad k \in \mathbb Z^*.
\end{equation}

The second equation from the system above in coordinates reads:
\begin{equation}
\label{Eq:Case2SecondEq}
\left\{
\begin{aligned}
&n_2 a_1+m_2 b_1=-d, \\
&n_2 a_2+m_2b_2=c+2d.
\end{aligned}
\right.
\end{equation}
Again, multiplying the first equation by $2$ and adding the second one we obtain, by using (\ref{Eq:Case2SolFirstEq}),
\begin{equation}
\label{Eq:AuxEq}
n_2(2a_1+a_2)+m_2(2b_1+b_2)=c=\frac{\Delta}{g_1}k.
\end{equation}

This is a linear nonhomogeneous Diophantine equation in $n_2$ and $m_2$. The standard theory of linear Diophantine equations tells us that the any solution of (\ref{Eq:AuxEq}) is the sum of a partial solution to the given equation and of the general solution of the corresponding homogeneous equation $n_2(2a_1+a_2)+m_2(2b_1+b_2) = 0$. Moreover, a necessary condition for (\ref{Eq:AuxEq}) to have a solution is that 
\begin{equation}
\label{Eq:NecCond}
g_2 \text{ divides } \frac{\Delta}{g_1} k.
\end{equation}

Using (\ref{Eq:KeyRelation}) it is straightforward to check that $\Delta / (g_1 g_2) \in \mathbb Z^*$ if and only if 
\begin{equation}
\label{Eq:NecCond2}
\frac{a_2}{g_1}-\frac{2a_1+a_2}{g_2} \text{ and } \frac{b_2}{g_1}-\frac{2b_1+b_2}{g_2} \text{ are even.}
\end{equation}

Therefore, if condition (\ref{Eq:NecCond2}) is met, then the necessary condition (\ref{Eq:NecCond}) is satisfied for every integer $k$. On the other hand, if (\ref{Eq:NecCond2}) is violated, then (\ref{Eq:NecCond}) can be only satisfied if $k$ is even (as it follows from (\ref{Eq:KeyRelation})). Let us consider these two sub-cases.

\textsc{Sub-case 1: }suppose that condition (\ref{Eq:NecCond2}) is violated; hence $k$ is necessarily even. Then it is straightforward to check that the pair of integers
$$
n_2' = \frac{b_2}{2g_1} k, \quad m_2' = - \frac{a_2}{2 g_1} k
$$  
provides a partial solution of equation (\ref{Eq:AuxEq}). Therefore, in \textsc{Sub-case 1} the full family of solutions of (\ref{Eq:AuxEq}), and thus of (\ref{Eq:Case2SecondEq}), has the form
\begin{equation}
\label{Eq:Case2Subcase1Sol}
n_2 = \frac{b_2}{2 g_1} k + \frac{2b_1 + b_2}{g_2} s, \quad m_2 = - \frac{a_2}{2 g_1} k - \frac{2a_1 + a_2}{g_2} s, \quad d = - \frac{\Delta}{2 g_1} k - \frac{\Delta}{g_2} s, \quad k \in 2 \mathbb Z, \quad s \in \mathbb Z.
\end{equation}

Concluding \textsc{Sub-case 1} of \textsc{Case 2} by combining (\ref{Eq:Case2SolFirstEq}) and (\ref{Eq:Case2Subcase1Sol}), we see that, provided condition (\ref{Eq:NecCond2}) is not met, $\tau_{\bold{u}, \bold{v}}$ is an almost regular Archimedean cover of $\tau_{\bold{a}, \bold{b}}$ and is of the second type in (\ref{Eq:TwoTypes}) if and only if
$$
\left<\bold{u}, \bold{v}\right> = \left<\left(2\frac{\Delta}{g_1}l, 0\right), \left(\frac{\Delta}{g_1} l + \frac{\Delta}{g_2} s, -2 \frac{\Delta}{g_2}s\right)\right> =: H_{l,s},
$$ 
where $l, s \in \mathbb Z^*$.

Now let us check the covering relations. Note that for a given pair of non-zero integers $l$ and $s$, the almost regular map $\tau / H_{l,s}$ non-trivially covers $\tau / G_{1, 1}$. Therefore, in the poset of coverings of almost regular maps of the form $\tau / G_{k,s}$ (see \textsc{Case 1}) and $\tau / H_{l,s}$ the quotient of $\tau$ by $G_{1, 1}$ is the unique minimal element. 

\textsc{Sub-case 2:} assume condition (\ref{Eq:NecCond2}) is satisfied. This is equivalent of saying that both pairs $a_1/g_1$, $(2 a_1 + a_2)/g_2$ and $b_2/g_1$, $(2b_1 + b_2)/g_2$ consists of integers with the same parity. As we shown above, this is also equivalent to $\Delta / (g_1 g_2) \in \mathbb Z^*$.

If $k$ is even, then we can run verbatim the same arguments as in \textsc{Sub-case 1}, with the same conclusion. Hence we can assume that $k$ is odd. 

But if $k$ is odd and condition (\ref{Eq:NecCond2}) is met, then the pair of numbers
$$
n_2' = \frac{b_2}{2g_1} k + \frac{2b_1 + b_2}{2g_2}, \quad m_2' = - \frac{a_2}{2 g_1} k - \frac{2a_1 + a_2}{2g_2}
$$  
are necessarily full integers, and moreover provides a partial solution to (\ref{Eq:AuxEq}). Therefore, we can write down a complete solution to (\ref{Eq:AuxEq}):
\begin{equation*}
\begin{aligned}
&n_2 = \frac{b_2}{2g_1} k + \frac{2b_1 + b_2}{2g_2} + \frac{2b_1 + b_2}{g_2}s = \frac{b_2}{2g_1} k + \frac{2b_1 + b_2}{2g_2} (2s+1), \\
&m_2 = - \frac{a_2}{2 g_1} k - \frac{2a_1 + a_2}{2g_2} - \frac{2a_1 + a_2}{2g_2}s = - \frac{a_2}{2 g_1} k - \frac{2a_1 + a_2}{2g_2} (2s+1),
\end{aligned}
\end{equation*}
where $s \in \mathbb Z$.

Substituting this solution in either of the equations in (\ref{Eq:Case2SecondEq}), we obtain
$$
d = - \frac{\Delta}{2g_1} k - \frac{\Delta}{2 g_2} (2s+1),
$$
where both $k$ and $2s+1$ are some odd integers.

Similarly to as we did before, concluding \textsc{Sub-case 2} of \textsc{Case 2}, we see that, provided condition (\ref{Eq:NecCond2}) is satisfied and $k$ is odd (the case $k$ was already discussed), $\tau_{\bold{u}, \bold{v}}$ is an almost regular Archimedean cover of $\tau_{\bold{a}, \bold{b}}$ and is of the second type in (\ref{Eq:TwoTypes}) if and only if
$$
\left<\bold{u}, \bold{v}\right> = \left<\left(\frac{\Delta}{g_1}(2l+1), 0\right), \left(\frac{\Delta}{2 g_1} (2l + 1) + \frac{\Delta}{2 g_2} (2s + 1), - \frac{\Delta}{g_2} (2s+1)\right)\right> =: F_{l,s},
$$ 
where $l, s \in \mathbb Z$.

Finally, comparing the groups
$$
G_{1,1} = \left<\left(\frac{\Delta}{g_1}, 0\right), \left(\frac{\Delta}{g_2} , -2 \frac{\Delta}{g_2}\right)\right> \text{ and }F_{0,0}= \left<\left(\frac{\Delta}{g_1}, 0\right), \left(\frac{\Delta}{2g_1} + \frac{\Delta}{2g_2}, - \frac{\Delta}{g_2}\right)\right>,
$$ 
we conclude that the almost regular map $\tau / G_{1,1}$ non-trivially covers the almost regular map $\tau/ F_{0,0}$. 

Therefore, summing up the results of \textsc{Case 1} and \textsc{Case 2}, we obtain that if condition (\ref{Eq:NecCond2}) is satisfied, then $\tau / F_{0,0}$ is a minimal almost regular Archimedean map that covers $\tau_{\bold{a}, \bold{b}}$. Otherwise, $\tau / G_{1,1}$ is a minimal almost regular cover. In both cases these minimal covers are unique elements in the corresponding posets of possible almost regular covers. This almost finishes the proof of Theorem~\ref{Thm:33344}. The only thing that is left to check is that in \textsc{Sub-case 2} of \textsc{Case 2}, provided $k$ is odd, the number of fundamental regions of $\left<\bold{a}, \bold{b}\right>$ one should glue together to obtain the fundamental region of $F_{0,0}$ is equal to 
$$
\left|\frac{\Delta}{g_1 g_2}\right|
$$
(note that this is an integer). This computation is similar to the one found at the end of \textsc{Case 1}, and is thus omitted; this completes the proof of Theorem~\ref{Thm:33344}.
\end{proof}


\end{document}